\newcommand{\BEA}{\begin{eqnarray}}
\newcommand{\EEA}{\end{eqnarray}}
\newcommand{\comment}[1]{}
\newtheorem{theorem}{Theorem}
\newtheorem{lemma}{Lemma}
\newtheorem{assumptions}{Assumption}
\newcommand*{\extendadd}{
  \mathbin{
    \mathpalette\extend@add{}
  }
}
\newcommand*{\extend@add}[2]{
  \ooalign{
    $\m@th#1\leftrightarrow$%
    \vphantom{$\m@th#1\updownarrow$}
    \cr
    \hfil$\m@th#1\updownarrow$\hfil
  }
}
\begin{document}

\title{Machine Learning for Prediction with Missing Dynamics}
\author{John Harlim \\ Department of Mathematics\\ Department of Meteorology and Atmospheric Science\\ Institute for Computational and Data Sciences \\ The Pennsylvania State University, PA 16802, USA \\ \href{mailto:jharlim@psu.edu}{jharlim@psu.edu}
     \and Shixiao W. Jiang \\ Department of Mathematics\\ The Pennsylvania State University, PA 16802, USA \\ \href{mailto:suj235@psu.edu}{suj235@psu.edu}
       \and Senwei Liang \\ Department of Mathematics\\ Purdue University\footnote{Current institute.}, IN 47907, USA\\
       Department of Mathematics\\ National University of Singapore\footnote{Part of the work was done at the National University of Singapore}, Singapore\\ \href{mailto:liang339@purdue.edu}{liang339@purdue.edu}
         \and Haizhao Yang \\ Department of Mathematics\\ Purdue University, IN 47907, USA\\ \href{mailto:haizhao@purdue.edu}{haizhao@purdue.edu} }

\maketitle

\begin{abstract}
This article presents a general framework for recovering missing dynamical systems using available data and machine learning techniques. The proposed framework reformulates the prediction problem as a supervised learning problem to approximate a map that takes the memories of the resolved and identifiable unresolved variables to the missing components in the resolved dynamics.  We demonstrate the effectiveness of the proposed framework with a strong convergence error bound of the resolved variables up to finite time and numerical tests on prototypical models in various scientific domains. These include the 57-mode barotropic stress models with multiscale interactions that mimic the blocked and unblocked patterns observed in the atmosphere, the nonlinear Schr\"{o}dinger equation which found many applications in physics such as optics and Bose-Einstein-Condense, the Kuramoto-Sivashinsky equation which spatiotemporal chaotic pattern formation models trapped ion mode in plasma and phase dynamics in reaction-diffusion systems. While many machine learning techniques can be used to validate the proposed framework, we found that recurrent neural networks outperform kernel regression methods in terms of recovering the trajectory of the resolved components and the equilibrium one-point and two-point statistics. This superb performance suggests that a recurrent neural network is an effective tool for recovering the missing dynamics that involves approximation of high-dimensional functions.
\end{abstract}

{\bf Keywords.} Closure Modeling, Missing dynamics, Machine Learning, Long Short Term Memory

{\bf AMS subject classifications: 44A55, 65R10 and 65T50.}

\section{Introduction}

{T}he problem of missing dynamics is ubiquitous in any scientific domain that concerns with prediction through computational models. This long-standing problem has been posted under various names, including model error, sub-grid scale parameterization, closure modeling \cite{crommelin2008subgrid,kbm:10,bh:14,hmm:14,kwasniok2012data,lu2016comparison,lu2017accounting,mh:13,wilks2005effects}.  Another relevant topic of broad interest is the reduced-order modeling whose ultimate goal is to systematically deduce a computationally efficient model to predict the evolution of the resolved variables when the full underlying model is too expensive to solve \cite{givon2004extracting,majda1999models,majda2001mathematical,eweinan2007heterogeneous,chk:02,chorin2007problem,gouasmi2017priori,hl:15,kondrashov2015data}. In our context, the proposed framework adopted here does not require any knowledge of the full equations that govern the underlying dynamical systems. The proposed approach that we consider assumes that only the dynamical components corresponding to the resolved variables are given. As in \cite{jh:19}, the missing components will be learned from a historical time series of the resolved variables and the identifiable unresolved variables, where the latter serves as feedback from the interaction of the resolved and unresolved scales. In essence, the proposed approach is to learn a dynamical model for the identifiable unresolved variables that depend on both the resolved and unresolved variables.

The success of deep learning as a supervised learning algorithm has drawn tremendous interest on reduced-order modeling applications. A closely related approach to the modeling framework in this paper is presented in \cite{pan2018data}. They proposed a Feedforward Neural Network (FNN) as a representation of the dynamics of the irrelevant variables. These authors also provide a linear control theory perspective to justify the identifiability of their dynamical representation on a class of nonlinear systems with a dual linear closure. In this article, we consider the closure modeling framework with a nonparametric formulation and provide a strong convergence error bound of the estimation of the resolved variables for the first time. For discrete dynamics obtained from a temporal discretization of differential equations, we found that when the unresolved variables are fully identifiable, the error rate is $\mathcal{O}({T^2\Delta^2\epsilon})$, under mild assumptions. Here, $T>0$ denotes the prediction time index, $\Delta$ denotes the discrete time step, and $\epsilon>0$ denotes the approximation error of the missing dynamics. Recalling the theory of nonparametric regression \cite{stone1982optimal}, if the missing dynamics is a function of a Sobolev class, $H^\beta$, where $\beta>0$ denotes the regularity parameter, the learning rate $\epsilon$ of any nonparametric regression algorithm with i.i.d data has an optimal global error rate of an order $\epsilon = \mathcal{O}(N^{-\frac{\beta}{2\beta+d}})$, where $N$ denotes the length of training data and $d$ denotes the dimension of the domain of the function. Hence, nonparametric regression algorithms suffer from the curse of dimensionality unless when the regularity parameter $\beta=O(d)$. However, even for the case of $\beta=O(d)$, there are no efficient tools to carry out the computation for high-dimensional problems.

Fortunately, recent advances in the theoretical analysis of deep neural networks show that they can avoid the curse of dimensionality in terms of approximation error in both the case of sufficiently smooth functions \cite{barron1993,Hadrien,HadrienYangDu,HadrienYang,Shen3}, and even for continuous functions \cite{Shen4}. Also, there is no curse of dimensionality of deep neural networks in terms of generalization error when the target functions admit sufficient smoothness \cite{Weinan2018}, when the data are sampled on a low-dimensional manifold \cite{Masaaki}, or in the case of classification functions \cite{DBLP:journals/corr/abs-1902-01384}. While the generalization error for deep neural networks on general functions is an open problem, empirical numerical evidence has indicated that deep neural networks together with their stochastic training algorithms (e.g., batch-based stochastic gradient descent) are automatic tools that can identify the ``low complexity'' of the underlying systems, e.g., the smoothness or the low-dimensional domain that leads to no curse of dimensionality  \cite{barron1993,HadrienYang,HadrienYangDu,ShenYangZhang2,Shen3,Weinan2018}. In particular, recurrent neural networks as a special case of deep neural networks has the potential to avoid the curse of dimension when learning and predicting discrete dynamical systems with low complexity structures.

While the closure modeling framework can be numerically realized using any approximation/regression methods, we will consider a special type of recurrent neural networks called the Long-Short-Term-Memory (LSTM) \cite{Hochreiter_1997}. We will show that this approach can overcome the curse of dimension suffered by the standard nonparametric regression method such as the kernel mean embedding approximation used in \cite{jh:19}. Our choice of using the LSTM is also encouraged by the success of it in recent closure modeling applications as proposed in \cite{ma2018model,vlachas2018data,maulik2019time}. We should stress that these existing approaches \cite{ma2018model,vlachas2018data,maulik2019time} share a similarity, that is, they specify the closure model as a function of only the memory of the resolved variables and motivate their framework using a heuristic connection with the Mori-Zwanzig formalism \cite{mori:65,zwanzig:61}. In contrast,  we will demonstrate that it is critical for the closure model to also depend on the memory of the identifiable unresolved variables in addition to the resolved components. We will demonstrate the effectiveness of our framework on several tough prototype complex systems that arise in geophysical fluid dynamics, optics, quantum fluid such as Bose-Einstein-Condensate, and plasma physics, in addition to theoretical justification.

The rest of the paper is organized as follows. In Section~\ref{intro}, we formulate the problem, provide a simple example to elucidate the proposed formulation, and discuss the theoretical aspect of the proposed approach. In Section~\ref{lstm}, we provide a short discussion on the numerical aspect of LSTM as a special class of RNN. In Section~\ref{numerics}, we report the numerical results on three prototypical examples of different types of dynamical systems. In Section~\ref{summary}, we close the paper with a summary. The technical proofs of the theoretical result will be reported in Appendices.

\section{Data-Driven Modeling for Missing Dynamics}\label{intro}

Throughout this paper, we will describe the closure modeling approach in the context of discrete maps that naturally arise from numerical discretization of partial or ordinary differential equations. Stochastic differential equation will be discussed in the following sections. Let the resolved, $x_t\in\mathcal{X}$, and unresolved, $y_t\in\mathcal{Y}$, variables be the solution of the following deterministic discrete dynamical systems,
\begin{align}
\begin{split}
x_{t+1} = \mathcal{F}(x_t,y_t), \quad y_{t+1} = \mathcal{G}(x_t,y_t), \label{fulldyn}
\end{split}
\end{align}
given initial conditions $x_0,y_0$.

\begin{assumptions}\label{assumption1} Furthermore, we assume that the full system is ergodic with a unique invariant measure $\mu$. For the measure space $(\mathcal{X}\times\mathcal{Y},\mathcal{B},\mu)$, where $\mathcal{B}$ denotes the $\sigma$-algebra of set $\mathcal{X}\times\mathcal{Y}$, the map defined by
$\Phi:=(\mathcal{F},\mathcal{G})$ is invariant under measure $\mu$. That is, $\mu(\Phi^{-1}(B))=\mu(B)$, for $B\in\mathcal{B}$.

\end{assumptions}


Under this assumption, the missing dynamics problem is to predict $\{x_t:t\in\mathbb{N}\}$ and its statistics, such as, the mean, covariance, and auto-correlation functions, given only partial dynamics, $\mathcal{F}$. Basically, the absence of $\mathcal{G}$ means that we are missing the unresolved dynamics for $y$. Our goal is to reconstruct the missing dynamics in \eqref{fulldyn} from the given historical time series, $\{x_t,\theta_t\}_{t=1,\ldots,N}$, where $\theta_t:=\Theta(x_t,y_t)$ is the identifiable unresolved variable. Here, the observables $X:\mathcal{X}\times\mathcal{Y} \to \mathcal{X}$ and $\Theta:\mathcal{X}\times\mathcal{Y} \to \mathcal{W}$ are random variables defined as $X(x_t,y_t)=x_t$ and $\Theta(x_t,y_t)=\theta_t$, respectively. 

In particular, $\theta_t$ is the component of the unresolved variables that can be identified from $\mathcal{F}(x,y):=\mathcal{F}(x,\Theta(x,y))$ in \eqref{fulldyn} and observations $\{x_t\}$.  With this definition, we abuse the notation $\mathcal{F}$ to emphasize its dependence on $\theta$ (and suppress its dependence on $y$), so that in general, $\theta\neq y$ (see e.g, \eqref{Fadditive}). This restriction is motivated by practical constraints where only the resolved variables are observed. For example, any $\mathcal{F}$ can be decomposed into,
\BEA
\mathcal{F}(x_t,y_t) = \bar{\mathcal{F}}(x_t) +  \Theta(x_t,y_t) =  \bar{\mathcal{F}}(x_t) + \theta_t,\label{Fadditive}
\EEA
for some $\bar{\mathcal{F}}$ that involves only the resolved variables and the remainder term is the identifiable unresolved variable. Given $\{x_t\}$, one can extract a time series of $\theta_t = x_{t+1} - \bar{\mathcal{F}}(x_t)$ by a direct subtraction. In the case when the observed $x_t$ is noisy, one can also use appropriate filtering methods \cite{hmm:14,bh:16jcp}. We should point out that our formulation below also holds even if $\theta$ depends only on the unresolved variables $y$, so long as the time series of $\theta$ is available. In this case, the identifiability of $\theta$ will be related to the notion of reachability/observability in the control theory (e.g. see~Chapters 3 and 4 of \cite{chui2012linear}). In the numerical simulations shown below, we assume that a historical timeseries of $\{x_t,\theta_t\}_{t=1,\ldots,N}$ is available to us.

Our goal is to predict $\{x_t:t\in\mathbb{N}\}$ and its statistics, such as the mean, covariance, and auto-correlation functions, with the above constraints.  We also aim to reconstruct the missing dynamics in \eqref{fulldyn}.
Define $\bm{z}_{t,m}:=(\bm{x}_{t-m:t},\bm{\theta}_{t-m:t})\in \mathcal{Z}$ with $\bm{x}_{t-m:t}:=(x_{t-m},x_{t-m+1},\ldots ,x_{t})$ and $\bm{\theta}_{t-m:t}:=(\theta_{t-m},\theta_{t-m+1},\ldots ,\theta_{t})$ for some integer $m\geq 0$ which characterizes the memory length. We consider a general closure model of the following form,
\begin{align}
\begin{split}
\hat{x}_{t+1} = \mathcal{F}(\hat{x}_t,\hat{\theta}_t), \quad \hat{\theta}_{t+1} = \mathbb{E}^\epsilon[\Theta_{t+1}|{\bm{\hat{z}}_{t,m}}] +\xi_{t+1}, \label{approxdyn}
\end{split}
\end{align}
where $\hat{\cdot}$ is used to denote the numerical approximation of the corresponding variable in the closure model. In \eqref{approxdyn}, the notation $\mathbb{E}^\epsilon [\Theta_{t+1}|\cdot]:\mathcal{Z}\to\mathcal{W}$ is to denote an estimator to the target function $\mathbb{E} [\Theta_{t+1}|\cdot]:\mathcal{Z}\to\mathcal{W}$ with error $\epsilon>0$ in appropriate sense. Here, the random variable $\Theta_{t+1}:= S^{t+1}\circ\Theta$, where $S:L^2(\mu,\mathcal{W})\to L^2(\mu,\mathcal{W})$ denotes the associated Koopman operator that is defined as, $Sf := f\circ \Phi$, for all function $f:\mathcal{X}\times\mathcal{Y}\to\mathcal{W}$ of the Hilbert space $L^2(\mu,\mathcal{W})$, equipped with the inner product $\langle f, g\rangle_{L^2(\mu)} = \int_{\mathcal{X}\times\mathcal{Y}} \langle f(x,y),g(x,y)\rangle_{\mathcal{W}}d\mu(x,y)$. Here, the map $\Phi:=(\mathcal{F},\mathcal{G})$ denotes the full dynamics. With this definition, one can verify that evaluating $\Theta_{t+1}$ on initial condition $(x_0,y_0)$ produces $\Theta_{t+1}(x_0,y_0) = S^{t+1}\Theta(x_0,y_0) = \Theta\circ \Phi^{t+1}(x_0,y_0) = \Theta(x_{t+1},y_{t+1})=\theta_{t+1}$.

While the proposed framework suggests that one can use any supervised learning method to construct an estimator $\mathbb{E}^\epsilon  [\Theta_{t+1}|\cdot]$ (that can be in the form of parametric or nonparametric as we shall discuss in subsection~\ref{sec22}), to guarantee an accurate estimation of the path $x_t$, one should consider a consistent estimator, that is,  $\mathbb{E}^\epsilon  [\Theta_{t+1}|\cdot]\to \mathbb{E} [\Theta_{t+1}|\cdot]$ as $\epsilon\to 0$ in $L^2$ sense, as we shall discussed below in subsections~\ref{sec21} and \ref{sec:partialobs}. Another question that will be clarified in these two subsections is to which target function does the proposed estimator converge to. In other words, what is the target function  $\mathbb{E}[\Theta_{t+1}|\cdot]$? As we shall see later, this depends on the observable $\Theta$.

In \eqref{approxdyn}, the noise $\xi_t$ is added to account for the residual due to misspecification of hypothesis space for the target function $\mathbb{E}[\Theta_{t+1}|\cdot]$. In fact, we shall see from our numerical experiments below that this additional noise is not needed for the deterministic problems when LSTM is used as the estimator $\mathbb{E}^\epsilon[\Theta_{t+1}|\cdot]$. For simplicity, we only consider $\xi_t\sim\Xi$ to be Gaussian with variance,
\BEA
\mathbb{E}[\Xi^2] := \mathbb{E}\Big[(\Theta_{t+1}-\mathbb{E}^\epsilon[\Theta_{t+1}|Z_{t,m}] )^2 \Big],\label{noisebalancedmain}
\EEA
where we used the notation $Z_{t,m}$ to denote the random variables associated to the realizations $\bm{z}_{t,m}$.
Here, the variance will also be estimated by Monte-Carlo approximation to \eqref{noisebalancedmain} using the historical solutions of the ergodic system in \eqref{fulldyn}.

To summarize, the closure model is reformulated as a supervised learning problem to learn the map $\hat{\bm{z}}_{t,m}\mapsto \mathbb{E}[\Theta_{t+1}|{\bm{\hat{z}}_{t,m}}]$ and the variance $\mathbb{E}[\Xi^2]$ of the residual. In the next subsection, we discuss a simple case for which $\Theta(x,y) = y$. Subsequently, we discuss the notion of parametric and nonparametric estimators, $\mathbb{E}^\epsilon[\Theta_{t+1}|\cdot]$. We finally close this section with a study on the general case of $\Theta$, which constitutes a more complicated target function, $\mathbb{E}[\Theta_{t+1}|\cdot]$, obtained using a discrete Dyson formula.

\subsection{Fully identifiable unresolved variables}\label{sec21}
To give an intuition, suppose that the entire unresolved variables can be identified, that is, $\Theta(x,y)=y$. Since $\theta_t = y_t$, let us replace the notation $\Theta_{t+1}$ with $Y_{t+1}:=S^{t+1}\circ Y$ defined with the random variable $Y:\mathcal{X}\times\mathcal{Y} \to \mathcal{Y}$ in $L^2(\mu,\mathcal{Y})$ with $Y(x_t,y_t) = y_t$. In this case, it is clear that the target function is nothing but the full missing dynamics, namely, $\mathbb{E}[\Theta_{t+1}|{\bm{z}_{t,0}}] = \mathbb{E}[Y_{t+1}|x_t,y_t] =\mathcal{G}(x_t,y_t)$ such that one can rewrite \eqref{fulldyn} as,
\BEA
x_{t+1} = \mathcal{F}(x_t,y_t), \quad y_{t+1} =  \mathbb{E}[Y_{t+1}|x_t,y_t]. \label{fulldyn2}
\EEA

If $\mathbb{E}^\epsilon [Y_{t+1}|\cdot]$ is a consistent estimator of $\mathbb{E} [Y_{t+1}|\cdot]$ with variance error rate $\epsilon^2$, it is clear from
\eqref{noisebalancedmain} that $\mathbb{E}[\Xi^2]=\mathcal{O}(\epsilon^2)$. In this case, we can show that
\begin{theorem}
Let $\mathcal{F}$ and $\mathcal{G}$ be Lipschitz in $x$ and $y$. Let ${x}_{t+1}$ be the solutions of \eqref{fulldyn2} and $\hat{x}_{t+1}$ be the solutions of,
\begin{align}
\begin{split}
\hat{x}_{t+1} =\mathcal{F}(\hat{x}_t,\hat{y}_t),\quad \hat{y}_{t+1} = \mathbb{E}^\epsilon [Y_{t+1}|\hat{x}_{t},\hat{y}_t] +\xi_{t+1}, \label{approxdyn2}
\end{split}
\end{align}
under the same initial conditions $x_0=\hat{x}_0, y_0=\hat{y}_0$. Under the Assumption~\ref{assumption1}, if $\mathbb{E}^\epsilon [Y_{t+1}|\cdot] \to \mathbb{E} [Y_{t+1}|\cdot]$ as $\epsilon\to 0$ with variance error of order $\epsilon^2$, then
\BEA
\mathbb{E}\Big[\max_{t\in \{0,\ldots,T\}}|\hat{x}_{t} - x_{t}|\Big] =\mathcal{O}( a^T \epsilon).\label{bound1}
\EEA
for some constant $a>1$ that is independent of $T$ and $\epsilon$.
\end{theorem}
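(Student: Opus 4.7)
I would proceed by a coupled discrete Grönwall argument. Introduce the error sequences $e^x_t := \hat{x}_t - x_t$ and $e^y_t := \hat{y}_t - y_t$; by the common initial condition $e^x_0 = e^y_0 = 0$. Since $\mathcal{F}$ is Lipschitz (with constant $L_{\mathcal{F}}$ in its two arguments), subtracting the two $x$-recursions in \eqref{fulldyn2} and \eqref{approxdyn2} gives immediately $|e^x_{t+1}| \leq L_{\mathcal{F}}\bigl(|e^x_t| + |e^y_t|\bigr)$.

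For the $y$-equation, I would add and subtract the exact conditional expectation evaluated at the perturbed state, using that $\mathbb{E}[Y_{t+1}\mid \hat{x}_t,\hat{y}_t] = \mathcal{G}(\hat{x}_t,\hat{y}_t)$ by the identification made just above \eqref{fulldyn2}, to obtain
$e^y_{t+1} = \bigl(\mathbb{E}^\epsilon[Y_{t+1}\mid \hat{x}_t,\hat{y}_t] - \mathcal{G}(\hat{x}_t,\hat{y}_t)\bigr) + \bigl(\mathcal{G}(\hat{x}_t,\hat{y}_t) - \mathcal{G}(x_t,y_t)\bigr) + \xi_{t+1}.$
The middle piece is controlled by the Lipschitz constant $L_{\mathcal{G}}$ of $\mathcal{G}$, yielding $L_{\mathcal{G}}(|e^x_t| + |e^y_t|)$; the first piece has $L^2(\mu)$ norm $O(\epsilon)$ by the consistency hypothesis; and $\mathbb{E}\,|\xi_{t+1}| \leq \sqrt{\mathbb{E}[\Xi^2]} = O(\epsilon)$ by \eqref{noisebalancedmain}.

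Setting $E_t := |e^x_t| + |e^y_t|$ and $\eta_{t+1}$ for the collected residual, the two bounds combine into a scalar recursion $E_{t+1} \leq c\, E_t + \eta_{t+1}$, where $c$ depends only on $L_{\mathcal{F}}$ and $L_{\mathcal{G}}$. Iterating and using $E_0 = 0$ gives $E_t \leq \sum_{s=1}^{t} c^{t-s}\eta_s$. Choosing any $a := \max(c,2) > 1$ and using $c^{t-s} \leq a^{T-s}$ for $t \leq T$, we obtain $\max_{t\leq T} E_t \leq \sum_{s=1}^{T} a^{T-s} \eta_s$, so that taking expectation over the noise and invariant initial data yields $\mathbb{E}\bigl[\max_{t\leq T} E_t\bigr] = O(\epsilon)\cdot \frac{a^T-1}{a-1} = O(a^T \epsilon)$, which majorizes $\mathbb{E}\bigl[\max_t |e^x_t|\bigr]$ as claimed.

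The main obstacle in this plan is that the consistency bound on $\mathbb{E}^\epsilon[Y_{t+1}|\cdot] - \mathbb{E}[Y_{t+1}|\cdot]$ is in the $L^2(\mu)$-sense, yet it must be evaluated at the random argument $(\hat{x}_t,\hat{y}_t)$ whose law generally drifts away from $\mu$ under the imperfect closure. I would handle this either by strengthening the hypothesis to a sup-norm bound (natural when the estimator is trained on a compact invariant set and the dynamics are bounded), or by a change-of-measure controlling the Radon-Nikodym derivative of the law of $(\hat{x}_t,\hat{y}_t)$ with respect to $\mu$ and absorbing the resulting factor into $a$. Assumption~\ref{assumption1} is crucial here: it ensures that the true trajectory, initialized from $\mu$, remains $\mu$-distributed for all $t$, providing the anchor to which the perturbed flow is compared.
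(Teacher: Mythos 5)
Your proposal is correct and follows essentially the same route as the paper's proof in Appendix~\ref{app:A}: the same add-and-subtract decomposition using $\mathbb{E}[Y_{t+1}\mid \hat{x}_t,\hat{y}_t]=\mathcal{G}(\hat{x}_t,\hat{y}_t)$, Lipschitz bounds on $\mathcal{F}$ and $\mathcal{G}$, an $\mathcal{O}(\epsilon)$ bound on the estimator error and on $\mathbb{E}|\xi_{t+1}|$, and a discrete Gr\"onwall iteration from zero initial error. The only real difference is bookkeeping: you collapse the coupled errors into the single scalar $E_t=|e^x_t|+|e^y_t|$ and iterate one scalar recursion, whereas the paper keeps the two recursions separate, unrolls the $y$-recursion into a geometric sum, substitutes into the $x$-recursion, and then invokes Lemma~\ref{lemma1}; both yield the same $\mathcal{O}(a^T\epsilon)$ rate. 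The caveat you raise --- that the $L^2(\mu)$ consistency bound must be evaluated along the law of $(\hat{x}_t,\hat{y}_t)$, which drifts from $\mu$ --- is genuine, and the paper's proof does not resolve it either (it simply absorbs that term into $C\epsilon$ via an appeal to the Burkholder--Davis--Gundy inequality), so your proposed sup-norm strengthening or change-of-measure argument is a fair, and arguably necessary, repair rather than a deviation.
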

\begin{proof}
See Appendix~\ref{app:A}.
\end{proof}

This rather pessimistic error bound (exponential on $T$) is not surprising since the assumption on $\mathcal{F}$ and $\mathcal{G}$ is mild, Lipschitz continuous. This error bound is basically an extension of a classical result in dynamical system theory, the continuous dependence of the solutions of uniformly perturbed vector field (e.g., Chapter 17.5 in \cite{hirsch2012differential}). To obtain an improved result (such as polynomial on $T$), one should consider the structure on $\mathcal{F},\mathcal{G}$. For example, when the discrete dynamical system is a result of the Euler-Maruyama discretization on a system of stochastic differential equations,
\begin{align}
dx = f(x,y)\,dt + \sigma_x dW_{x,t}, \quad\quad dy = g(x,y)\,dt + \sigma_y dW_{y,t}\nonumber,
\end{align}
where $dW_{x,t}$ and $dW_{y,t}$ denote independent Gaussian white noises, we have:
\begin{align}\label{EM}
x_{t+1} =  x_t + f(x_t,y_t)\Delta + \sigma_x \Delta^{1/2} \xi_{x,t+1}, \quad\quad y_{t+1} =  y_t + g(x_t,y_t)\Delta + \sigma_y\Delta^{1/2} \xi_{y,t+1},
\end{align}
where $\Delta$ denotes the time step. Here, $\xi_{x},\xi_y \sim\mathcal{N}(0,\mathcal{I})$ are samples of the standard Gaussian white noises. When $g$ and $\sigma_y$ are unknown, we can directly estimate these terms and obtain a sharper error bound:
\begin{theorem}
{blue}Let $\mathcal{F}$ and $\mathcal{G}$ be Lipschitz in $x$ and $y$.
Let ${x}_{t+1}$ be the solutions of \eqref{EM} and $\hat{x}_{t+1}$ be the solutions of,
\begin{align}
\begin{split}
\hat{x}_{t+1} &= \hat{x}_t + f(\hat{x}_t,\hat{y}_t)\,\Delta + \sigma_x \Delta^{1/2}\xi_{x,t+1}, \\
\hat{y}_{t+1} &= \hat{y}_{t} + \Delta \mathbb{E}^\epsilon \Big[Y^\Delta_{t+1} |\hat{x}_{t},\hat{y}_t\Big] + \hat\sigma_y \Delta^{1/2} \xi_{y,t+1}, \label{approxdyn3}
\end{split}
\end{align}
under {the same} initial conditions $x_0=\hat{x}_0, y_0=\hat{y}_0$. Here, we have defined $Y^\Delta_{t+1}:=\frac{Y_{t+1}-Y_t}{\Delta}$ and the noise variance,
\BEA
\hat\sigma_{y}^2 \Delta := \mathbb{E}\Big[\Big(Y_{t+1}-Y_t-\Delta \mathbb{E}^\epsilon\Big[Y^\Delta_{t+1}|X_t,Y_t\Big]\Big)^2\Big]\nonumber
\EEA
is estimated from the training data. Suppose that the learning variance error rate is,
\BEA
\mathbb{E}[ ( \mathbb{E}[Y^\Delta_{t+1}| X_t, Y_t] - \mathbb{E}^\epsilon[Y^\Delta_{t+1}| X_t, Y_t] )^2 ] \leq C\epsilon^2,\nonumber
\EEA
for some $C>0$. Let $f$ and $g$ be Lipschitz continuous in $x$ and $y$, then,
\BEA
\mathbb{E}\Big[\max_{t\in \{0,\ldots,T\}}|\hat{x}_{t} - x_{t}|\Big] = \mathcal{O}(\epsilon T^2\Delta^2).\nonumber
\EEA
\end{theorem}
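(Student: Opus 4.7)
The plan is to track pathwise error propagation, exploit two structural cancellations that are special to the Euler--Maruyama update, and then close with a two-level discrete Gronwall estimate that produces the asymmetric $T^{2}\Delta^{2}$ factor on the $x$-error. First I would define $e^x_t := \hat x_t - x_t$ and $e^y_t := \hat y_t - y_t$ and subtract the exact and approximate recursions. Because the common Brownian increment $\sigma_x\Delta^{1/2}\xi_{x,t+1}$ cancels in the $x$-equation, Lipschitz continuity of $f$ yields the pathwise inequality
\[
|e^x_{t+1}| \le (1+L_f\Delta)\,|e^x_t| + L_f\Delta\,|e^y_t|,
\]
which, once iterated with $e^x_0=0$, dominates $\max_{t\le T}|e^x_t|$ by a non-decreasing deterministic envelope built from $\{|e^y_s|\}_{s<T}$.

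For the $y$-error I would use the identity $\mathbb{E}[Y^\Delta_{t+1}\mid X_t,Y_t]=g(X_t,Y_t)$, which follows directly from \eqref{EM}, to decompose
\[
e^y_{t+1} = e^y_t + \Delta\bigl(g(\hat x_t,\hat y_t)-g(x_t,y_t)\bigr) + \Delta\,\delta(\hat x_t,\hat y_t) + (\hat\sigma_y-\sigma_y)\Delta^{1/2}\xi_{y,t+1},
\]
with $\delta(x,y) := \mathbb{E}^\epsilon[Y^\Delta_{t+1}\mid x,y] - \mathbb{E}[Y^\Delta_{t+1}\mid x,y]$ the regression residual controlled in $L^2$ by $C\epsilon$ by hypothesis. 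To show the noise-mismatch term is higher order, I would expand the defining expression for $\hat\sigma_y^2\Delta$ about the true drift; the cross term vanishes by the tower property applied to the conditionally mean-zero Gaussian increment $\sigma_y\Delta^{1/2}\xi_{y,t+1}$, leaving $\hat\sigma_y^2-\sigma_y^2 = \Delta\,\mathbb{E}[\delta^2] \le C\Delta\epsilon^2$. Assuming $\sigma_y$ is bounded below, this yields $|\hat\sigma_y-\sigma_y|=O(\Delta\epsilon^2)$ and a per-step variance contribution of only $O(\Delta^{3/2}\epsilon^2)$, strictly smaller than the learning term $\Delta\epsilon$.

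Taking $L^1$ expectations and using the regression bound then produces the coupled recursions
\[
\mathbb{E}|e^x_{t+1}| \le (1+L_f\Delta)\mathbb{E}|e^x_t| + L_f\Delta\,\mathbb{E}|e^y_t|,\qquad
\mathbb{E}|e^y_{t+1}| \le L_g\Delta\,\mathbb{E}|e^x_t| + (1+L_g\Delta)\mathbb{E}|e^y_t| + C\Delta\epsilon.
\]
A first, coarse Gronwall on $U_t := \mathbb{E}|e^x_t|+\mathbb{E}|e^y_t|$ gives $U_t \le C_1\epsilon\,t\Delta$ on any bounded horizon, and in particular the one-sided estimate $\mathbb{E}|e^y_s|\le C_1\epsilon\,s\Delta$. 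Substituting this refined bound back into the $x$-recursion yields the double integration
\[
\mathbb{E}\!\max_{t\le T}|e^x_t| \le L_f\Delta\sum_{s=0}^{T-1}(1+L_f\Delta)^{T-1-s}\,\mathbb{E}|e^y_s|
\le C_2\,\epsilon\,\Delta^2\sum_{s=0}^{T-1}s = \mathcal{O}(\epsilon\,T^2\Delta^2),
\]
which is the target rate; the maximum is absorbed on the left by the monotone envelope identified in the first paragraph.

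The most delicate point I foresee is that the regression bound $\mathbb{E}[\delta(X,Y)^2]\le C\epsilon^2$ is natural over the training (invariant) measure $\mu$, whereas here it must be applied along the perturbed trajectory $(\hat x_t,\hat y_t)$. Bridging this gap---via a uniform-in-$(x,y)$ regression estimate, an a priori stability argument that keeps $(\hat x_t,\hat y_t)$ close to $\mathrm{supp}(\mu)$, or a short-time ergodicity argument using Assumption~\ref{assumption1}---is where the main technical care lies; once this is granted, the remainder of the proof is the two cancellations above combined with the two standard discrete Gronwall estimates.
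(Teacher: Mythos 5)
Your proposal is correct and follows essentially the same route as the paper's proof in Appendix~B: the same cancellation of the common noise in the $x$-equation, the same expansion showing $\hat\sigma_y^2-\sigma_y^2=\mathcal{O}(\Delta\epsilon^2)$ via the vanishing cross term, and the same structural observation that the $\epsilon$-forcing enters only the $y$-recursion, so the $x$-error accrues it through a double summation of size $T^2\Delta^2$. The only cosmetic difference is the final bookkeeping---you bootstrap a coarse Gronwall bound $\mathbb{E}|e^y_s|\lesssim\epsilon\, s\Delta$ back into the $x$-recursion, whereas the paper iterates the coupled system as a $2\times 2$ matrix recursion, diagonalizes, and Taylor-expands $(1+2\ell\Delta)^{T+1}$---and the measure-mismatch issue you flag at the end is real but is equally glossed over in the paper, which silently applies the $L^2(\mu)$ learning bound along the perturbed trajectory.
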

\begin{proof}
See Appendix~\ref{app:B}.
\end{proof}

This result suggests that the solution of the proposed approximate dynamics in \eqref{approxdyn3} strongly converges to that of \eqref{EM} up to finite time. The convergence rate suggests that one can expect a path-wise prediction with an accuracy of order learning rate error, $\epsilon$, up to order-one model unit time, $(T\Delta)^2 = \mathcal{O}(1)$. In other words, the length of accurate path-wise prediction is inversely proportional to the square root of the learning error rate, $T\Delta \approx \epsilon^{-1/2}$. Using a consistent learning algorithm, $\epsilon \to 0$, one can control the polynomial error growth. In the next section, we will use this error rate to estimate an accurate prediction time as a function of the number of training data, using a class of hypothesis space with well-known optimal (in the sense of bias-variance tradeoff) learning rate.



\subsection{Parametric versus nonparametric closure models} \label{sec22}

The essence of parametric closure modeling is to simply specify $\mathbb{E}^\epsilon$ in \eqref{approxdyn} with a specific choice of function $\mathcal{P}(\hat{\bm{z}}_{t,m};W)$ that depends on a finite-dimensional parameter $W$. The choice of ansatz $\mathcal{P}$ is usually based on physical intuition; see e.g., \cite{mh:13,hmm:14,bh:14,lu2016comparison,lu2017accounting}. Once the model is specified, the hyper-parameter $W$ can be obtained by regressing the pairs  $\{\bm{z}_{t,m},\theta_{t+1}\}$. Subsequently, the variance of $\Xi$ is estimated as in \eqref{noisebalancedmain}. In the later section, we will consider the Long-Short-Term-Memory model for $\mathcal{P}$.

In \cite{jh:19}, a non-parametric closure model is considered. Specifically, the conditional expectation in \eqref{approxdyn} is estimated using the kernel mean embedding of conditional distribution formula \cite{song2009,song2013}. In the implementation, they assume that $\mathbb{E}[\theta_{t+1}|\cdot]$ belongs to the reproducing kernel Hilbert space (RKHS) $\mathcal{H}\subset L^2(\mathcal{Z},q)$ with a well-defined Mercer-type kernel, constructed based on orthonormal basis functions $\{\varphi_k\}$ of this $L^2$-space, weighted by an arbitrary positive $q\in L^1(\mathcal{Z})$ \cite{jh:19}. For non-compact domain, such construction was studied in \cite{ZHL:19}. The advantage of RKHS beyond inheriting the orthogonality from the $L^2$-space is that any sequence of functions that converges in $\mathcal{H}$-norm also converges uniformly and any function in $\mathcal{H}$ inherits the regularity of the kernel. In fact, one can show that for appropriate choice of kernel, the RKHS $\mathcal{H}$ is dense in $C_b(\mathcal{Z})$ for compact $\mathcal{Z}$ \cite{steinwart2001influence} or $C_0(\mathcal{Z})$ for non-compact $\mathcal{Z}$ \cite{ZHL:19}.  Then, for any $\bm{z}_t\in\mathcal{Z}$, we can represent:
\BEA
\mathbb{E}[\theta_{t+1}|\bm{z}_t] = \sum_{k=0}^\infty c_k \varphi_k(\bm{z}_t),\label{RKHS}
\EEA
where the coefficients $c_k$ are precomputed from the available training data $\{\theta_t,x_t\}$; see \cite{jh:19} for details. The key point is that this nonparametric formulation turns the problem of choosing the closure model into a problem of constructing basis functions of a Hilbert space (i.e., choosing an appropriate hypothesis space). Since $\mathcal{H}$ is dense in the space of continuous bounded functions, then any bounded continuous parametric function $\mathcal{P}(\bm{z};W)$ can be consistently approximated by a truncation of the series expansion in \eqref{RKHS} for appropriate choice of basis functions. In this sense, choosing a parametric-based model is somewhat equivalent to specifying an appropriate RKHS space.

Now, let us demonstrate the effectiveness of this approach in the following simple yet nontrivial example. \vspace{5pt}

\noindent{\bf Example:} Consider a Langevin dynamics
\begin{align}
\begin{split}
dx &=y\,dt,  \label{Eqn:linear_Gauss} \\ dy & =(-\nabla V\left( x\right) -\gamma y)\,dt+\sigma _{y}\, dW_t,
\end{split}
\end{align}
where $x\in\mathbb{R}$ is the displacement, $y\in\mathbb{R}$ is the velocity, $V\left( x\right)
=-x^{2}/2+x^{4}/4$ is the double-well potential, $\gamma =1$ is the damping
coefficient, $dW$ is a standard Gaussian white noise, and $\sigma
_{y}=3\sqrt{2}/10$ is the driving strength. We observe
the trajectories of the variables $(x_t,y_t)$ at every time step $\Delta = 0.01$, obtained
using the Euler-Maruyama discretization scheme. In our previous notation, $\theta(y) = y$
and we consider a closure model in \eqref{approxdyn} with $\mathbb{E}[\theta_{t+1}|{\bm{\hat{z}}_{t,0}}] = \mathbb{E}\left[ Y_{t+1}|\hat{x}_{t},\hat{y}_{t}\right]$.  This example is nontrivial due to the transition state induced by the double-well potential $%
V\left( x\right) $. Note that only when the driving strength $\sigma _{y}$
is in a reasonable region, the transition state phenomenon can be observed
for this double-well potential system (see Fig. \ref{figure1} for
example).

In Fig.~\ref{figure1}, we compare the result obtained using the RKHS approximation in \eqref{RKHS} with the true trajectories and the statistics from the full model in \eqref{Eqn:linear_Gauss}. In this comparison, we apply the formula in \eqref{RKHS} with a tensor product of $50\times50$ Hermite polynomials.
We compare the prediction of the trajectory up to finite-time, marginal density of $x$, and auto-covariance function $\mathbb{E}[x_\tau x_0]$ of the true dynamics in \eqref{Eqn:linear_Gauss} with those from the closure models, trained using $N=5\times 10^4$ and $5\times 10^5$ data points. Notice that using large enough training data, we are not only accurately recovering the trajectory path-wise longer in time but also the density and auto-covariance function. Compare to the optimal learning rate $\epsilon = \mathcal{O}(N^{-\frac{\beta}{2\beta+d}})$ of \cite{stone1982optimal} for very smooth function with $\beta=\infty$, the empirical prediction length (as shown in Fig.~\ref{figure1}) is on the order of the theoretical prediction length $T\Delta = \epsilon^{-1/2}\approx 14.95$ for $N=5\times 10^4$ and is slightly longer than the conservative estimate $T\Delta = \epsilon^{-1/2}\approx 26.59$ for $N=5\times 10^5$. In Table~\ref{table1}, we also see the agreement of several statistics that are commonly used to characterize {metastable} dynamics. When the training data set is large, we see a relatively accurate estimation of the mean exit time $\bar{\tau}_0$ of a particle to escape one of the wells \cite{gdls:2014} and the reaction rate $\nu_{R}$, defined as the limit of $N_T/T$ as $T\to\infty$ where $N_T$ is the number of trajectories to escape a well in the time interval $T$ \cite{vanden2010transition}.

\begin{figure}
\minipage{1\textwidth}
\minipage{0.5\textwidth}
\subcaption{Trajectories ($N=50,000$)}
\vspace{-5pt}
\includegraphics[width=\textwidth]{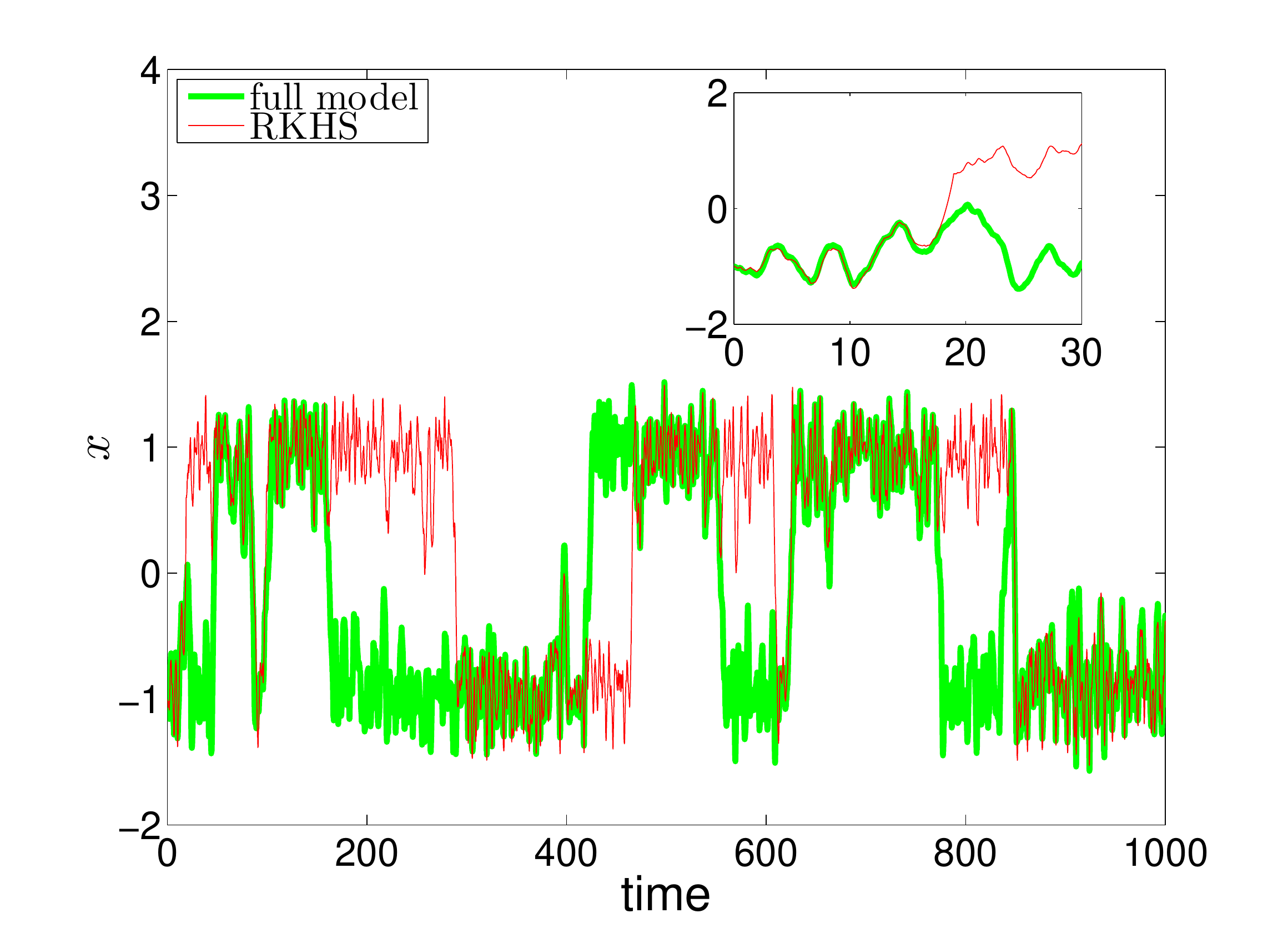}
\endminipage\hfill
\minipage{0.5\textwidth}
\subcaption{Trajectories ($N=500,000$)}
\vspace{-5pt}
\includegraphics[width=\textwidth]{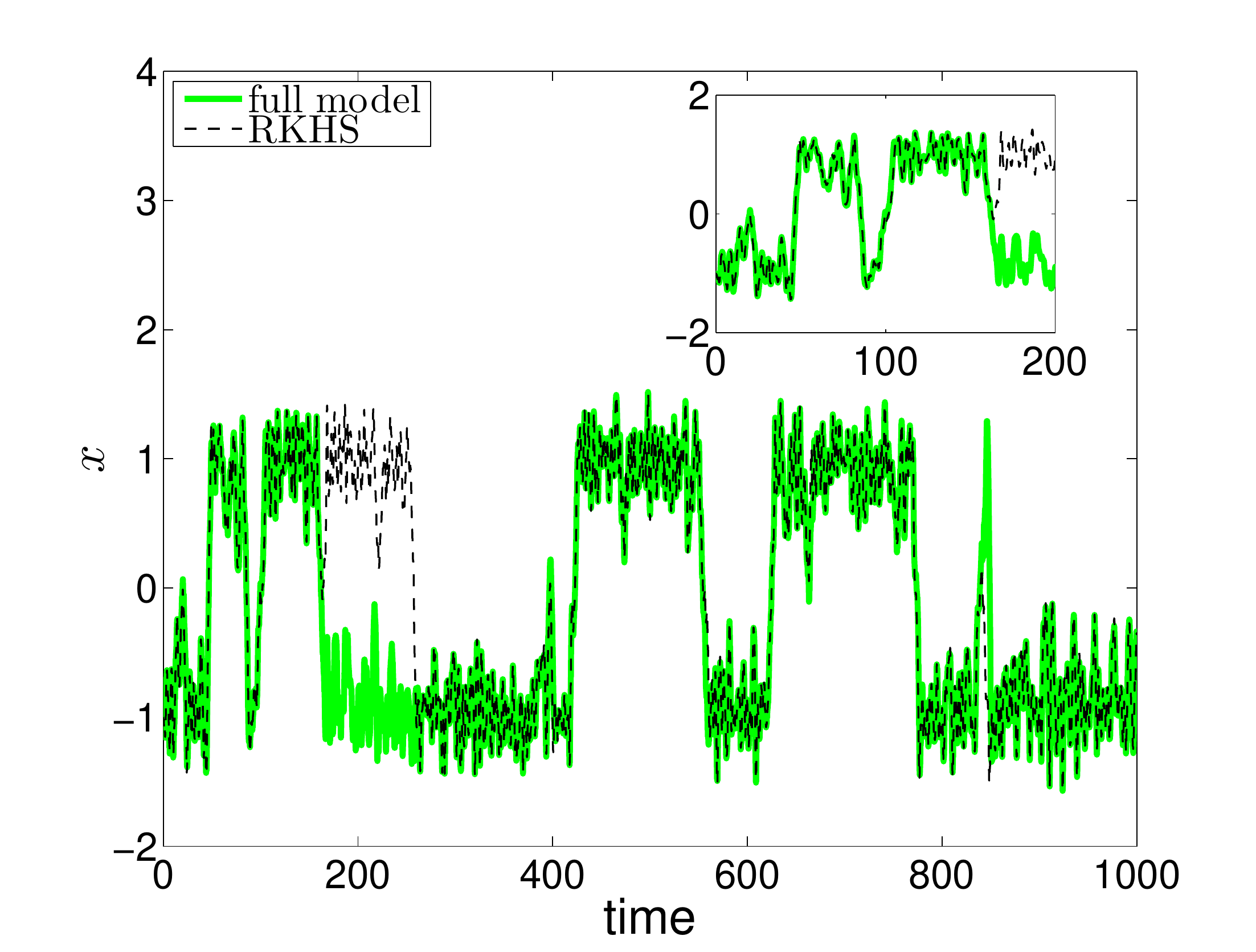}
\endminipage
\endminipage\hfill
\minipage{1\textwidth}
\minipage{0.5\textwidth}
\subcaption{Densities}
\vspace{-5pt}
\includegraphics[width=\textwidth]{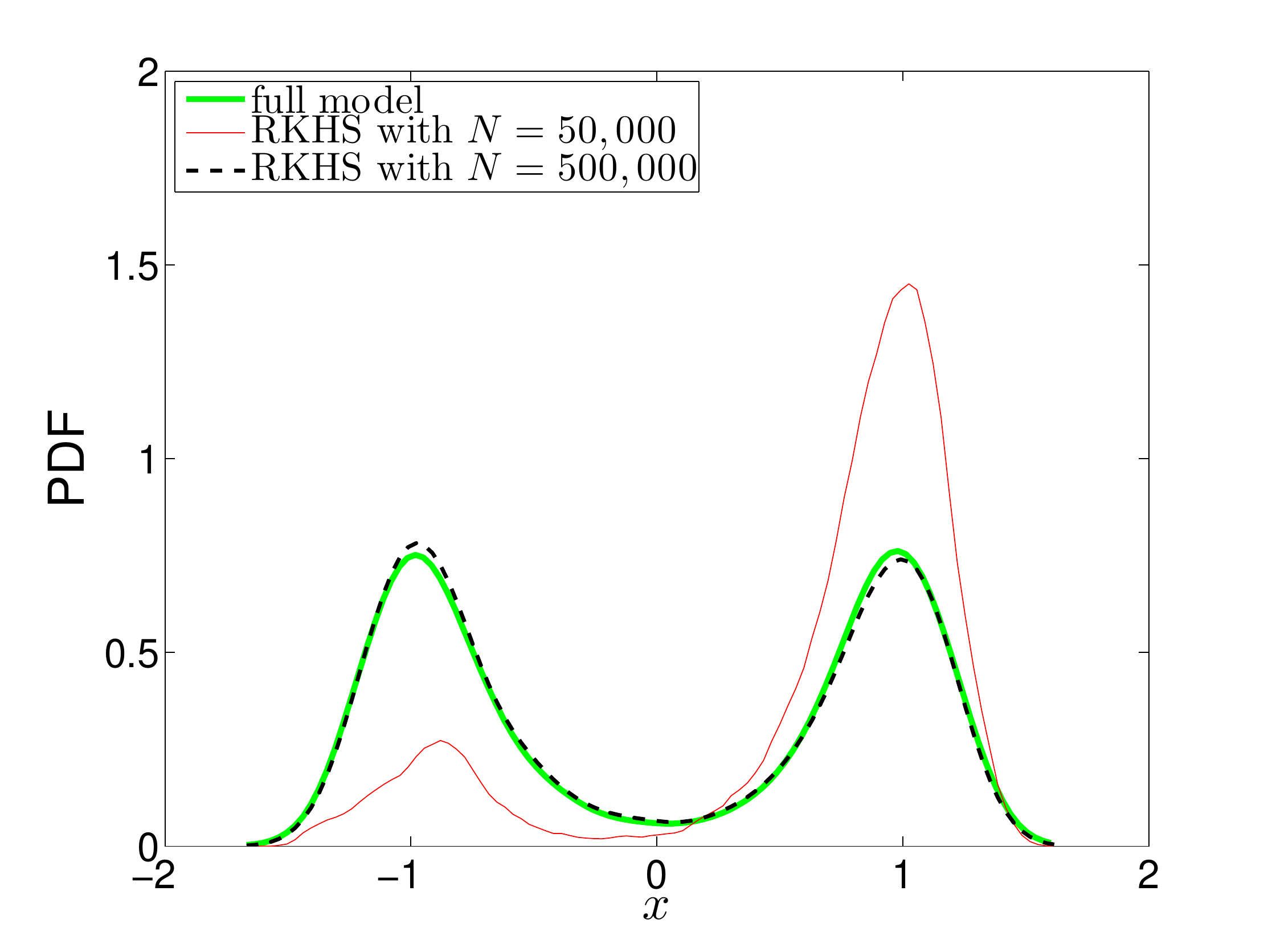}
\endminipage
\minipage{0.5\textwidth}
\subcaption{ACVs}
\vspace{-5pt}
\includegraphics[width=\textwidth]{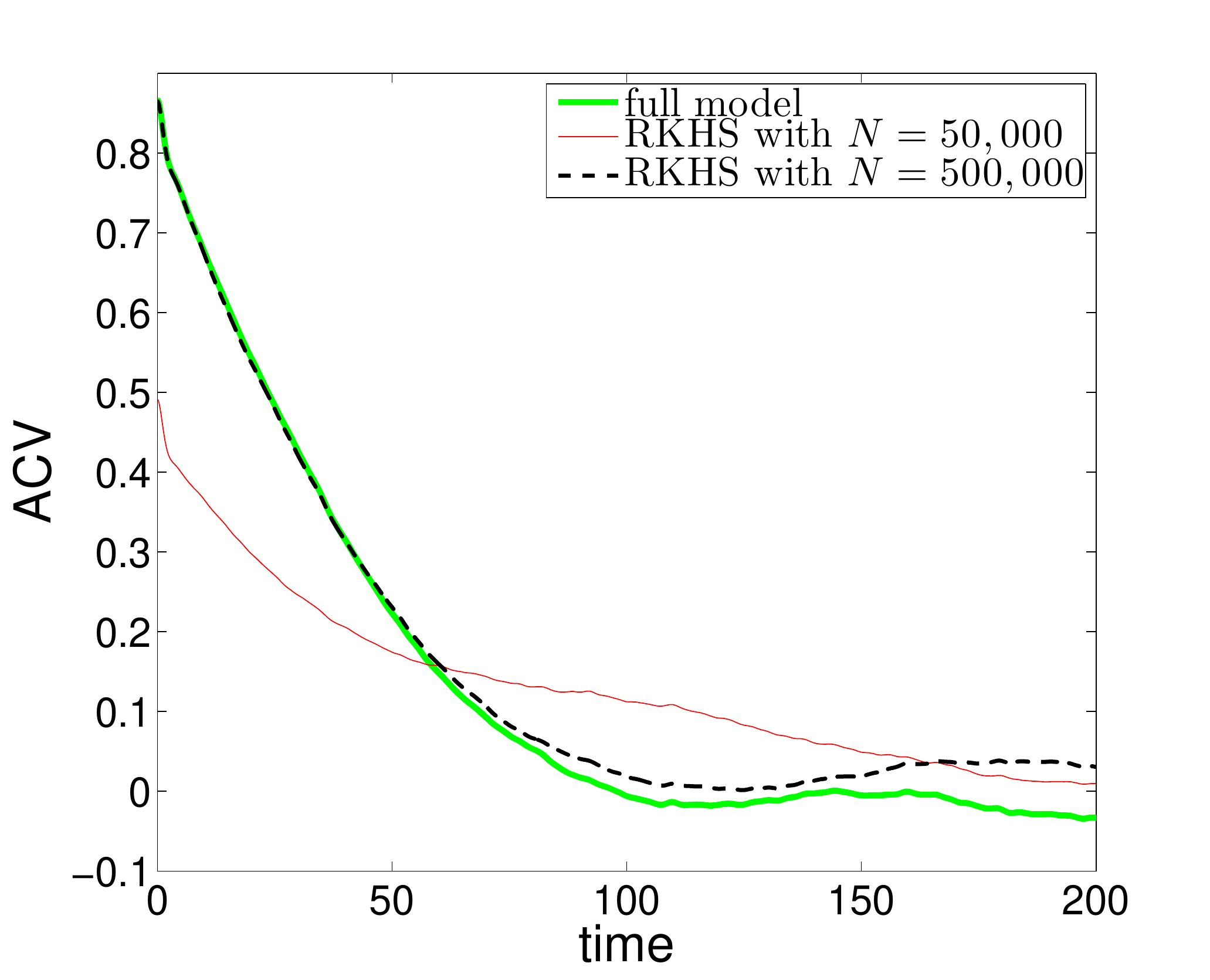}
\endminipage
\endminipage\hfill

\caption{The top panels display comparison of the
trajectories between the full model (green) and RKHS closure model using different size of training
dataset. The bottom panels compare
PDFs and auto-covariance functions (ACVs) among the full model (green), RKHS closure models.}
\label{figure1}
\end{figure}

\begin{table}
{\caption{Comparisons of mean exit time $\bar{\tau}_0$ and reaction rate $\nu_{R}$ between the full model and closure models.}\label{table1}}
\begin{center}
{\begin{tabular}{|c|c|c|c|}
\hline
& True & RKHS $N=50,000$ & RKHS $N=500,000$ \\
\hline
$\bar{\tau}_0$ & 99.2 & 69.1& 102.7 \\
$\nu_R$ & 0.0079 & 0.0040 & 0.0075 \\ \hline
\end{tabular}}
\end{center}
\end{table}

\subsection{Partially identifiable unresolved variables} \label{sec:partialobs}

While the closure model in \eqref{approxdyn2} is theoretically consistent and the example above shows a very promising approach, in real applications, the function $\Theta(x,y)\neq y$ since the unresolved variables, $y$, are usually not identifiable from the data $\{x_t\}$ and the map $\mathcal{F}(x_t,\theta_t)$. Even if the full data of $y$ are available, they are very high-dimensional relative to $\theta$. What is usually identified is $\theta$ in the sense of \eqref{Fadditive}, which yields the same dimensionality as the resolved variable $x$.

In this case, let us rewrite the underlying dynamics in \eqref{fulldyn} as a function of $(x,\theta)$. To do this, we consider the Koopman operator $S:\mathcal{H}\to\mathcal{H}$ defined as, $S \circ f = f \circ \Phi $, for $f \in \mathcal{H}$, a space of $(\mathcal{X}\times\mathcal{W})$-valued functions $f:\mathcal{X}\times\mathcal{Y}\to \mathcal{X}\times\mathcal{W}$, equipped with an inner product, $\langle f,g\rangle_{\mathcal{H}} := \int_{\Omega} \langle f(\omega),g(\omega)\rangle_{\mathcal{X}\times\mathcal{W}}\, d\mu(x)$, weighted by the invariant measure $\mu$. The main interest is of observable function $\pi\in \mathcal{H}$ defined as $\pi:=(X,\Theta)$, such that $\pi(x,y) = (X(x,y),\Theta(x,y))=(x,\theta), \forall (x,y)\in\mathcal{X}\times\mathcal{Y}$, which can be interpreted as a map that changes coordinate.
For identifiable unresolved component as defined in \eqref{Fadditive}, then $\mathcal{X}=\mathcal{W} = \mathbb{R}^{d_x}$ for $d_x$-dimensional real-valued observable; if $\mathcal{Y}=\mathbb{R}^{d_y}$, where $d_y\gg d_x$, then the random variable $\pi$ maps vectors in $\mathbb{R}^{d_x+d_y}$ into vectors in $\mathbb{R}^{2d_x}$. Let $P:\mathcal{H}\to\mathcal{H}$ be an orthogonal projection operator to a closed set of functions of $(x_0,\theta_0)$, namely $\mathcal{V}=\{f\in \mathcal{H}: f=g\circ \pi \,|\, g: \mathcal{X}\times\mathcal{W} \to \mathcal{X}\times\mathcal{W} \}$. To this end, we also define $Q:=I-P$ be the projection map to the orthogonal space $\mathcal{V}^\perp\subset\mathcal{H}$. We now rewrite the dynamics of the observables $(x,\theta)$ by applying the Dyson formula \cite{darve2009computing,lin2019data},
\BEA
S^{t+1} = \sum_{k=0}^t S^{t-k}PS(QS)^k + (QS)^{t+1},\nonumber
\EEA
on $\pi$, evaluated at initial condition $(x_0,y_0)$ with distribution $\mu$. The left-hand-side term, $S^{t+1} \pi(x_0,y_0) = \pi (\Phi^{t+1}(x_0,y_0)) = \pi(x_{t+1},y_{t+1})=(X(x_{t+1},y_{t+1}),\Theta(x_{t+1},y_{t+1}))= (x_{t+1},\theta_{t+1})$. Evaluating the first term on the right-hand-side on $\pi(x_0,y_0)$, we obtain $S^{t-k} PS(QS)^k \pi(x_0,y_0) =PS(QS)^k \pi(x_{t-k},y_{t-k})$. For $k=0$, we obtain the Markovian term, 
\BEA
PS\pi(x_{t},y_{t}) =P(\pi\circ\Phi(x_{t},y_{t}))  = P(\pi\circ(\mathcal{F}(x_{t},y_{t}),\mathcal{G}(x_{t},y_{t}))). \label{Markovian}
\EEA
In order to write \eqref{Markovian} in terms of $(x_t,\theta_t)$, we use the definition of $\mathcal{V}$. Particularly, since $PS\pi\in \mathcal{V}$, there exists a function $\bar{\Phi}_0: \mathcal{X}\times\mathcal{W} \to \mathcal{X}\times\mathcal{W}$ such that $PS\pi = \bar{\Phi}_0 \circ \pi$. Note that $PS$ is a linear operator whereas $\Phi_0$ is a function (possibly nonlinear) where they share the same range in $\mathcal{V}$. 
Let $\bar{\Phi}_0= (\bar{\mathcal{F}}_0,\bar{\mathcal{G}}_0)$, where the maps $\bar{\mathcal{F}}_0: \mathcal{X}\times\mathcal{W} \to \mathcal{X}$ and $\bar{\mathcal{G}}_0: \mathcal{X}\times\mathcal{W} \to \mathcal{W}$ denote the $x-$ and $\theta-$components of the Markovian term in \eqref{Markovian}, respectively. 
Since $\pi$ is identity in the $x$-direction, then the $x-$component of \eqref{Markovian} is simply $P\mathcal{F}(x_{t},y_{t})$. But since $\mathcal{F}(x,y) := \mathcal{F}(x,\theta)$ as we explained prior to Eq.~\eqref{Fadditive}, it is clear that $P\mathcal{F}(x_{t},y_{t}) = \mathcal{F}(x_{t},\theta_{t})$, which is nothing but $\bar{\mathcal{F}}_0(x_{t},\theta_{t})$.

For the memory terms, $k>0$, since $PS(QS)^k\pi \in \mathcal{V}$, there exists $\bar{\Phi}_k: \mathcal{X}\times\mathcal{W} \to \mathcal{X}\times\mathcal{W}$ such that $PS(QS)^k\pi   = \bar{\Phi}_k \circ \pi$, by the definition of $\mathcal{V}$. As before, we let $\bar{\Phi}_k= (\bar{\mathcal{F}}_k,\bar{\mathcal{G}}_k)$ such that the maps $\bar{\mathcal{F}}_k: \mathcal{X}\times\mathcal{W} \to \mathcal{X}$ and $\bar{\mathcal{G}}_k: \mathcal{X}\times\mathcal{W} \to \mathcal{W}$ denote the $x-$ and $\theta-$components of the memory functions, respectively. Since $Q\mathcal{F}=0$, the $x-$component of the dynamics has no memory term and therefore $\bar{\mathcal{F}}_k=0, k>0$. Putting all these together, we have
\begin{align}
\begin{split}\label{fulldyn3}
x_{t+1} &= \mathcal{F}(x_t,\theta_t), \\
\theta_{t+1} &= \bar{\mathcal{G}}_0(x_t,\theta_t) +  \sum_{k=1}^t \bar{\mathcal{G}}_k (x_{t-k},\theta_{t-k}) + (QS)^{t+1}(x_0,y_0).
\end{split}
\end{align}
Here the dynamics of $\theta$ inherits Markovian, memory and orthogonal terms, where the last term is usually regarded as noise with the randomness corresponding to the distribution of the initial condition, $\mu$. 

Conceptually, one can consider learning the entire dynamics of $\theta$ by considering the target function $\mathbb{E}[\Theta_{t+1}|\bm{z}_{t,t}]$, conditioned to the entire history of $\bm{z}_{t,t}=(\bm{x}_{0:t}, \bm{\theta}_{0:t})$, which is not practical. In applications, however, the length of the memory is often finite, $0<m<t$, and it can be estimated as shown in \cite{gouasmi2017priori,parish2017dynamic}. In fact, for nonlinear systems with linear dual closure (that is, the vector field is linear in terms of $\theta$), one can determine the minimum length of $m$ to guarantee the identifiability of $\theta_{t+1}$ from observed data of $\bm{z}_{t,m}:=(\bm{x}_{t-m:t},\bm{\theta}_{t-m:t})\in \mathcal{Z}$  \cite{pan2018data}. We should point out that while the linear dual closure condition is satisfied in the Langevin example above, it is not necessarily satisfied for all examples in Section~\ref{numerics}, that is, while the dependence of $\mathcal{F}$ on $\theta$ is linear as  in \eqref{Fadditive}, the dependence of $\bar{\mathcal{G}}_0$ on $\theta$ in \eqref{fulldyn3} may not be linear. Also, the second fluctuation-dissipation theorem \cite{Zwanzig2001} states that the time correlation of the orthogonal dynamics is proportional to the memory function. This suggests that if the memory term dissipates beyond $m$-lags, then the orthogonal dynamics should also depend on state variables of not longer than $m$-lags. With this in mind, let us rewrite \eqref{fulldyn3} as,
\BEA
\theta_{t+1} = \bar{\mathcal{G}}_0(x_t,\theta_t) +  \sum_{k=1}^m \bar{\mathcal{G}}_k (x_{t-k},\theta_{t-k}) + (QS)^{m+1}\pi(x_{t-m},y_{t-m}) + R_{t+1}, \nonumber
\EEA
where
\BEA
R_{t+1}:= \sum_{k=m+1}^t \bar{\mathcal{G}}_k (x_{t-k},\theta_{t-k}) + (QS)^{m+1}\left((QS)^{t-m}\pi(x_0,y_0)-\pi(x_{t-m},y_{t-m})\right),\label{intrinsicerror}
\EEA
denotes the modeling error due to finite memory assumption. Depending on the choice of learning algorithm and model (e.g., width or depth of the deep neural network), the estimator $\mathbb{E}^\epsilon[\Theta_{t+1}|\bm{z}_{t,m}]$ is usually found by minimizing the variance of $R_t\sim \Sigma$, where $\Sigma$ denotes the random variable of the error. From the learning perspective, this modeling error is usually minimized in the training phase with the hope that the corresponding generalization error will be not much larger than the training error. For the convenient of the analysis below, we first assume that the finite memory approximation holds such that $R_{t+1}=0$. 

For the convergence analysis, we require that the following assumption.

\begin{assumptions}\label{assumption2}
Let $PS$ and $QS$ be bounded linear operators defined with respect to functions $f\in C(\mathcal{X}\times\mathcal{Y})$. 
\end{assumptions}
\noindent With these assumptions, let $(x,\theta), (x',\theta')\in\mathcal{X}\times\mathcal{W}$, where $(x,\theta) = \pi(x,y)$, $(x',\theta') = \pi(x',y')$, and $y,y'\in\mathcal{Y}$. For $\pi\in C(\mathcal{X}\times\mathcal{Y})\cap \mathcal{H}$, notice that $|\bar{\Phi}_k(x,\theta)-\bar{\Phi}_k(x',\theta')| = |PS(QS)^k\pi(x,y) - PS(QS)^k\pi(x',y')|\leq |PS(QS)^k| |\pi(x,y)- \pi(x',y')| \leq C  |(x,\theta)- (x',\theta')|$, where the $|\cdot|$ denotes the appropriate uniform norms. Therefore, this technical assumption says that $\bar{\Phi}_k$ are Lipschitz continuous on $x$ and $\theta$, which is needed to bound the errors in term of $x$ and $\theta$. Notice that if $P$ and $\pi$ are both identity maps and $k=0$, this assumption is equivalent to saying $\Phi=(\mathcal{F},\mathcal{G})$ is Lipschitz in $x$ and $y$, which is assumed in both theorems in previous section. For the decomposition in \eqref{Fadditive}, where $\mathcal{F}$ is linear in $\theta$, this assumption means that $\mathcal{F}$ is also Lipschitz in the $x$.

\begin{theorem}\label{thm3}
Let ${x}_{t+1}$  be the solution of
\begin{align}
\begin{split}\label{MZtrue}
x_{t+1} &= \mathcal{F}(x_t,\theta_t), \\ \theta_{t+1} &= \bar{\mathcal{G}}_0(x_t,\theta_t) +  \sum_{k=1}^m \bar{\mathcal{G}}_k (x_{t-k},\theta_{t-k}) + (QS)^{m+1}\pi(x_{t-m},y_{t-m}) = \mathbb{E}[\Theta_{t+1}|\bm{z}_{t,m}],
\end{split}
\end{align}
where $\pi \in C(\mathcal{X}\times \mathcal{Y})\cap \mathcal{H}$ and the Assumption~2 is satisfied.
Let $\hat{x}_{t+1}$ be the solution of
\BEA
\hat{x}_{t+1} = \mathcal{F}(\hat{x}_t,\hat{\theta}_t), \quad \quad \hat{\theta}_{t+1} = \mathbb{E}^\epsilon[\Theta_{t+1}|\bm{\hat{z}}_{t,m}] +  \xi_{t+1},\label{MZapprox}
\EEA
under {the same} initial conditions $x_{-m:0}=\hat{x}_{-m:0}, \theta_{-m:0}=\hat{\theta}_{-m:0}$. In \eqref{MZapprox}, the noise amplitude $\xi_t\sim\Xi$ is estimated according to \eqref{noisebalancedmain} using the training data. Under the Assumptions~\ref{assumption1}, if $\mathbb{E}^\epsilon [\Theta_{t+1}|\cdot] \to \mathbb{E} [\Theta_{t+1}|\cdot]$ as $\epsilon\to 0$ with variance error of order $\epsilon^2$, then
\BEA
\mathbb{E}\Big[\max_{t\in \{0,\ldots,T+1\}}|\hat{x}_{t} - x_{t}|\Big]  = \mathcal{O}(a^T \epsilon)\nonumber
\EEA
where $a>1$ is a constant that is independent of $T$ and $\epsilon$. 
\end{theorem}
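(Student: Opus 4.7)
The plan is to adapt the proof of Theorem~1 to accommodate the memory structure in \eqref{MZtrue} produced by the discrete Dyson formula. I define scalar error processes $e^x_t := |\hat{x}_t - x_t|$ and $e^\theta_t := |\hat{\theta}_t - \theta_t|$ (both zero for $-m \leq t \leq 0$ by the matched initial conditions) and work with their expectations $\bar e^x_t := \mathbb{E}[e^x_t]$, $\bar e^\theta_t := \mathbb{E}[e^\theta_t]$. The structural ingredients I would harvest upfront are: (i) $\mathcal{F}$ is Lipschitz in $(x,\theta)$, Lipschitz in $x$ by Assumption~\ref{assumption2} applied to the $x$-component of $PS\pi$ (which is $\mathcal{F}$) and linear in $\theta$ via the identification \eqref{Fadditive}; (ii) each $\bar{\mathcal{G}}_k$ is Lipschitz on $\mathcal{X}\times\mathcal{W}$ by the bounded-operator argument in the paragraph preceding the theorem, applied to $PS(QS)^k$; (iii) the $L^2$ learning bound $\bigl(\mathbb{E}\bigl|\mathbb{E}^\epsilon[\Theta_{t+1}|\cdot] - \mathbb{E}[\Theta_{t+1}|\cdot]\bigr|^2\bigr)^{1/2} = O(\epsilon)$ under the ergodic measure $\mu$ given by Assumption~\ref{assumption1}; and (iv) $\mathbb{E}|\xi_{t+1}| \leq (\mathbb{E}[\Xi^2])^{1/2} = O(\epsilon)$ from \eqref{noisebalancedmain}, since the finite-memory assumption $R_{t+1}=0$ reduces the residual to precisely the learning error.

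The one-step recursions then read, pathwise, $e^x_{t+1} \leq L_\mathcal{F}(e^x_t + e^\theta_t)$, and, after a triangle inequality,
\[ e^\theta_{t+1} \leq \underbrace{\bigl|\mathbb{E}^\epsilon[\Theta_{t+1}|\bm{\hat z}_{t,m}] - \mathbb{E}[\Theta_{t+1}|\bm{\hat z}_{t,m}]\bigr|}_{(\mathrm{I})} + \underbrace{\bigl|\mathbb{E}[\Theta_{t+1}|\bm{\hat z}_{t,m}] - \mathbb{E}[\Theta_{t+1}|\bm{z}_{t,m}]\bigr|}_{(\mathrm{II})} + |\xi_{t+1}|. \]
I take expectations and bound $\mathbb{E}[(\mathrm{I})] = O(\epsilon)$ by Cauchy--Schwarz against ingredient (iii), bound $\mathbb{E}[(\mathrm{II})] \leq C \sum_{k=0}^m (\bar e^x_{t-k} + \bar e^\theta_{t-k})$ from the Lipschitz decomposition in (ii), and use (iv) for the noise. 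Writing $\bar E_t := \bar e^x_t + \bar e^\theta_t$ (with $\bar E_t = 0$ for $t\leq 0$), these combine into the linear delay inequality
\[ \bar E_{t+1} \leq L_\mathcal{F}\, \bar E_t + C \sum_{k=0}^m \bar E_{t-k} + C'\epsilon. \]

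To close the recurrence I majorize by the running maximum $\bar M_t := \max_{s\leq t} \bar E_s$, for which the delay inequality collapses to $\bar M_{t+1} \leq a \bar M_t + C'\epsilon$ with $a := L_\mathcal{F} + C(m+1) > 1$; iteration from $\bar M_0 = 0$ gives $\bar M_t \leq C'\epsilon(a^t-1)/(a-1) = O(a^t\epsilon)$. The target bound on $\mathbb{E}\bigl[\max_{t\leq T+1}|\hat x_t - x_t|\bigr]$ then follows from the pathwise majorization $\max_{t\leq T+1} e^x_t \leq \sum_{t=0}^{T+1}(e^x_t + e^\theta_t)$, Fubini, and the geometric-sum estimate $\sum_{t=0}^{T+1} a^t = O(a^T)$. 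The main obstacle is step (II): establishing that $\bm{z} \mapsto \mathbb{E}[\Theta_{t+1}|\bm{z}]$ is a Lipschitz function on $\mathcal{Z}$, despite the Dyson decomposition containing the orthogonal tail $(QS)^{m+1}\pi(x_{t-m},y_{t-m})$ that nominally depends on $y_{t-m}$ rather than $\theta_{t-m}$. The finite-memory assumption $R_{t+1}=0$ is precisely what promotes the entire sum to a genuine function of $\bm{z}_{t,m}$, and the required modulus of continuity has to be extracted from the operator-norm bound on $(QS)^{m+1}$ supplied by Assumption~\ref{assumption2} together with the continuity of $\pi$. A secondary technical point is that the $L^2$ learning bound in (iii) is stated under $\mu$ while the input $\bm{\hat z}_{t,m}$ lives on the perturbed trajectory; I would handle this by a bootstrap in which the inductive hypothesis $\bar E_s = O(a^t\epsilon)$ for $s\leq t$ controls the perturbation of the input distribution from $\mu$, leaving the $O(\epsilon)$ rate unchanged.
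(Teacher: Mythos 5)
Your proposal is correct and follows essentially the same route as the paper's Appendix~C: the same triangle-inequality split into learning error, Lipschitz propagation of the lagged errors via Assumption~2, and the noise term, followed by a delayed linear recursion closed with a discrete Gronwall-type iteration (your running-maximum majorization plays the role of the paper's Lemma~1). The only cosmetic difference is that you control $\mathbb{E}[\max_t |\hat{x}_t-x_t|]$ at the end via the crude bound $\max\leq\sum$ and a geometric sum, whereas the paper tracks the expectation of the running maximum throughout and invokes the Burkholder--Davis--Gundy inequality; both land on the same $\mathcal{O}(a^T\epsilon)$ rate, and the subtleties you flag about the orthogonal term and the pushforward measure are handled (somewhat tersely) in the paper exactly as you propose.
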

\begin{proof}
See Appendix~\ref{app:C}.
\end{proof}

We should point out that if the underlying dynamic solves the full Mori-Zwanzig equation in \eqref{fulldyn3} and the approximate dynamic in \eqref{MZapprox} commits modeling errors $R_{t+1}$ with error variance of order $\epsilon_{m}^2$, then the error bound above becomes $\mathcal{O}(a^T(\epsilon+\epsilon_m))$. The only change in the proof in Appendix~\ref{app:C} is that the Eqn.~\ref{thm1theta} has an additional order-$\epsilon_m$ term due to the model error.

For dynamical systems driven by stochastic noises, one can rewrite the full dynamics as an autonomous dynamical system by augmenting $(x_n,y_n)$ with the entire history of the noises. See \cite{kunita1997stochastic} for the details or the Appendix of \cite{lin2019data} for the key idea. Subsequently, one can apply the Dyson formula to the resulting autonomous dynamics, defined on appropriate state space that includes $\mathcal{X}\times\mathcal{Y}$ and the space of the history of the noises, and derive an analogous representation as in \eqref{fulldyn3}. We suspect that the result is not different from that in Theorem~\ref{thm3} and thus we will not pursue this derivation.

Again, this error rate is rather loose with unknown coefficients $a>1$ since no other assumptions are included in $\mathcal{F}, \mathcal{G}$. One might achieve an improved error rate by analyzing the eigenvalue problem corresponding to the autoregressive model of order-$m$, which bounds the dynamical equation for the errors between \eqref{fulldyn3} and \eqref{approxdyn}. Another plausible approach is to consider a class of dynamical systems with spatially short-range interaction as studied in \cite{chen2019spatial}, which will require further investigation.

\section{Deep Learning via Long-Short-Term-Memory}\label{lstm}

As a nonlinear type parametric regression method, deep learning outperforms kernel methods including the RKHS approach in terms of generalization error when the target functions are sufficiently smooth \cite{Weinan2018}. Though it is theoretically unclear whether there are any advantages of using deep learning over other nonparametric regression methods for general continuous functions in terms of overcoming the curse of dimension \cite{barron1993,Hadrien,HadrienYangDu,HadrienYang}, deep learning has practical advantages over the RKHS approaches. A significant challenge with the RKHS approximation in \eqref{RKHS} is that there is no a priori guideline for choosing the appropriate hypothesis space. If the orthogonal basis is used, it is practically difficult to even construct these basis functions on very high-dimensional variables $\bm{z}\in\mathcal{Z}$. On the other hand, if arbitrary radial functions are used as a basis, the evaluation of the resulting model on a new point $\bm{z}$ requires evaluating the basis functions on $\|\bm{z}-\bm{z}_{t,m}\|$ for all training data $t=1,\ldots, N$, predicting with \eqref{approxdyn} becomes too costly since we need to evaluate the conditional expectation in \eqref{RKHS} on a new point in each iteration. In contrast, deep learning as a nonlinear parametric regression method is not hampered by these issues, since it is practically just a nonlinear interpolation technique using a composition of nonlinear activation functions and linear transforms. Of course, the main issue with nonlinear regression is whether one can obtain the minimum on such a non-convex optimization problem in the training phase. Recent advances in optimization theory show that simple gradient descent can identify a local minimizer with an arbitrarily small loss and a generalization error without the curse of dimensionality when the network size is sufficiently large for classification problems \cite{DBLP:journals/corr/abs-1902-01384}.  Though there is no existing optimization theory that guarantees good local minimizers in general settings, motivated by many positive numerical results shown in other closure modeling approaches \cite{vlachas2018data,maulik2019time}, we will consider realizing the closure model in \eqref{approxdyn} using recurrent neural networks.

As a special case of recurrent neural networks, Long-Short-Term-Memory (LSTM) is capable of learning multi-scale temporal effects and hence is adopted in our method. The computational flow of the LSTM consists of a sequence of computational cells, each of which is
\begin{eqnarray*}
f_t &=& \sigma\circ NN (h_{t-1},\bm{z}_t;W_f),\quad i_t=\sigma\circ NN (h_{t-1},\bm{z}_t,W_t)  \\
o_t &=& \sigma\circ NN (h_{t-1},\bm{z}_t;W_o),\quad \tilde{C}_t=\text{tanh}(NN(h_{t-1},\bm{z}_t;W_T)),\\
C_t &=& f_t\otimes C_{t-1}+i_t\otimes \tilde{C}_t,\quad h_t=o_t\otimes \text{tanh}(C_t),
\end{eqnarray*}
where $\sigma$ denotes the sigmoid function, $\otimes$ is the pointwise product, and $NN$ denotes a fully connected network which stacks layers of linear transformation and nonlinear activation function. See Fig.~\ref{fig:lstmcell} (left) for an illustration of an LSTM cell. For simplicity, let us denote the above flow as $(h_t,C_t)=\mathcal{P}(\bm{z}_t,h_{t-1},C_{t-1};W)$ with parameters $W$, inputs $(\bm{z}_t,h_{t-1},C_{t-1})$, and outputs $(h_t,C_t)$. LSTM cells can be applied compositionally and we denote the LSTM sequence with $m+1$ cells as $(h_{m+1},C_{m+1})=\mathcal{P}_m(\{\bm{z}_t\}_{t=1}^{m+1},h_0,C_0;W)$ (see Fig.~\ref{fig:lstmcell} (right) for an illustration).

\begin{figure*}
  \centering
  \begin{tabular}{cc}
  \includegraphics[width=0.3\textwidth]{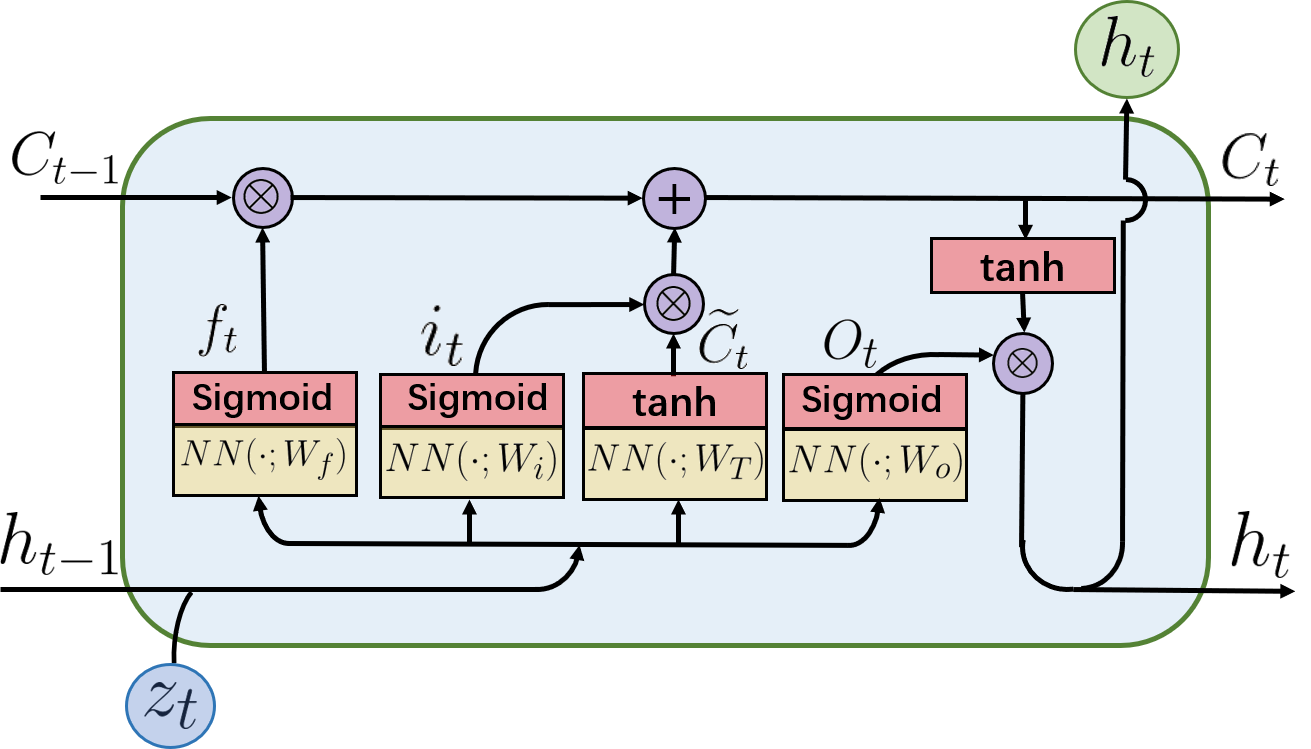} & \includegraphics[width=0.5\textwidth]{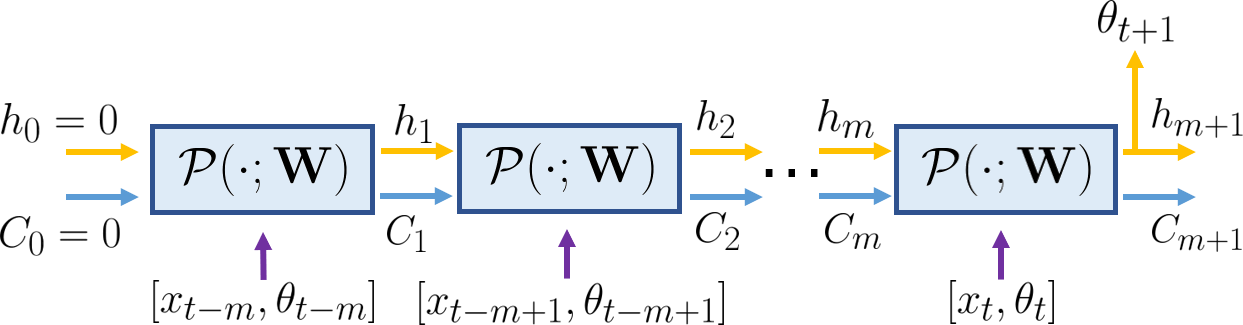}
  \end{tabular}
  \caption{Left: the basic computational flow of a LSTM recurrence.  $+$ and $\bigotimes$ are element-wise addition and multiplication respectively. Right: a sequence of LSTM cells applied compositionally. }\label{fig:lstmcell}
 \end{figure*}

Now let us apply the LSTM to approximate the closure model in \eqref{approxdyn}
with the given training data $\{{\bm{z}}_{t,m},{\theta}_t\}$, where ${\bm{z}}_{t,m}:=({\bm{x}}_{t-m:t},{\bm{\theta}}_{t-m:t})\in \mathcal{Z}$.  We train an $(m+1)$-cell LSTM $(h_{m+1},C_{m+1})=\mathcal{P}_m({\bm{z}}_{t,m},h_0,C_0;W)$ with an input in the $j$-th cell as $({x}_{t-m+j-1},{{\theta}}_{t-m+j-1})$ such that $h_{m+1}$ predicts $\theta_{t+1}$ well. The parameters $h_0$ and $C_0$ are set to be $0$ for simplicity and $W$ is identified via minimizing a mean squared error (MSE) function specified below. In what follows, we adopt the notation $h_{m+1}=\mathcal{P}_m({\bm{z}}_{t,m};W)$ for simplicity. Define the MSE loss as
\begin{equation}\label{eqn:lossW}
\mathcal{L}(W):=\frac{1}{N-m-1}\sum_{s=m+1}^{N-1}(\mathcal{P}_m({\bm{z}}_{s,m};W) - {\theta}_{s+1})^2,
\end{equation}
i.e., we aim to identify a predictor $\mathcal{P}_m({\bm{z}}_{s,m};W) $ such that it can predict the $(s+1)$-th sample in the time series given $(m+1)$ preceding samples. Minimizing \eqref{eqn:lossW}  can be achieved efficiently via a mini-batch stochastic gradient descent (SGD) and backpropagation through time (BPTT) \cite{Mozer:1995:FBA:201784.201791,robinson:utility,WERBOS1988339}. Though the global minimizer of the above highly non-convex optimization might not be available, empirically numerical results \cite{Pascanu:2013} and partial theoretical analysis show that gradient-based algorithms are able to provide a minimizer $W^*$ with a reasonably good generalization capacity in the case of over-parametrized networks \cite{RNNconvergence,DBLP:journals/corr/abs-1902-01028,chen2019on}. Once $W^*$ has been identified, ${\theta}_{t+1}=\mathcal{P}_m({\bm{z}}_{t,m};W^*)$ is applied instead of the conditional expectation in \eqref{approxdyn}. \vspace{4pt}

The computational cost of the proposed framework mainly consists of two parts: training and evaluation. In the training part, suppose the SGD with a batch size $K$ is applied to minimize the loss in \eqref{eqn:lossW}, then the computational cost for evaluating an approximate loss and gradient via BPTT is essentially $O(Km)$ matrix-vector multiplications with a matrix of size $d_L\times d_L$, where $d_L$ is the dimension of the hidden layer of $NN$. According to the approximation theory of DNNs \cite{DBLP:journals/corr/abs-1709-02540,ShenYangZhang2}, $d_L$ is required to be larger than $d$; based on the optimization analysis of DNNs \cite{DBLP:journals/corr/abs-1710-10174,DBLP:journals/corr/abs-1811-04918,DBLP:journals/corr/abs-1902-01384}, a larger $d_L$ admits a simpler optimization problem in deep learning. In the numerical examples shown below, we empirically set $d_L=500$ for problems with dimensions $d=40,80$, and set $d_L=1000$ for a problem that involves $d=480$. Hence, the total computational cost is $O(Kmd_L^2)$ for one iteration in the SGD, which has been parallelized via GPU computing in standard machine learning packages. Recent theoretical analysis shows that the convergence of SGD is linear under mild conditions \cite{RNNconvergence}. Therefore, the upper bound of the iteration number in the SGD to guarantee a loss less than $\epsilon$ is $O(\log(\frac{1}{\epsilon}))$. In our numerical examples below, the total numbers of iterations required for accurate performance are $4\times 10^3$ for the first problem and $4\times10^4$ for the last two problems. In the evaluation part, the cost estimation for predicting one data sample with $m$ pairs of historical data is $O(md_L^2)$, which has been parallelized via GPU.

\section{Numerical Examples}\label{numerics}

In this section, we numerically demonstrate the effectiveness of our proposed closure framework on severely truncated dynamical systems in three prototypical applications. First, the topographic mean flow interaction that mimics the blocked and unblocked patterns observed in the atmosphere \cite{carnevale1987nonlinear,grote1999dynamic,mw:06,franzke2009systematic} is considered. In our test, we will use the stochastic version of the 57-mode model studied in \cite{qi2017low}. Second, the nonlinear Schr\"{o}dinger equation that finds many applications in optics and Bose-Einstein-Condensate (see the references in \cite{kevrekidis2016solitons}) is studied. Finally, the Kuramoto-Shivashinsky equation with a spatiotemporal chaotic pattern formation with applications in trapped ion modes in plasma \cite{laquey1975nonlinear} and phase dynamics in reaction-diffusion systems \cite{10.1143/PTP.55.356}. We shall see that the closure models in these three examples progressively involve approximations of functions of dimensions 40 to 480.

\subsection{Topographic mean flow interaction}

We consider the topographic mean flow interaction that solves a barotropic quasi-geostrophic equation with a large-scale zonal mean flow $u(t)$ on a
two-dimensional $2\pi \times 2\pi $ periodic domain, formulated as in \cite{qi2017low}:
\begin{align}
\begin{split}
\label{Eqn:topo57_1}
\frac{du}{dt} &+\fint\frac{\partial h}{\partial x}\psi  =-%
\bar{d}u+\sigma\mu^{-1/2}\dot{W}_{0},\\
\frac{\partial \omega }{\partial t} &+\nabla ^{\perp }\psi \cdot \nabla
q+u \frac{\partial q}{\partial x}+\beta \frac{\partial \psi }{
\partial x}  =-\mathcal{D} \psi +\Sigma
\dot{W}.
\end{split}
\end{align}
Here, $q=\omega +h$ is the small-scale potential vorticity which is advected
by the velocity field $\mathbf{v}=\nabla ^{\perp }\psi \equiv \left(
-\partial _{y}\psi ,\partial _{x}\psi \right) $; $\omega =\Delta \psi $ and $%
\psi $ are the relative potential vorticity and the stream function,
respectively; $h\left( \mathbf{x}\right) =h\left( x,y\right) $ is the
topography. The parameter $\beta $ is associated with the $\beta $-plane
approximation to the Coriolis force. The integral in \eqref{Eqn:topo57_1}
 is a two-dimensional integral over a periodic box of $[-\pi ,\pi ]\times
\lbrack -\pi ,\pi ]$. On the right hand side of \eqref{Eqn:topo57_1}, the dissipation and forcing operators are applied
on both the small and the large scales. On the small scale, the dissipation operator
is in the form of $\mathcal{D}=-\bar{d}\Delta $ with $\bar{d}\geq
0 $ and $\Delta$ the Laplace operator  corresponding to the Ekman drag dissipation. On the large scale, operator $-\bar{d}u$ represents the momentum
damping. The forcing terms are represented by random Gaussian white noises
(e.g. unresolved baroclinic instability processes on small scales, random
wind stress, etc), where $W\left( t\right) $ and $W_{0}\left( t\right) $ are
standard Wiener processes; $\sigma\mu^{-1/2}>0$ is a constant amplitude and $\Sigma$ is spatially dependent.

Following \cite{mw:06,franzke2009systematic,qi2017low}, we construct a set of special solutions to \eqref{Eqn:topo57_1}, which inherit the nonlinear coupling of the
small-scale vortical modes with the large-scale mean flow via topographic
stress. Consider the truncated spectral expansion of the state variables for
${\psi }$ and $\omega $ with high wavenumber truncation $1\leq \left\vert
\mathbf{k}\right\vert \leq K$ using standard Fourier basis $\exp (i\mathbf{k}%
\cdot \mathbf{x})$ with $\mathbf{k}=\left( k_{x},k_{y}\right) $. We can
rewrite \eqref{Eqn:topo57_1} for the large-scale mean flow $u\left(
t\right) $\ in a truncated Fourier form, as:
\begin{equation}
\begin{split}
\frac{du}{dt}& =i\sum_{1\leq \left\vert \mathbf{k}\right\vert \leq K}\frac{%
k_{x}}{\left\vert \mathbf{k}\right\vert ^{2}}\hat{h}_{\mathbf{k}}^{\ast
}\omega _{\mathbf{k}}-d\left( u-u_{eq}\right) +\sigma \mu ^{-1/2}\dot{W}_{t}, \\
\frac{d\omega _{\mathbf{k}}}{dt}& =\mathcal{P}_{K,\mathbf{k}}\left( \nabla
^{\perp }\psi _{N}\cdot \nabla q_{N}\right) +ik_{x}\left( \frac{\beta }{%
\left\vert \mathbf{k}\right\vert ^{2}}-u\right) \omega _{\mathbf{k}}-ik_{x}%
\hat{h}_{\mathbf{k}}u -d\left( \omega _{\mathbf{k}}-\omega _{eq,\mathbf{k}}\right) +\sigma _{%
\mathbf{k}}\dot{W}_{\mathbf{k},t},\text{ \ \ }1\leq \left\vert \mathbf{k}%
\right\vert \leq K,
\end{split}
\label{Eqn:u_k}
\end{equation}
Here, $\hat{h}_{\mathbf{k}}$ and $\omega _{\mathbf{k}}\ $ are the Fourier
transform of the topography $h\left( \mathbf{x}\right) $ and the relative
potential vorticity $\omega $, respectively; $u_{eq}=-\beta /\mu $ is the
equilibrium mean of $u\left( t\right) $; $\omega _{eq,\mathbf{k}}=-\left\vert \mathbf{k}\right\vert ^{2}\hat{h}%
_{\mathbf{k}}/\left( \mu +\left\vert \mathbf{k}\right\vert ^{2}\right) $ is
the mean relative vorticity; $\sigma _{\mathbf{k}}=\sigma \left( 1+\mu
\left\vert \mathbf{k}\right\vert ^{-2}\right) ^{-1/2}$\ is the forcing
strength for each mode $\mathbf{k}$. The parameter $\sigma $ is chosen
such that $\sigma _{eq}^{2}=\frac{\sigma ^{2}}{2\bar{d}}=1$. The parameters $%
\beta =1$ and $\mu =2$ are fixed in our simulation.


In our implementation, we consider the ground truth as the solution corresponding to the truncation $1\leq \left\vert \mathbf{k%
}\right\vert \leq K$ with $K=17$ such that there are $57$ degrees of freedom for integers $\mathbf{k}=\left( k_{x},k_{y}\right) $. In this
topographic $57$-mode model, we use the standard 4th order Runge-Kutta
method for the time integration up to $5\times 10^7$ time iterations with a time step
$\delta t=2.5$E$-3$, which is small enough to capture the small-scale
dynamics. For the nonlinear advection term, $%
\mathcal{P}_{K,\mathbf{k}}\left( \nabla ^{\perp }\psi _{K}\cdot \nabla
q_{K}\right) $, the 2/3 rule is applied for de-aliasing \cite{qi2017low}.
The noise is added using the standard Euler-Maruyama scheme.
Here, the initial condition, $\psi (\bm{x},0)$, is a sample of Gaussian distribution with random phases and amplitudes consistent with the
ensemble mean and enstrophy as in  \cite{mtv:03}. The observed data are recorded at every $20$ time steps, that
is, we observe the data at every $\Delta=0.05$ time unit. Taking half of this data set for training, $N=1.25\times 10^6$ samples.
For the topography $h\left( \mathbf{x}\right) $, we use a simple layered
topography with variation only in the $x$-direction, $h\left( \mathbf{x}\right)
=H\left( \cos \left( x\right) +\sin \left( x\right) \right) ,$ where $H$
denotes the topography amplitude.


We now present the closure model for the large-scale mean flow $u\left(
t\right) $ in Eq. (\ref{Eqn:u_k}). The application of the
Euler-Maruyama scheme for the large-scale mean flow $%
\hat{u}\left( t\right) $ gives%
\begin{align}
\begin{split}
\hat{u}_{t+1} &=\hat{u}_{t}+\Delta \, \hat\theta_{t}-\Delta \bar{d}\left( \hat{u}_{t}-u_{eq}\right) +\sqrt{%
\Delta}\sigma \mu ^{-1/2} \eta_{t+1},  \label{Eqn:topo_closure} \\
\hat\theta_{t+1} &=\mathbb{E}^\epsilon\Big[ \Theta_{t+1}|\hat{u}_{t-m:t},\hat\theta_{t-m:t}\Big] + \xi_{t+1},
\end{split}
\end{align}%
where the time step $\Delta =0.05$ and $\hat\theta_t$ is an estimator of the identifiable unresolved variable. In this case, $\theta = \theta(\omega _{\mathbf{k}}) =i\sum_{1\leq \left\vert \mathbf{k}\right\vert \leq K}\frac{k_{x}}{\left\vert \mathbf{k}\right\vert ^{2}}\hat{h}_{\mathbf{k}}^{\ast }\omega _{\mathbf{k}}$, is a function of the unresolved variables alone. The noises $\eta_t$ are i.i.d. standard Gaussian while the noises $\xi_t$ are Gaussian with mean zero and variance $\Xi$ determined from the training residual using Eq.~\eqref{noisebalancedmain}. We will approximate the conditional expectation $\mathbb{E}^\epsilon$ in \eqref{Eqn:topo_closure} with the LSTM model with $m=19$, which involves an approximation of a forty dimensional function. 
We should point out that the results become inaccurate when $m$ is too small. Taking the efficiency of computation into consideration, we empirically found that $m=19$ is a convenient choice. In fact, in all LSTM examples in the remaining of this paper, we use this same number of $m$ on the purpose of showing that our proposed framework is not sensitive to $m$. We will also include an experiment mimicking the existing approach in \cite{ma2018model,vlachas2018data,maulik2019time}, where the conditional expectation in \eqref{Eqn:topo_closure} is replaced with $\mathbb{E}^\epsilon\Big[ \Theta_{t+1}|\hat{u}_{t-m_u:t}]$, a function that depends only on the memory of the resolved scale with memory length $m_u = 19$. In this case, the conditional expectation is a twenty dimensional function. In addition, we also report the RKHS approximation to \eqref{Eqn:topo_closure} with $m=2$, which involves only an approximation of a six-dimensional function $(\hat{u}_{t-2:t},\hat\theta_{t-2:t})\mapsto\mathbb{E}^\epsilon[\Theta_{t+1}|\hat{u}_{t-2:t},\hat\theta_{t-2:t}]$. Here, we use a tensor product of Hermite polynomials to represent this six dimensional function. The curse of dimensionality makes the RKHS model with orthogonal polynomials prohibitive in higher dimensions. 

To compare the pathwise trajectories for short-time forecasting in the verification phase, we need to drive the closure model in \eqref{Eqn:topo_closure} with the appropriate realization of noises $\xi_t$ and $\eta_t$, that respects the realization of the noises, $\dot{W}_{t+1}, \dot{W}_{\mathbf{k},t}$ of \eqref{Eqn:u_k}, which drive the verification trajectory. Since the closure model depends on the identifiable unresolved variable, $\theta$, one can, in principle, account for the realization of the noise $\sigma_k\dot{W}_{\mathbf{k},t}$ corresponding to the verification trajectory by first mapping it to $\mathcal{W}$-space via the linear mapping $\theta(\cdot)$ (defined short after \eqref{Eqn:topo_closure}) and use it as a realization of $\xi_t$. Based on our numerical inspection, we found that the variance of this noise (in $\mathcal{W}$-space) is on the order of $10^{-8}$, whereas the variance of $\theta$ is on the order of $10^{-2}$, so there is a large signal-to-noise ratio in the true $\theta$-dynamics. 
With such a distinct signal-to-noise in the true $\theta$-dynamics, one can expect that the trajectory of $u$ can be recovered up to some accuracy even if this noise realization is neglected in the closure model so long as the conditional expectation model in \eqref{Eqn:topo_closure} can accurately approximate the deterministic part of the unresolved dynamics. Furthermore, we found that the residuals in the training phase are small (the variances, $\Xi$, are on the order of $10^{-5}$) in most of the dynamical regimes that we tested. Based on these observations, we will show numerical results obtained without additional noise $\xi_t$ in \eqref{Eqn:topo_closure}, to verify the prediction skill of the conditional expectation model alone. We should note that in separate experiments (not reported here), we found that the differences between the results without and with noise, $\xi_t$, where the latter uses a completely random realization, are negligible.

As for the stochastic noise, $\dot{W}_t$, corresponds to the resolved dynamics in \eqref{Eqn:u_k}, we notice that the full model and the closure
models are integrated with different time steps. The full model is
integrated with a relatively small time step $\delta t=2.5$E$-3$ in order to
resolve all the small-scale vorticity modes. Nevertheless, the closure
models are integrated with a relatively large time step $\Delta =0.05$ for
resolving only the large-scale mean flow $u\left( t\right) $. To compare the
pathwise trajectories, we first generate a realization of the noise $\dot{W}%
_{t+1}$ from identifiable variables using finite difference method
\begin{equation*}
\eta_{t+1}=\frac{u_{t+1}-\left[ u_{t}+\Delta \theta _{t}-\Delta d\left(
u_{t}-u_{eq}\right) \right] }{\sqrt{\Delta }\sigma \mu ^{-1/2}},
\end{equation*}%
where $u_{t}$ and $\theta _{t}$\ are the identifiable variables from the
dataset for verification of the full model observed with time interval $%
\Delta =0.05$. Using such realization of $\dot{W}_{t+1}$ for the noise $\eta
_{t+1}$ in \eqref{Eqn:topo_closure}, we can now compare the path of the closure model
with $\Delta =0.05$\ to the path of the true trajectory, starting at the same initial condition.

In Fig.~\ref{Fig2_topoLongtime}, we present the short-time predictions of $u$ in three regimes: weak coupling ($H=3\sqrt{2}/4$, $\bar{d}=0.5$), intermediate coupling ($H=5\sqrt{2}/4, \bar{d}=0.1$), and strong coupling ($H=7\sqrt{2}/4$, $\bar{d}=0.1$). These three regimes were considered in \cite{qi2017low}. We would like to emphasize that we verify the closure model in \eqref{Eqn:topo_closure} using initial conditions not in the training data set. In each regime, we compare the trajectories of the full model and the three closure models discussed above. In the weak coupling regime, one can see that the short-time predictions are all excellent among three closure models. For the other two regimes, the LSTM without the unresolved variables ($m_u=19$) proposed in \cite{ma2018model,vlachas2018data,maulik2019time} produces the worst prediction, which justifies the importance of considering both the resolved and identifiable unresolved variables in the closure model proposed in this paper. Surprisingly, the RKHS prediction is quite accurate considering that it requires less computational effort relative to LSTM. As we discussed before, given the large signal-to-noise ratio in the $\theta$-dynamics, the reasonably accurate trajectory recovery of the closure model in \eqref{Eqn:topo_closure} without incorporating the noise realization $\sigma_k\dot{W}_{\mathbf{k},t}$ suggests that the conditional expectation model alone has captured the bulk of the deterministic part of the unresolved dynamics.

In Fig. \ref{Fig3_topoLongtime}, we show the comparison of the equilibrium density and Auto-Correlation Functions (ACFs) of the full model and three prediction methods. The auto-correlation function (ACF) for the large-scale mean flow $u$
is calculated as $\left\langle U_{t}U_{0}\right\rangle / \left\langle
U_{0}U_{0}\right\rangle $, where $U_{t}=u_{t}-\left\langle
u_{t}\right\rangle $ with $\left\langle \cdot \right\rangle $ being the
temporal average over $1.25\times 10^{6}$ data for verification, which is
different from the $N=1.25\times 10^{6}$ training dataset. The probability
density function (PDF) for $u$ is obtained from the same verification
dataset using the kernel density estimation (KDE) method.
For the long-time statistics, both the LSTM methods provide a better approximation than the RKHS model. This is
because not enough memory terms are used in the RKHS model. In the strong coupling regime, one can see that the LSTM method with $m=19$ is the best
approximation. This is because the variance of the training residual is
small about $10^{-5}$ for the LSTM with $m=19$ and the variance is
relatively large about $10^{-1}$ for the LSTM with $m_{u}=19$. This result confirms the robustness
of our framework with a closure model that depends on, both, the memories of the resolved and identifiable unresolved variables.


\begin{figure}
\minipage{1\textwidth}
\minipage{0.3333\textwidth}
\subcaption{Weak coupling}
\vspace{-5pt}
\includegraphics[width=\textwidth,height=5cm]{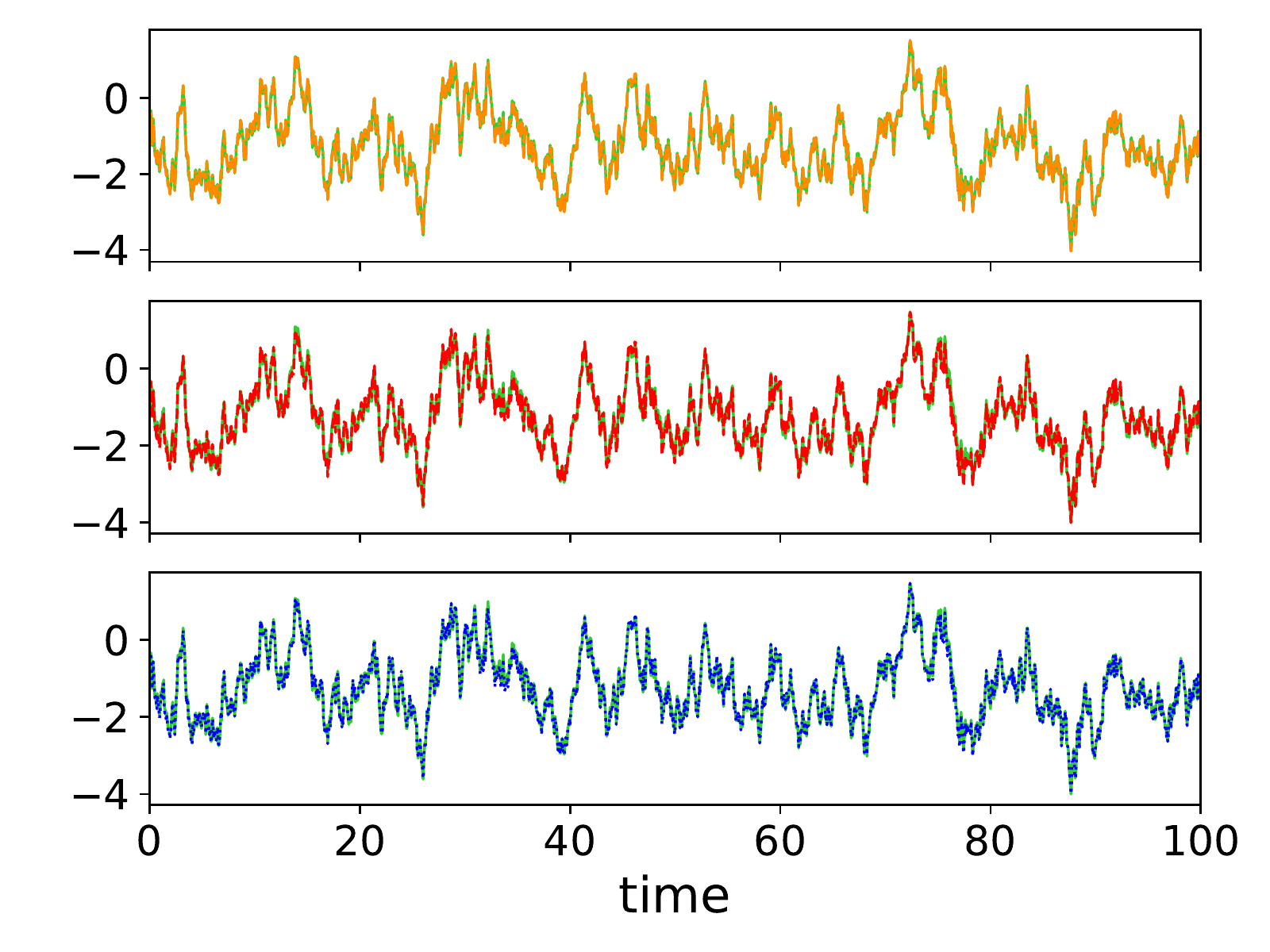}
\endminipage\hfill
\minipage{0.3333\textwidth}
\subcaption{Intermediate coupling}
\vspace{-5pt}
\includegraphics[width=\textwidth,height=5cm]{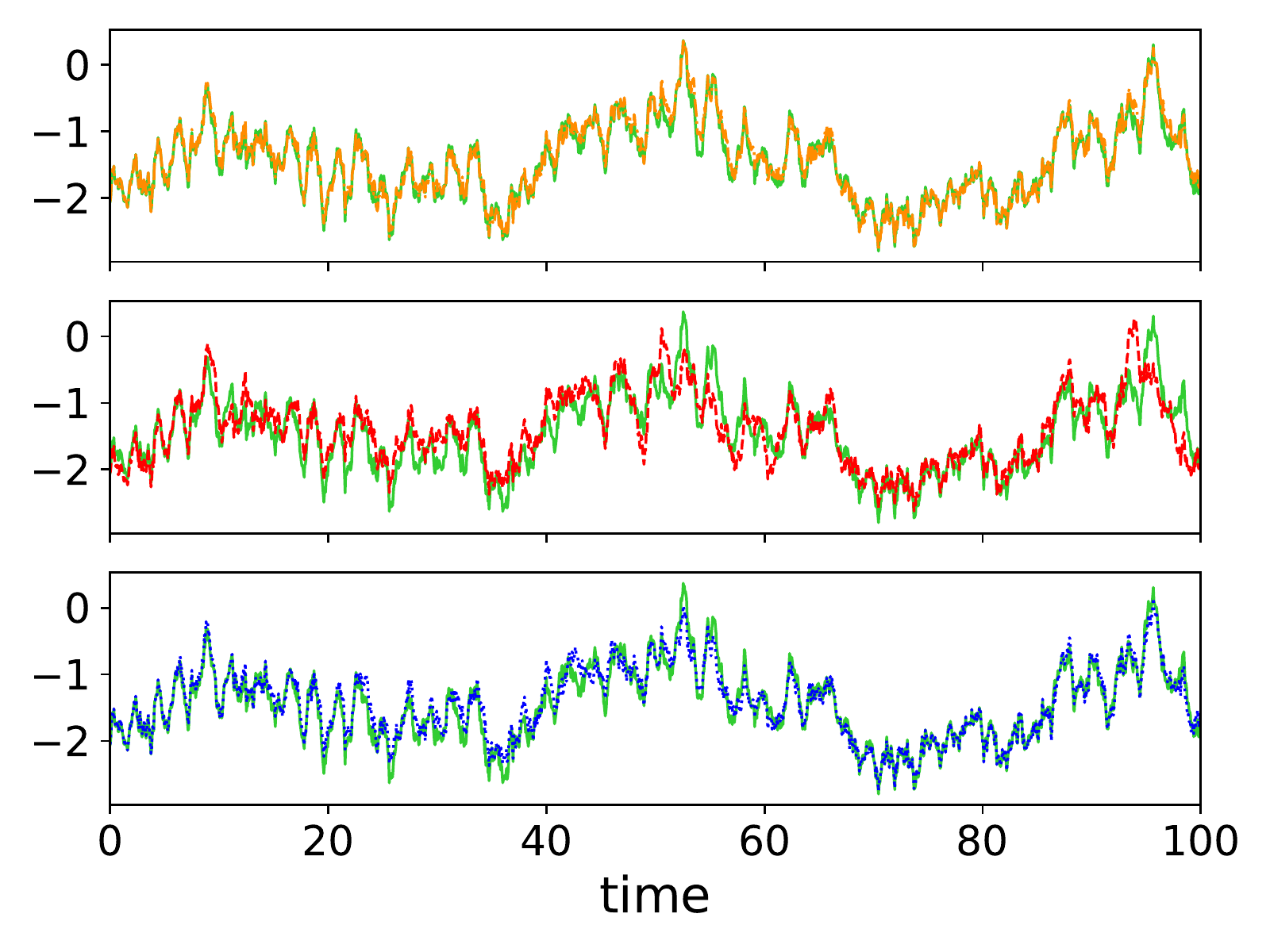}
\endminipage\hfill
\minipage{0.3333\textwidth}
\subcaption{Strong coupling}
\vspace{-5pt}
\includegraphics[width=\textwidth,height=5cm]{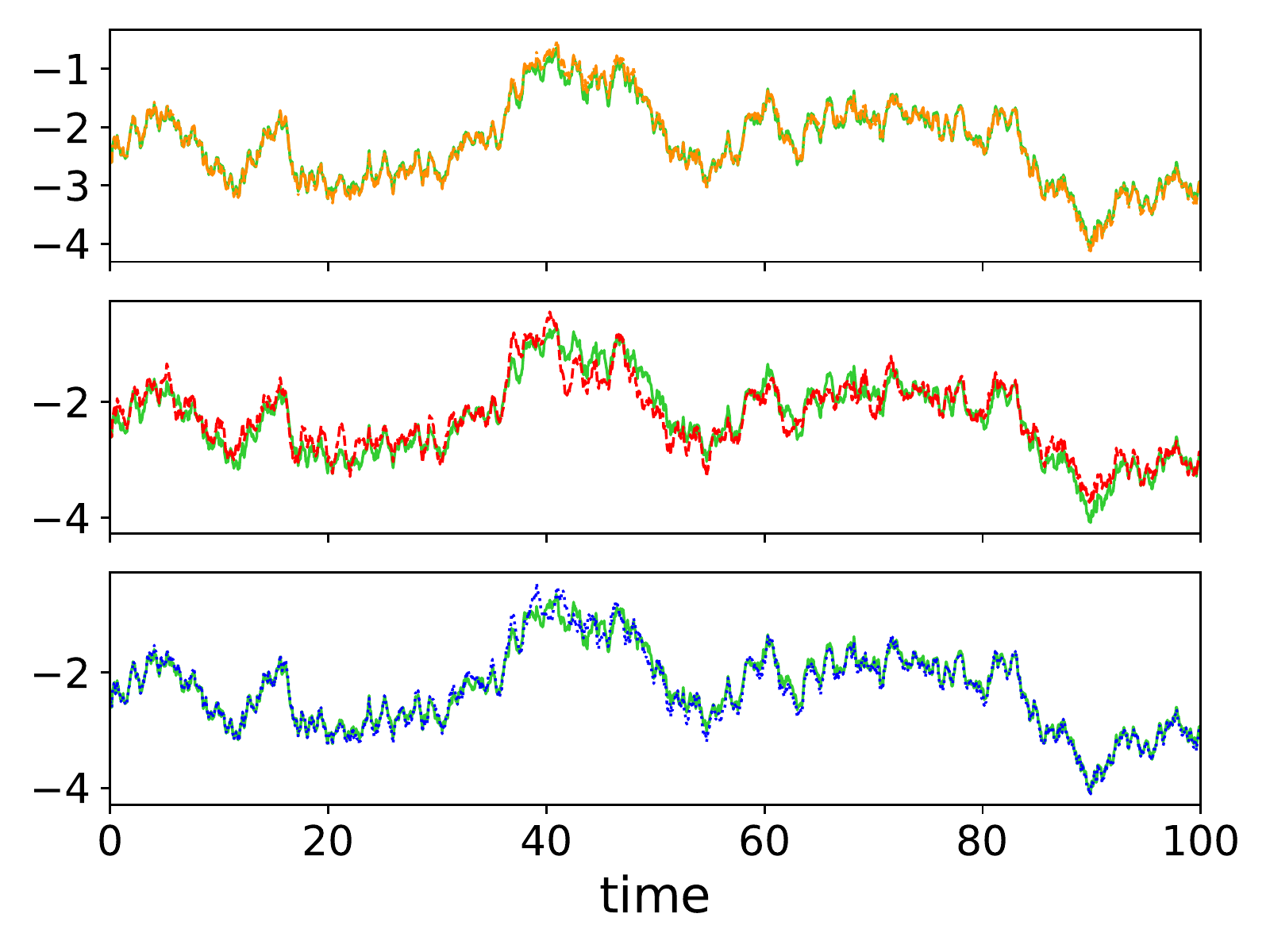}
\endminipage
\endminipage\hfill
\caption{Comparison of trajectories between the full and closure models for the
mean velocity $u$. The true trajectory (green solid) in all panels; RKHS $m=2$ (yellow dash-dotted line) in the first row; LSTM $m_u=19$ (red dashes) in the second row; LSTM $m=19$ (blue dotted line) in the third row.}
\label{Fig2_topoLongtime}
\end{figure}

\begin{figure}
\minipage{1\textwidth}
\minipage{0.3333\textwidth}
\subcaption{Weak coupling}
\vspace{-5pt}
\includegraphics[width=\textwidth,height=5cm]{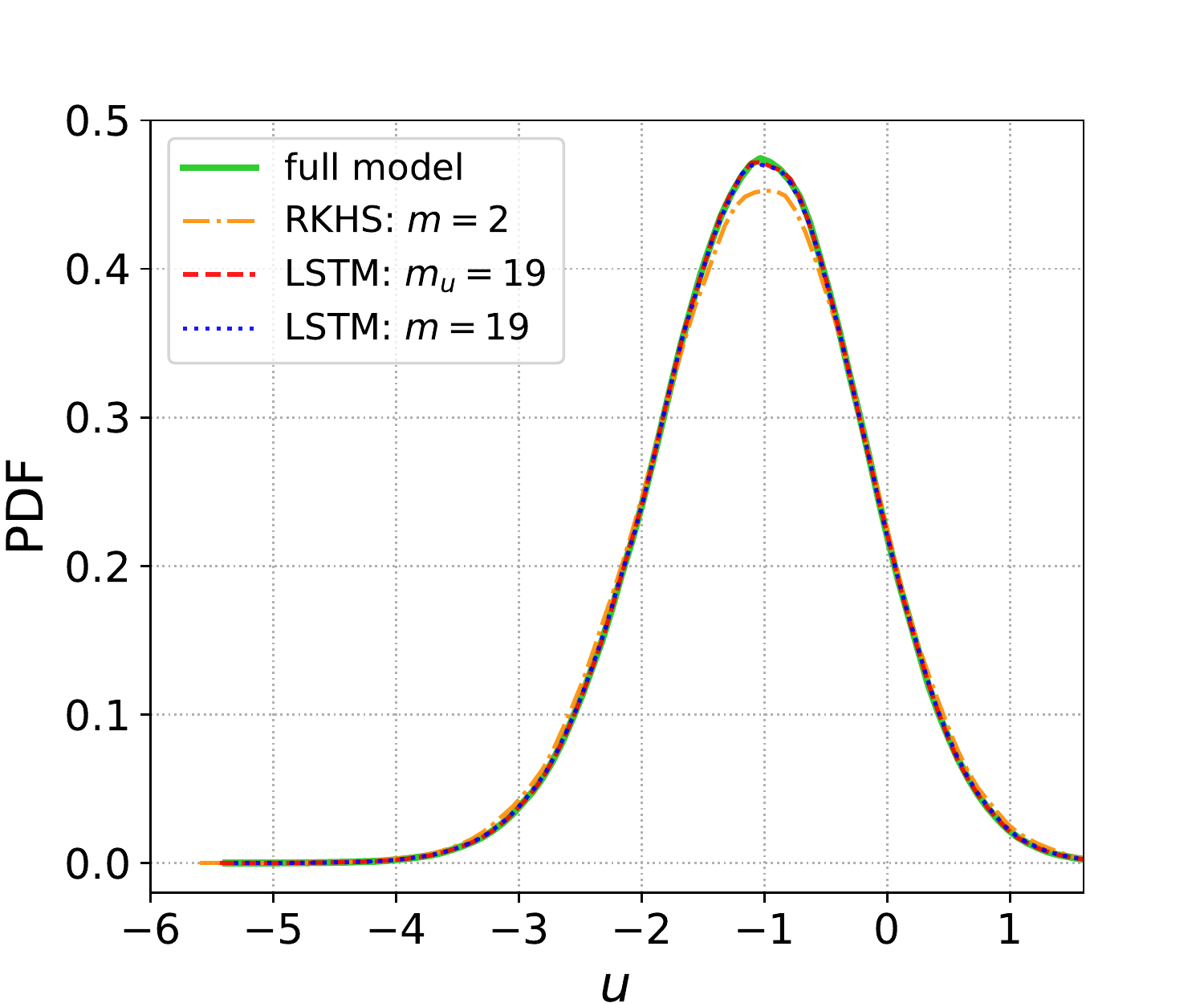}
\endminipage\hfill
\minipage{0.3333\textwidth}
\subcaption{Intermediate coupling}
\vspace{-5pt}
\includegraphics[width=\textwidth,height=5cm]{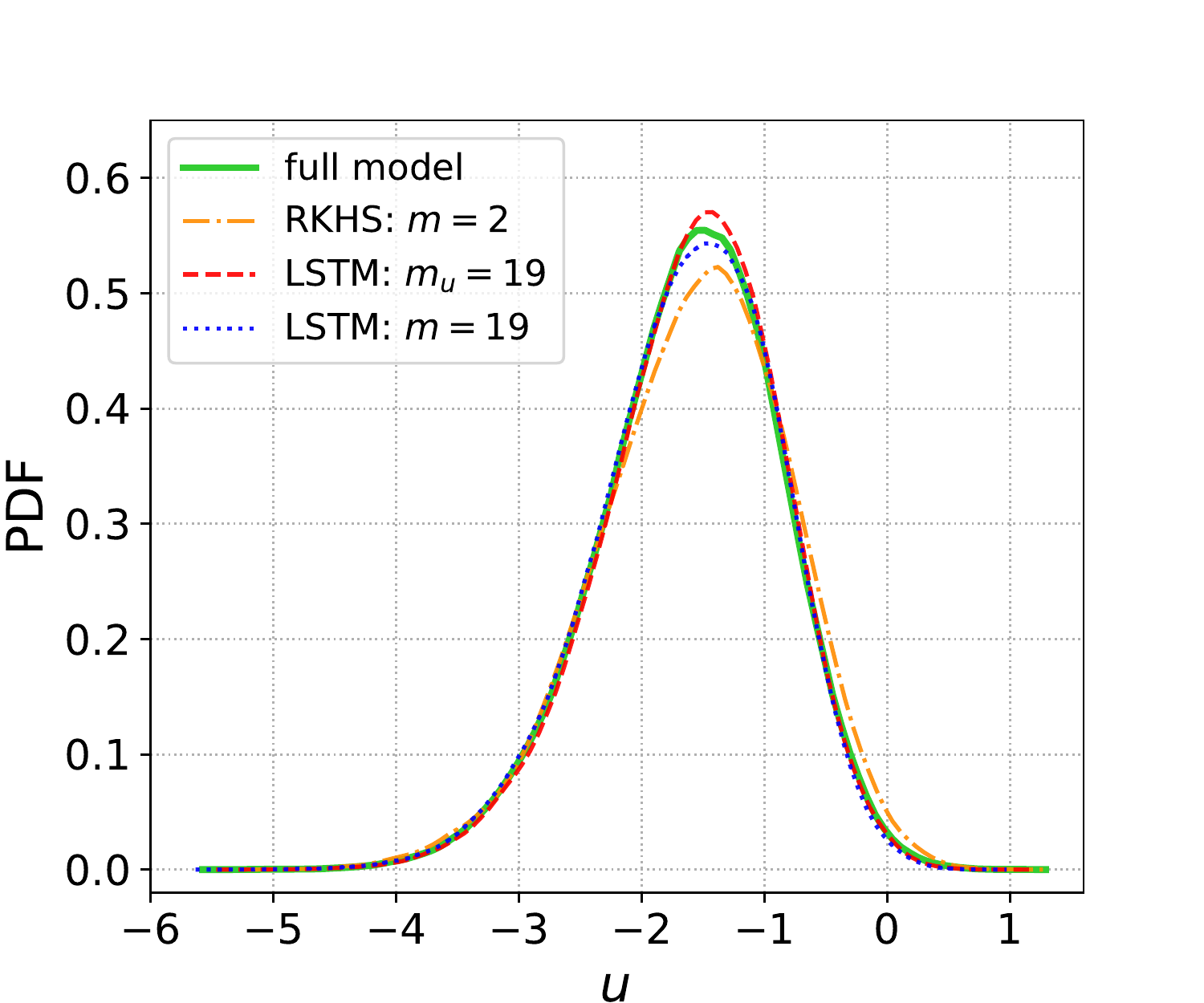}
\endminipage\hfill
\minipage{0.3333\textwidth}
\subcaption{Strong coupling}
\vspace{-5pt}
\includegraphics[width=\textwidth,height=5cm]{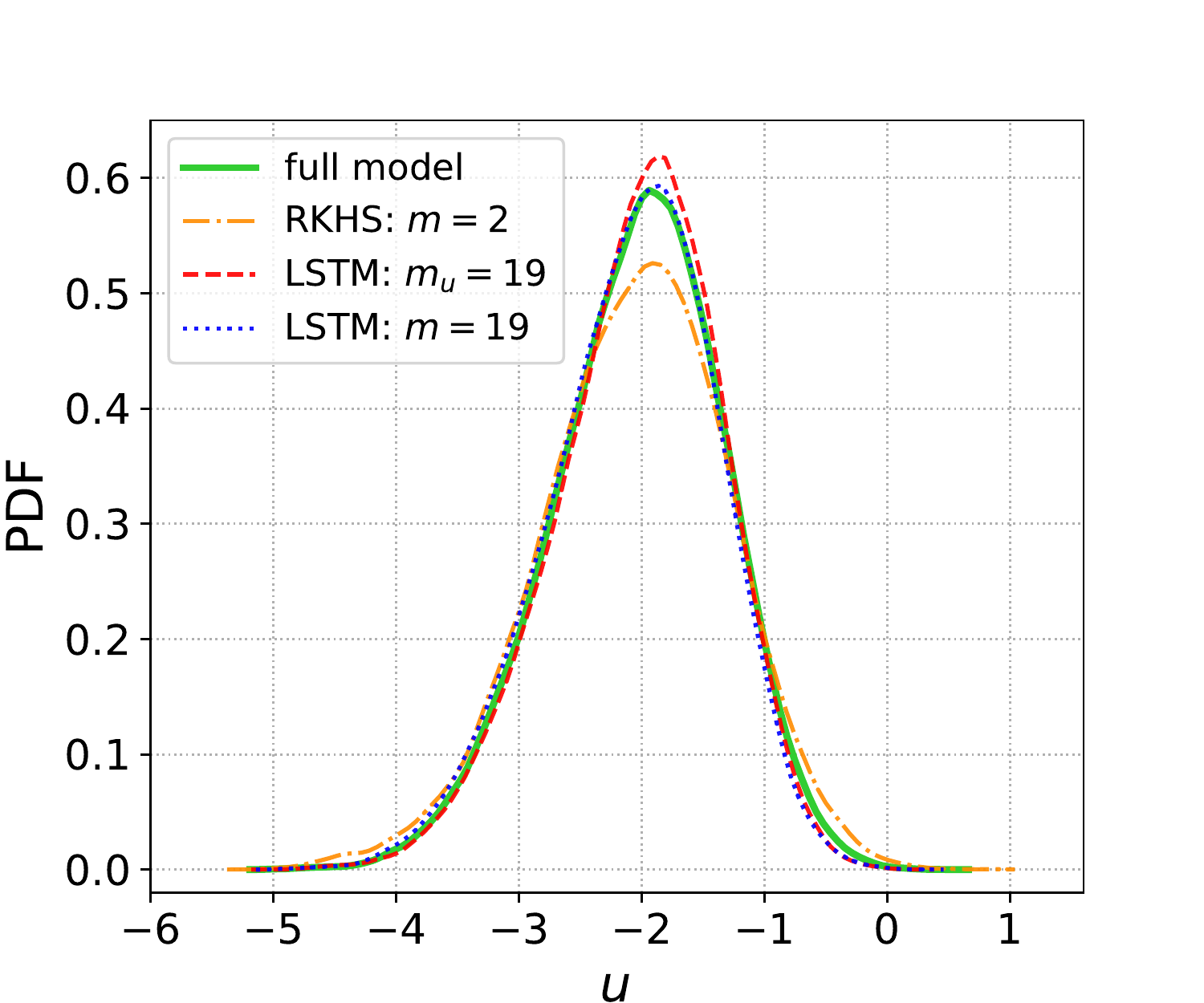}
\endminipage
\endminipage\hfill
\minipage{1\textwidth}
\minipage{0.3333\textwidth}
\subcaption{Weak coupling}
\vspace{-5pt}
\includegraphics[width=\textwidth,height=5cm]{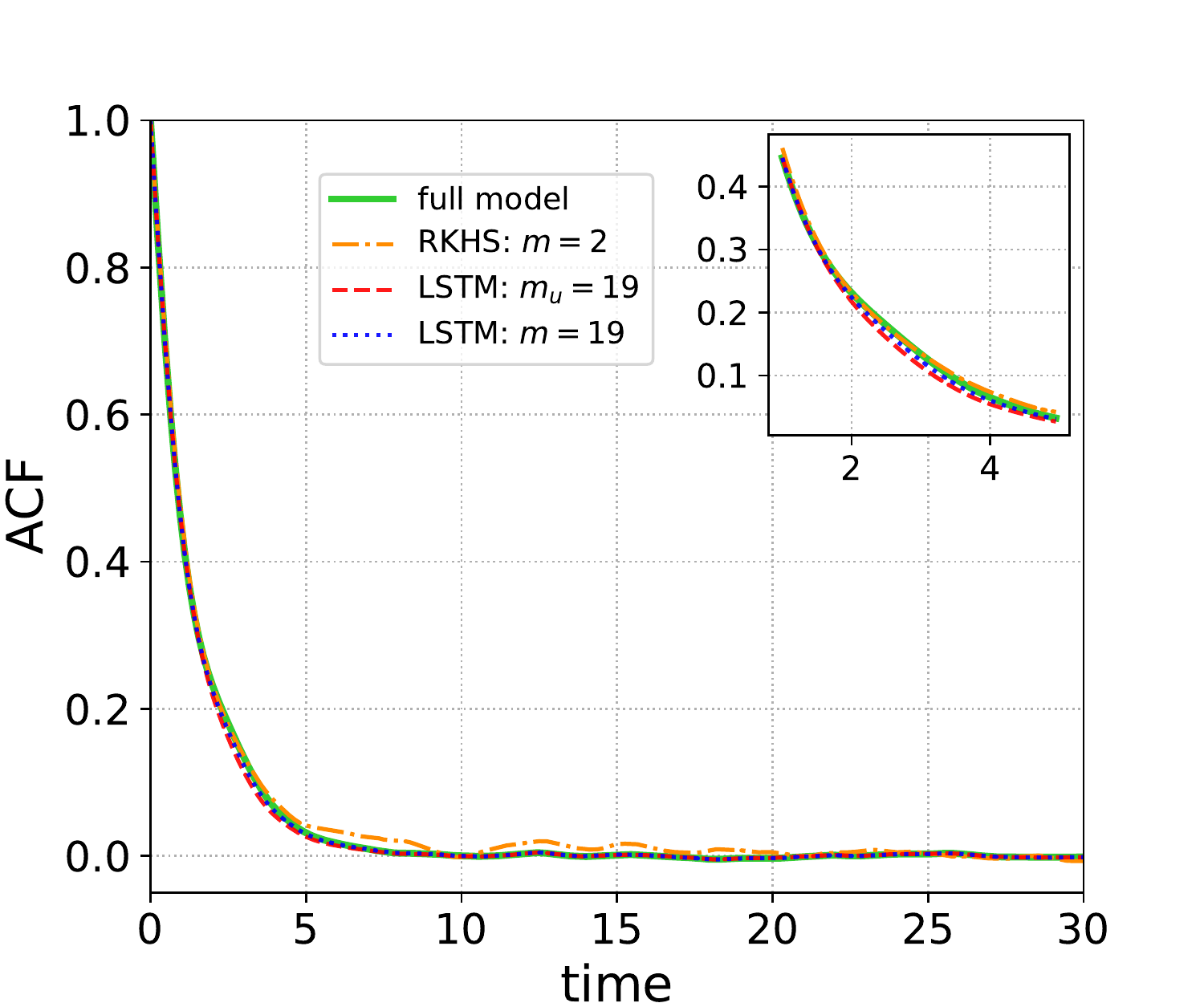}
\endminipage\hfill
\minipage{0.3333\textwidth}
\subcaption{Intermediate coupling}
\vspace{-5pt}
\includegraphics[width=\textwidth,height=5cm]{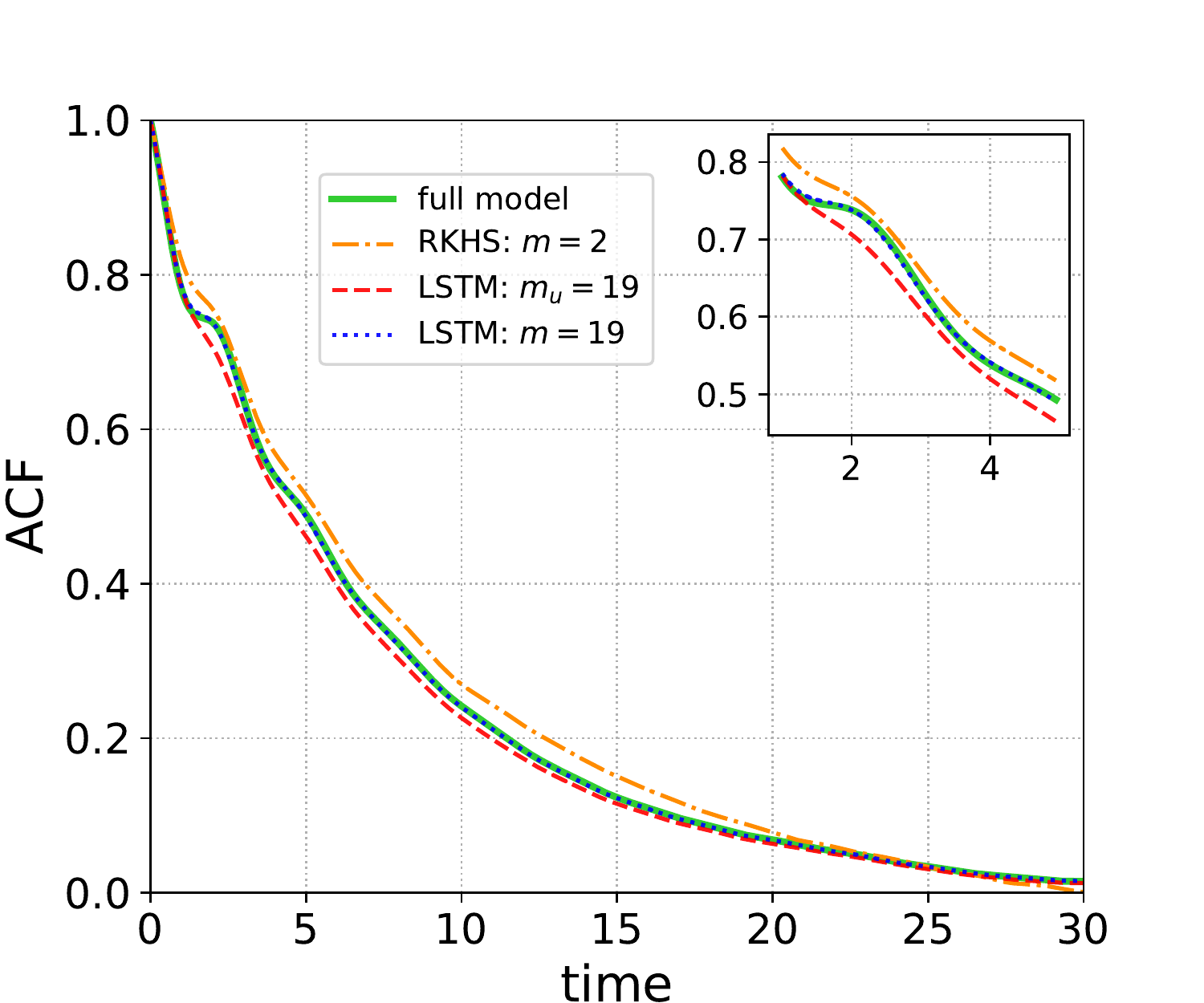}
\endminipage\hfill
\minipage{0.3333\textwidth}
\subcaption{Strong coupling}
\vspace{-5pt}
\includegraphics[width=\textwidth,height=5cm]{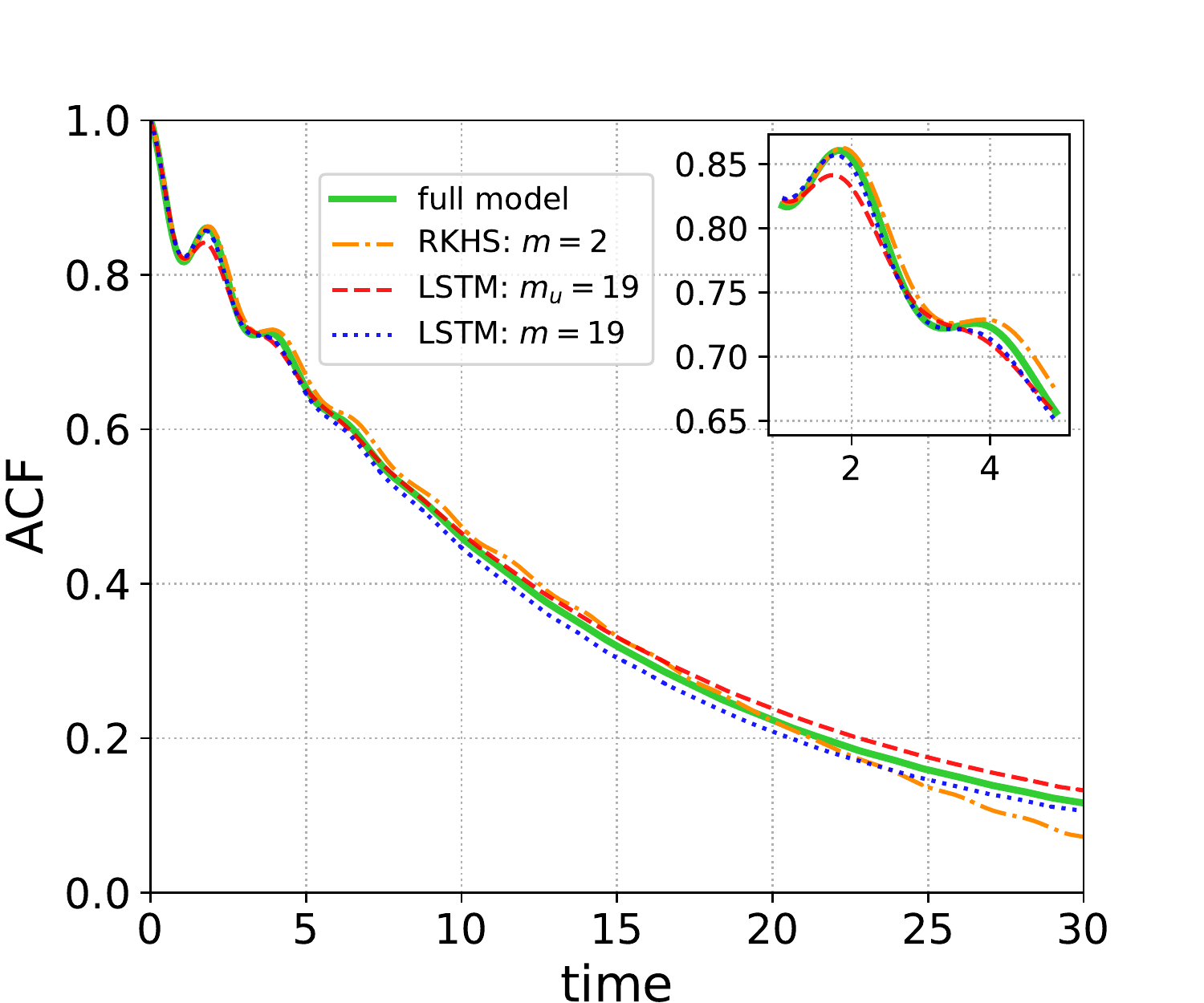}
\endminipage
\endminipage\hfill
\caption{Comparison of densities and Auto-Correlation functions (ACF) between the full and closure models for the mean velocity $u$.}
\label{Fig3_topoLongtime}
\end{figure}

\subsection{\label{sec:level1}Nonlinear Schr\"{o}dinger equation}

We consider the cubic nonlinear Schr\"{o}dinger (NLS) equation defined on a periodic boundary condition $[0,2\pi]$, which dynamical equation
can be written in terms of the Fourier modes as,
\begin{equation}
\frac{du_{k}}{dt}=-ik^2u_{k}-i\sum_{k_{1}\in \mathbb{Z}%
}\sum_{k_{2}\in \mathbb{Z}}u_{k_{1}}u_{k_{2}}u_{k_{1}+k_{2}-k}^{\ast }.
\label{Eqn:uk}
\end{equation}
Numerically, we generate the truth by integrating \eqref{Eqn:uk} on finite
wavenumbers $\left\vert k\right\vert \leq K$ and Strang's splitting method in time
\cite{bao2003numerical}. Here, the number of modes $K=32$ and
the observation time interval $\Delta=0.02$. The observation data length
is $10^6$, obtained from a single trajectory. Taking half of this data set for training, $N=5\times 10^5$ samples.

We simulate the initial conditions by sampling from the Gibbs distribution $\pi = \exp(-\frac{E}{k_BT})$, where $E$ denotes the Hamiltonian of the ODE system resulting from the Fourier representation \eqref{Eqn:uk}; $k_B$ and $T$ denote the Boltzmann constant and temperature, respectively. In this case, the Hamiltonian is given by $E=E_{2}+E_{4}$,%
\begin{eqnarray*}
E_{2} =\sum_{k\in \mathbb{Z}}k^2\left\vert
u_{k}\right\vert ^{2}, \quad\quad
E_{4} =\frac{1}{2}\sum_{k_{1}\in \mathbb{Z}}\sum_{k_{2}\in
\mathbb{Z}}\sum_{k_{3}\in \mathbb{Z}}u_{k_{1}}u_{k_{2}}u_{k_{3}}^{\ast
}u_{k_{1}+k_{2}-k_{3}}^{\ast }.
\end{eqnarray*}
We should point out that the qualitative solutions for higher temperature have larger amplitudes and frequencies. Since smaller time steps are required for accurate solutions with higher amplitude as well as the faster frequency, the problem is numerically stiff as the temperature increases. To keep the presentation short, we only show the numerical results for the zeroth mode $u_{0}$ in a high-temperature regime with $k_{B}T=10$. Our numerical test on lower temperature regime (not shown) do not change the conclusion in this section. In fact, a parametric closure proposed in \cite{hl:15} has shown accurate recovery for extremely low temperature regime, $k_BT=10^{-4}$, and less accurate as the temperature increases. The stiffness of high-temperature regime will also be manifested in the numerical scheme that is used in integrating the closure model as we will explained below.

In this example, we are interested in constructing a closure model for the dynamics of the
zeroth mode $u_{0}$ of the NLS equation. Given the dynamical equation of the resolved variable, $u_0$, we can rewrite it in the form of \eqref{Fadditive} as,
\BEA
\frac{du_{0}}{dt}=-i\sum_{k_{1}\in \mathbb{Z}%
}\sum_{k_{2}\in \mathbb{Z}}u_{k_{1}}u_{k_{2}}u_{k_{1}+k_{2}}^{\ast } := -i\Big(|u_0|^2u_0 + \theta \Big),\label{NLSreduced}
\EEA
where $\theta$ is basically the full vector field without the cubic term that involves only $u_0$ in the right-hand-side of \eqref{NLSreduced}.
The closure model is obtained by concatenating a discretization of \eqref{NLSreduced} with time step $\Delta$ with a map,
$\mathbb{E}^\epsilon \Big[ \Theta_{t+1}|\cdot\Big] : \mathbb{R}^{(m+1)\times 4} \to \mathbb{R}^2$, defined as,
\BEA
{\theta}_{t+1} &= \mathbb{E}^\epsilon \Big[ \Theta_{t+1}|u_{0,t-m:t},{\bm\theta}_{t-m:t}\Big].\label{closureNLS}
\EEA
Here ${\theta}_{t+1}$ denotes the unresolved identifiable component at discrete time $t+1$. To train this model, we need a time series of $\{\theta_t\}$ in addition to $\{u_{0,t}\}$. Based on the form of the resolved dynamics in  \eqref{NLSreduced}, given a training time series of $\{u_{0,t}\}$, we extract $\{\theta_t\}$ by a direct subtraction and a finite difference approximation to the derivative. However, we should point out that if we reverse-engineer this step, that is, solve \eqref{NLSreduced} with the true initial condition of $u_0(0)$ using a lower-order scheme (such as Euler method) and directly use the data $\{\theta_t\}$ that we just obtained from direct subtraction, the solution for $u_0(t)$ will blow up in finite-time. This is a manifestation of the stiffness of this problem. To avoid this issue in the closure model, we apply the following time-splitting method in our numerical discretization of \eqref{NLSreduced}. That is, we use the Euler scheme to solve the linear ODE, $du_{0}/dt= -i \theta$, since we only have discrete estimates of $\theta$, and we use the explicit solution $u_0(t) = u_0(t_0)\exp(-i |u_0(t_0)|^2t)$ for the nonlinear ODE, $du_0/dt =  -i|u_0|^2u_0$.

In this numerical experiment, we fix the memory length to be $m=19$ in the LSTM method, resulting in an approximation of 80-dimensional function $\mathbb{E}^\epsilon \Big[ \Theta_{t+1}|\cdot\Big]$. No residual term is added in \eqref{closureNLS}. For the short-time forecasting, we observe from Fig.~\ref{Fig_NLShighT}(a) that the path-wise solution of ${\rm Re}(u_{0})$ is well captured for a sufficiently long time; the discrepancies in the frequencies are noticeable as time increases. In Fig.~\ref{Fig_NLShighT}(b) and (c), we also reported the ACF for the ${\rm Re}(u_{0})$, calculated by a temporal average over $5\times 10^{5}$ verification data, which is different from the $N=5\times 10^{5}$ training dataset. The PDF for ${\rm Re}(u_{0})$ is obtained from the same verification dataset using the kernel density estimation (KDE) method. Notice that both the ACF (below time 40 unit) and the density of the true $u_{0}$ are also well reproduced. Therefore, for the first mode $u_{0}$\ of the NLS equation, the proposed closure model using the LSTM method can reasonably replicate the short-time forecasting skill and long-time statistics in the high-temperature regime.

We should point out that the resulting model is only valid in predicting the evolution of the system on the same energy level since the underlying Hamiltonian system is not ergodic. This implies that the verification will only be valid to predict the evolution of the system with initial conditions sampled from the same Gibbs distribution where the training data is generated from.

\begin{figure}
\centering
\minipage{1\textwidth}
\minipage{0.5\textwidth}
\subcaption{Trajectory}
\vspace{-5pt}
\includegraphics[width=\textwidth]{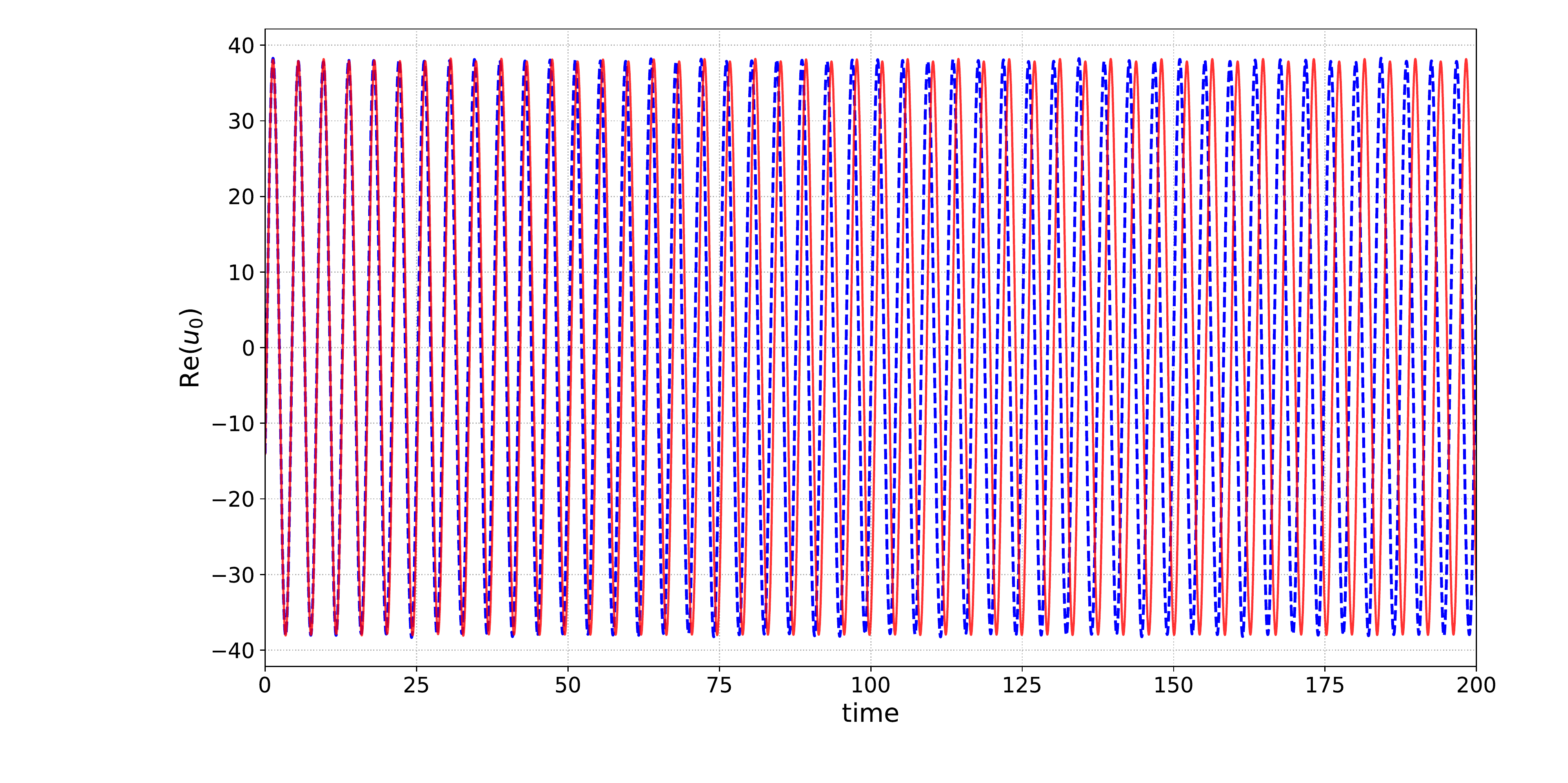}
\endminipage\hfill
\minipage{0.5\textwidth}
\subcaption{ACF}
\vspace{-5pt}
\includegraphics[width=\textwidth]{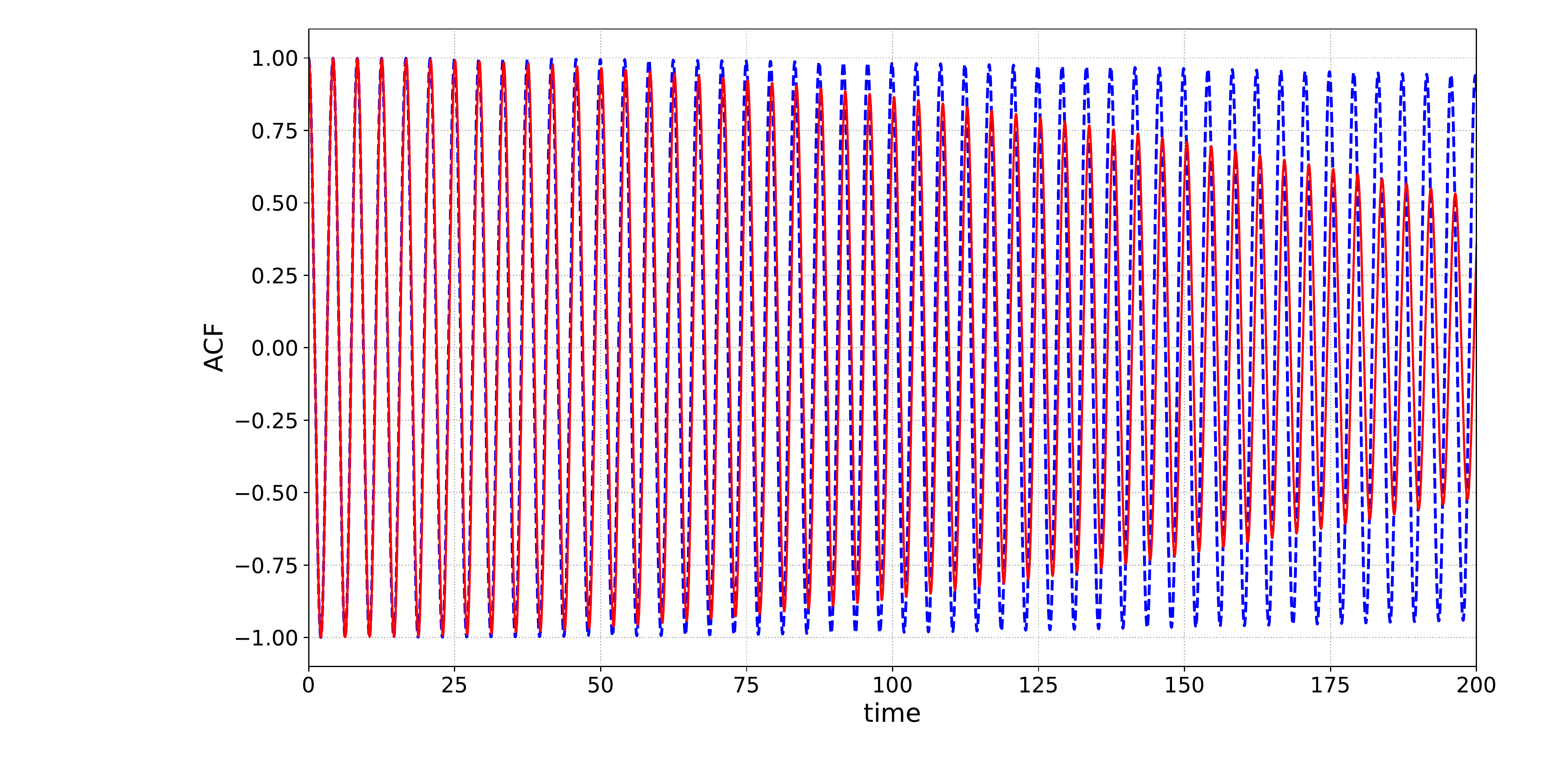}
\endminipage\hfill
\endminipage\hfill
\minipage{.5\textwidth}
\subcaption{Density}
\vspace{-5pt}
\includegraphics[width=\textwidth,height=5cm]{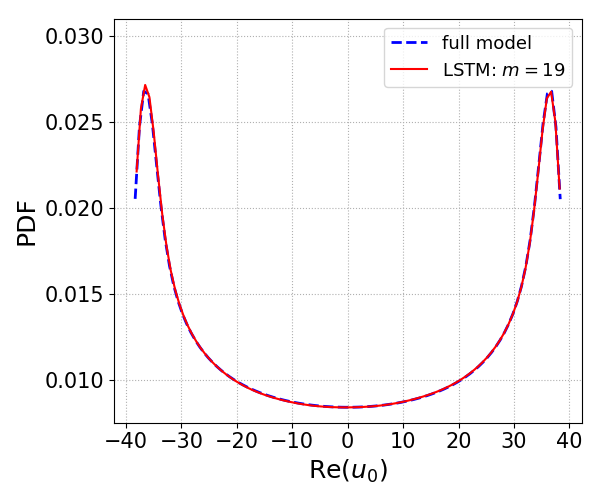}
\endminipage
\caption{Comparison of (a)
trajectories, (b) ACFs, and (c) densities  of the full and closure LSTM models in the high temperature
regime with $k_{B}T=10$. The time step for the closure model is $\Delta
=0.02$ and the LSTM method uses $m=19$ memory terms for $(\bm{\theta},u_{0})$ in Eq.~\eqref{closureNLS}. Full model (blue dashes); LSTM $m=19$ (red solid).}
\label{Fig_NLShighT}
\end{figure}

\subsection{The Kuramoto-Sivashinsky equation} We consider the Kuramoto-Sivashinsky equation (KSE) on an $L-$periodic domain, the Fourier representation of which can be written as

\comment{
which describes the evolution of a
chemical system \cite{10.1143/PTP.55.356}, is given
in the form of%
\begin{eqnarray}
&&\frac{\partial v}{\partial t}+v\frac{\partial v}{\partial x}+\frac{%
\partial ^{2}v}{\partial x^{2}}+\frac{\partial ^{4}v}{\partial x^{4}}=0,
\label{Eqn:ks_real} \\
&&v\left( x,t\right) =v\left( x+L,t\right) ;\text{ \ }v\left( x,0\right)
=g\left( x\right) ,  \notag
\end{eqnarray}%
where $t$ is time, $x$ is space, $v$ is the solution, $L$ is the spatial
length, and $g\left( x\right) $ is the initial condition. We consider a $N$

modes Galerkin--Fourier representation of the solution and then we can write
the KSE in terms of Fourier modes:}
\begin{equation}
\frac{d}{dt}v_{k}=\left( q_{k}^{2}-q_{k}^{4}\right) v_{k}-\frac{iq_{k}}{2}%
\sum_{l=-\infty }^{\infty }v_{l}v_{k-l},  \label{Eqn:ks_fourier}
\end{equation}%
where $q_{k}=2\pi k/L$ with $k\in \mathbb{Z}$, and $v_{k}$ denotes the $k$th Fourier
mode.

In our numerical implementation, we let the full dynamics to be the Galerkin truncation of \eqref{Eqn:ks_fourier} for $|k| \leq K/2$, where $K=96$.
Notice that in the linearized equations of \eqref{Eqn:ks_fourier}, each Fourier mode has an eigenvalue $%
q_{k}^{2}-q_{k}^{4}$ so that high $k$ modes with $\left\vert q_{k}\right\vert >1$ are linearly stable whereas low $k$\ modes with $%
\left\vert q_{k}\right\vert \leq 1$ are not. We set the spatial length $L=2\pi /\sqrt{0.085}$ so that the number of linearly unstable modes is $\left\lfloor 1/\sqrt{0.085}\right\rfloor =3$. In this case, the energy is transferred from the linearly unstable low $3$ modes to the damped high $K/2-3 = 45$ modes through the nonlinear terms so that the KSE is well-posed and the solutions remain globally bounded in time \cite{goodman1994stability}. This regime is exactly the same as the one considered in \cite{lu2017data,lin2019data}.

We predict the six leading modes of the KSE with the following partial dynamics,
\begin{equation}
\frac{d}{dt}\hat{v}_{k}=\left( q_{k}^{2}-q_{k}^{4}\right) \hat{v}_{k}-\frac{iq_{k}}{2}%
\sum_{1\leq |l|,|k-l|\leq 6}\hat{v}_{l}\hat{v}_{k-l} + \hat{\theta}_k, \quad k=1,\ldots,6.  \label{kstruncated}
\end{equation}%
In this case, since the nonlinear terms in \eqref{kstruncated} only involve summation of terms that are restricted to $1\leq |l|,|k-l|\leq 6$, the identifiable unresolved variables, $\theta_k$, depends also on the resolved modes, in addition to the unresolved modes. The proposed closure model is to concatenate the numerical discretization of \eqref{kstruncated} with the discrete nonparametric closure model,
\BEA
\hat{\bm{\theta}}_{t+1} =\mathbb{E}^\epsilon \Big[ \Theta_{t+1}|\hat{\bm{v}}_{t-m:t},\hat{\bm\theta}_{t-m:t}\Big],\label{closure2}
\EEA
where $\bm{\hat{\theta}}_t=(\hat{\theta}_{1,t},\ldots,\hat{\theta}_{6,t})\in\mathbb{C}^6$ and $\bm{\hat{v}}_t=(\hat{v}_{1,t},\ldots,\hat{v}_{6,t})\in\mathbb{C}^6$. In our numerical experiment, we set $m=19$ such that $\mathbb{E}^\epsilon$ in \eqref{closure2} is a function that maps a real-valued vector of size $(19+1)\times 12\times 2 = 480$ to a 12-dimensional vector consisting of the real and imaginary values of $\bm{\hat{\theta}}_{t+1}$. To evolve the dynamics in \eqref{kstruncated}-\eqref{closure2}, we discretize \eqref{kstruncated} with the midpoint rule and a time step $\Delta$.

In our numerical experiment, the true time series for training are obtained by integrating the full dynamics, that is, \eqref{Eqn:ks_fourier} truncated on $1\leq |k|\leq 48$ with a time step $\delta t=0.005$. We observe only the first 6 modes at a time step $\Delta =0.05$. The size of the training data set is $N=2.5\times 10^{5}$. The identifiable unresolved variable, $\bm{\theta}_t$, is estimated by fitting the time series $\bm{v}_t$ to the dynamics in \eqref{kstruncated}. Subsequently, we use the pair $\{\bm{\theta}_t,\bm{v}_t\}$ to train the LSTM model for \eqref{closure2}; for training, we add Gaussian noises of variance 1\% relative to that of the original data to avoid overfitting that tends to occur when the hypothesis space is rather complex and the amount of data is finite.

Fig.~\ref{Fig_ks}(a) displays the difference of the short-time spatiotemporal manifestation between the full and the closure models. One can see that the spatio-temporal pattern of the proposed closure model is consistent with that of the full KS model up to roughly time $t=54$. A close inspection shows an accurate path-wise prediction of the real component of the leading six Fourier modes up to time $54$ (see Fig.~\ref{Fig_KSsup}). In Fig.~\ref{KSrmse}, we report the root-mean-square-error (RMSE) and (anomaly correlation) ANCR as defined in \cite{crommelin2008subgrid} that characterize the lead-time prediction skill, averaged over 1000 initial conditions out-of-samples and the spatial domain. Notice that both metrics show a substantial improvement in the prediction skill relative to that of the bare truncated model which is a result of using only \eqref{kstruncated} with $\hat{\theta}_k=0$.

For this regime $L\approx 21.55$, the leading Lyapunov exponent is roughly $\lambda_1\approx 0.04$ \cite{edson_bunder_mattner_roberts_2019}, which suggests that the accurate prediction length is roughly $54\times \lambda_1=2.16$ Lyapunov time units. In other words, the length of the prediction is on the same order as the Lyapunov time. While this empirical result suggests that the constant $a$ in Theorem~\ref{thm3} is roughly $e^{\lambda_1}$, a theoretical justification for such a tighter bound will require more thorough investigation with possibly additional assumptions on the dynamics.

In Fig.~\ref{Fig_ks}(b), we show the accurate recovery of the energy spectra. Fig.~\ref{Fig_KSsup} also displays the results for the comparison of ACFs and PDFs for all the Fourier modes $v_{1},\ldots
,v_{6}$ and CCF's defined as the cross-correlation functions between $|v_k|^2$ and $|v_4|^2$. All of these long-time statistics are computed using the Monte-Carlo estimation over $2.5\times 10^5$ data samples. We can see that ACFs, CCFs, and PDFs can be well reproduced by the LSTM for all modes. Therefore, the closure model using the LSTM can provide an accurate recovery for both the short-time forecasting and the long-time statistics of the KSE.

To summarize, we should also mention that while such an accurate recovery in path-wise and statistical prediction has also been achieved with the NARMAX parametric closure in \cite{lu2017data,lin2019data}, careful choice of parametric ansatz is necessary with the NARMAX model. Here, an accurate recovery is obtained with a much simpler nonparametric model in \eqref{closure2}.

\begin{figure}[tbp]
\centering
\minipage{1\textwidth}
\minipage{0.55\textwidth}
\subcaption{Difference of the full and closure models solutions}
\vspace{-5pt}
\includegraphics[width=\textwidth,height=5cm]{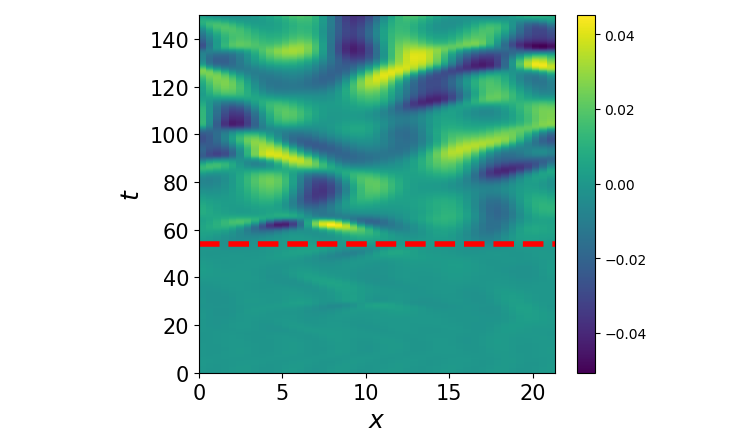}
\endminipage
\minipage{0.35\textwidth}
\subcaption{Energy spectrum}
\vspace{-5pt}
\includegraphics[width=\textwidth,height=5cm]{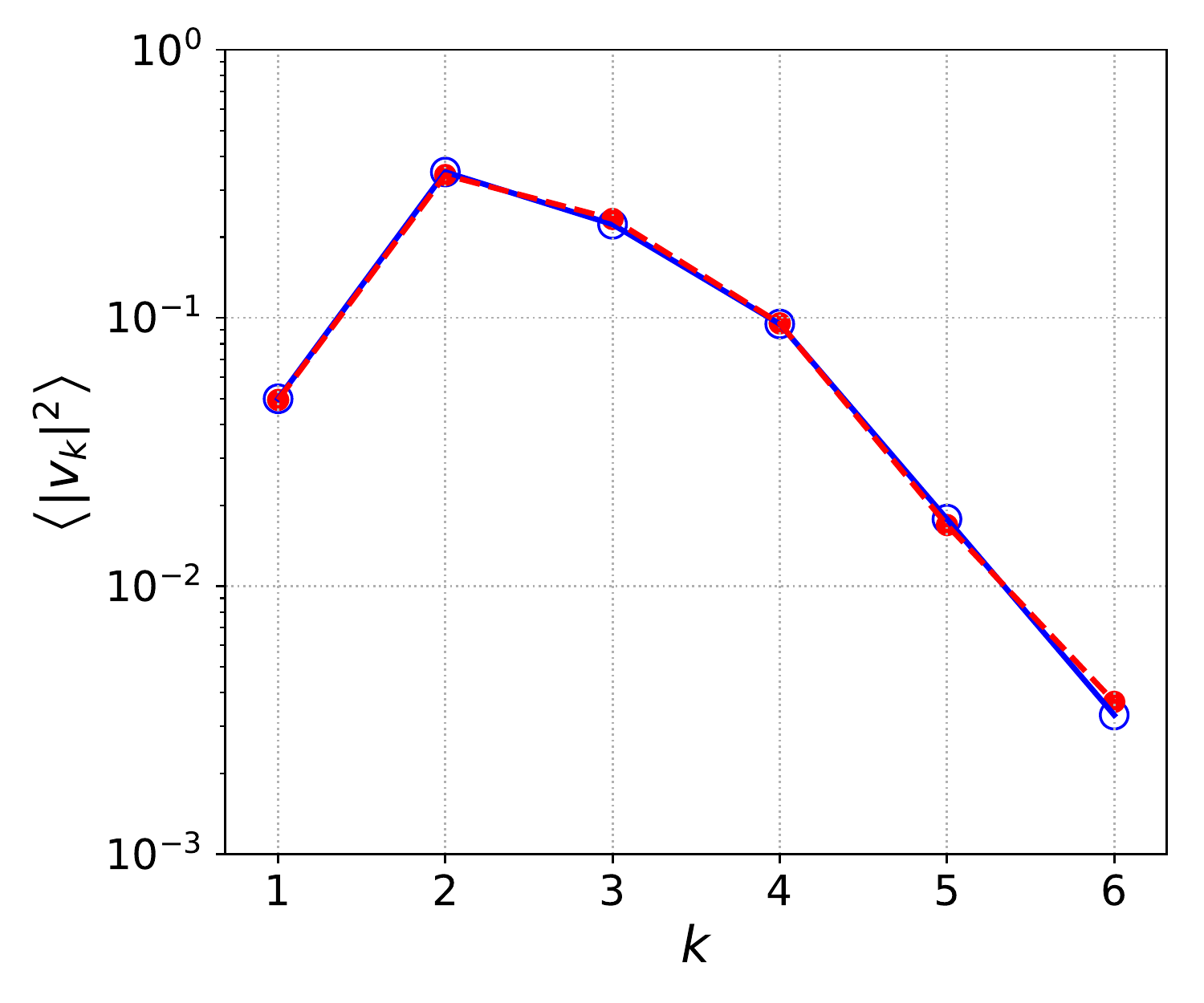}
\endminipage
\endminipage\hfill
\caption{(a) Difference of spatiotemporal manifestation of KS solutions starting from the same initial conditions between the full model and the closure model using the LSTM method; (b) The energy spectra $\left\langle\left\vert v_{k}\right\vert\right\rangle ^2$ for the KS solutions between the full (blue solid) and the closure (red dashes) LSTM models.}
\label{Fig_ks}
\end{figure}

\begin{figure}[tbp]
\centering
\minipage{1\textwidth}
\minipage{0.5\textwidth}
\subcaption{RMSE}
\vspace{-5pt}
\includegraphics[width=\textwidth]{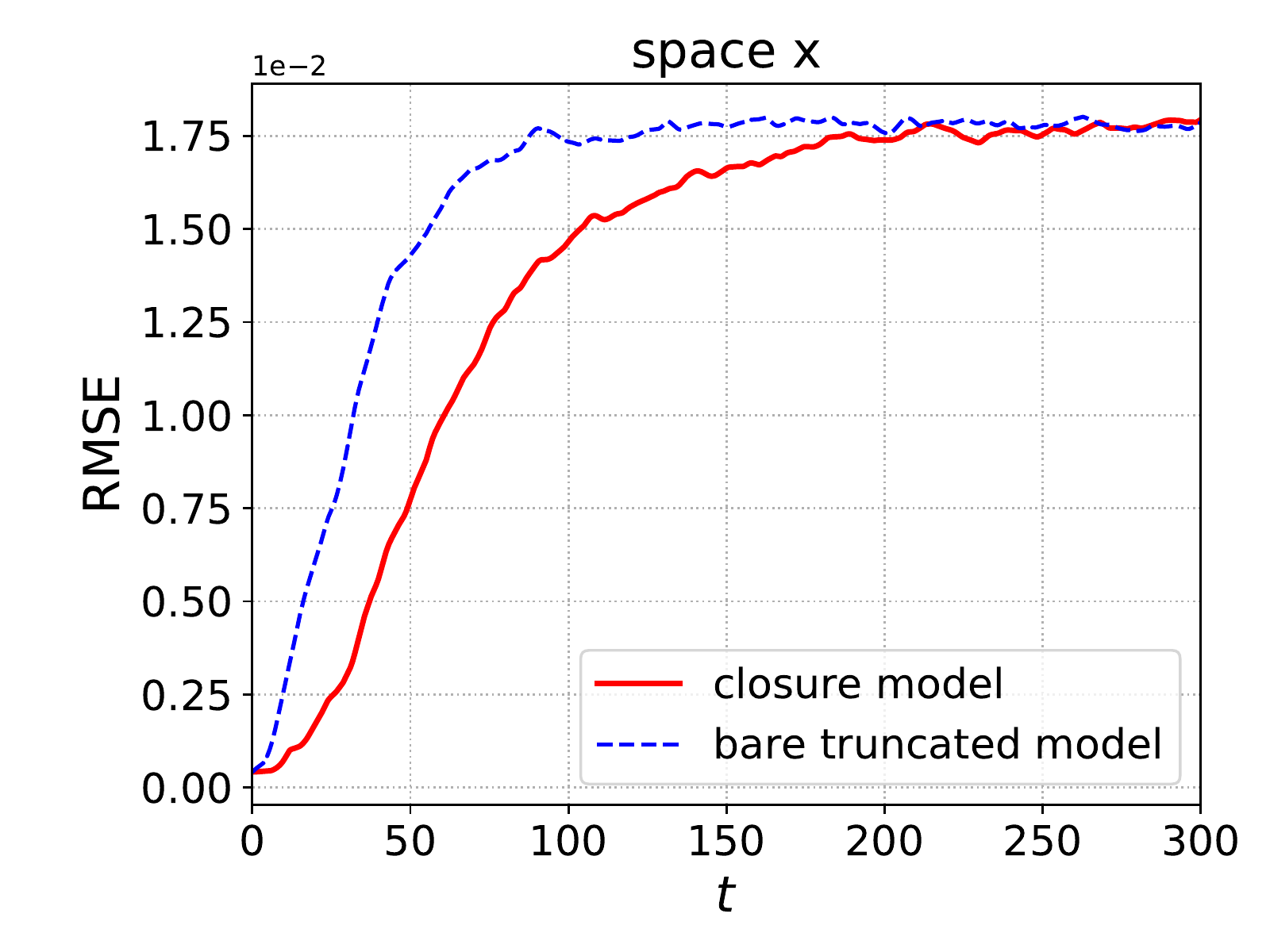}
\endminipage\hfill
\minipage{0.5\textwidth}
\subcaption{ANCR}
\vspace{-5pt}
\includegraphics[width=\textwidth]{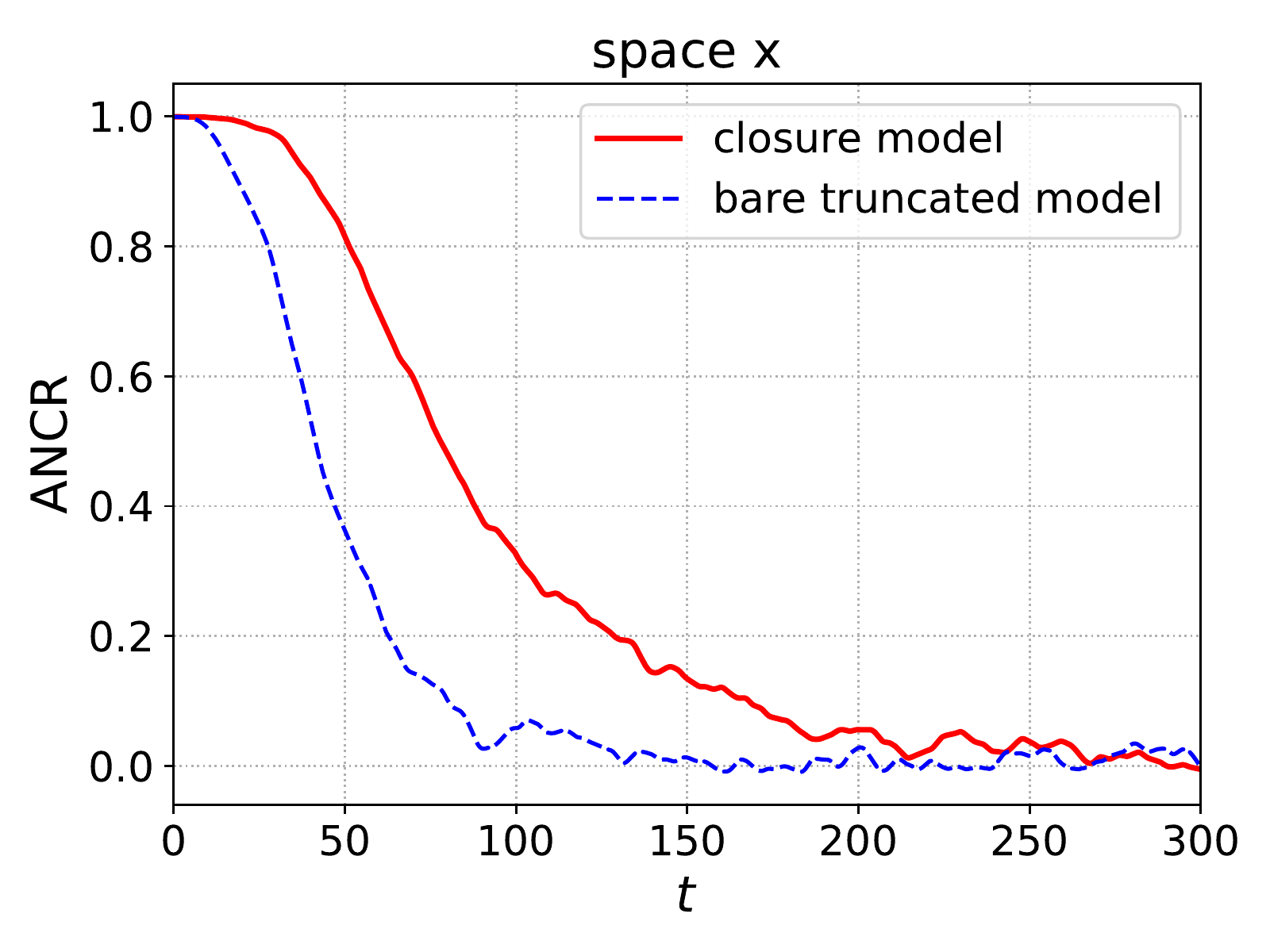}
\endminipage\hfill
\endminipage\hfill
\caption{Prediction error: RMSE and ANCR as a function of time. These metric are estimated by a spatial and temporal average over 1000 initial conditions out-of-sampling.}
\label{KSrmse}
\end{figure}

\begin{figure*}[tbp]
\centering
\includegraphics[scale=0.4]{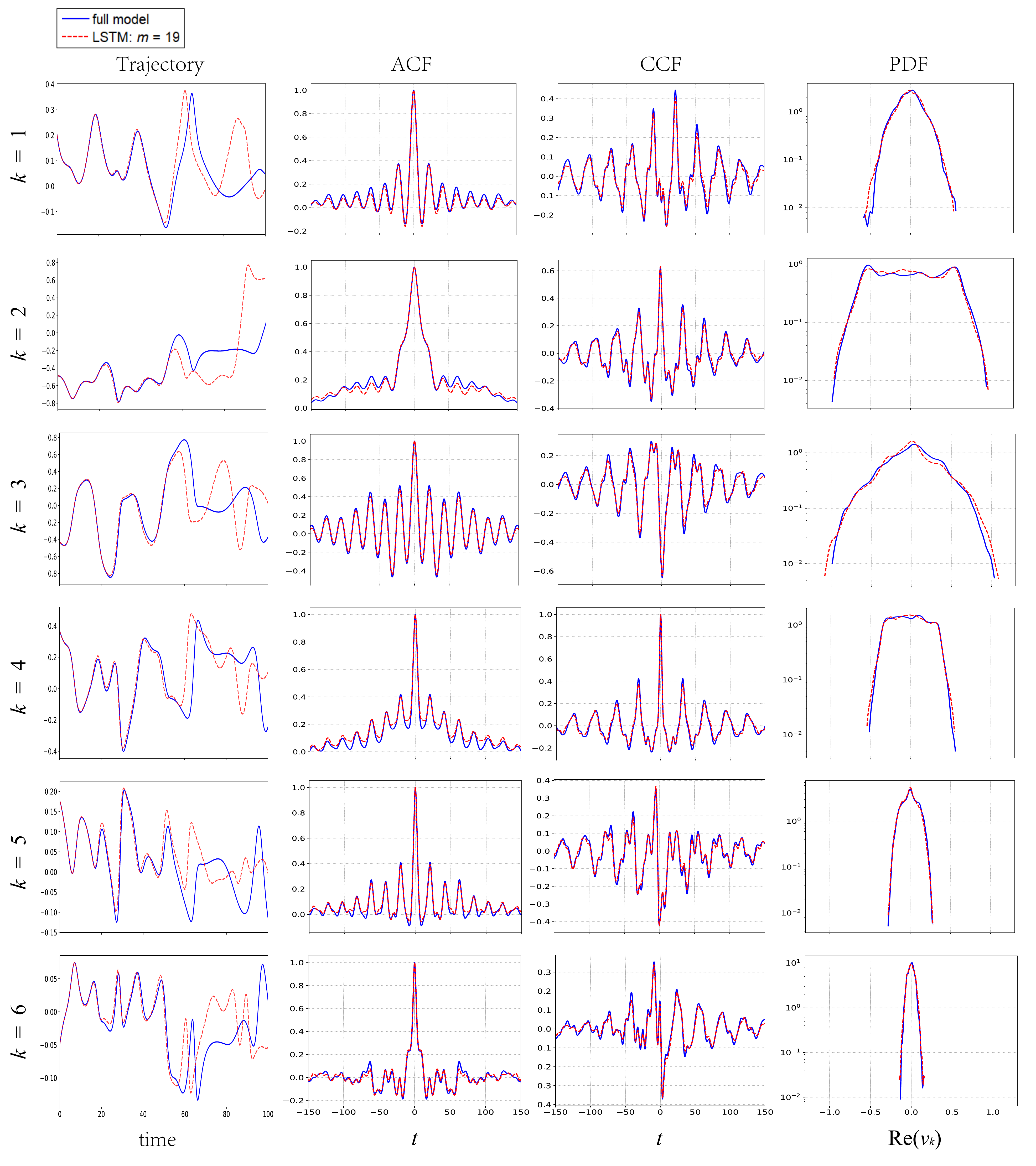}
\caption{(Color online) Comparison of trajectories for Re$(v_{k})$, ACFs for
Re$(v_{k})$, CCFs between $\left\vert v_{k}\right\vert ^2$ and $\left\vert
v_{4}\right\vert ^2$, and PDFs for Re$(v_{k})$ between the full model and
the closure model. Solid blue line corresponds to the full model and dashed
red line corresponds to the closure model. }
\label{Fig_KSsup}
\end{figure*}

\section{Summary} \label{summary}
We have presented a general nonparametric framework for prediction with missing dynamics. The proposed framework reformulates the closure model as a supervised learning problem in which the task is to approximate a high-dimensional map that takes the history of resolved and identifiable unresolved variables to the missing components in the resolved dynamics. Mathematically, we validate the approach with an error bound which implies that the closure framework converges when a consistent learning algorithm is used. Numerically, we demonstrate the effectiveness of our framework in replicating severely truncated complex nonlinear problems arising in many applications. While the framework can be realized using any machine learning technique, we found that the LSTM as a special class of RNN is robust for this particular task.

From the positive numerical tests, several open questions deserve further investigation. For example, justifying the existence of the equilibrium distribution of the closure model; demonstrating the convergence to the underlying equilibrium distribution; characterizing the prediction error using Lyapunov exponents for chaotic dynamics; clarifying the condition under which we can achieve a stable closure model.

{\bf Acknowledgments.} The research of J.H. was partially supported by the ONR Grant N00014-16-1-2888 and NSF Grant DMS-1854299. J.H. thanks Di Qi for sharing the codes for the 57-mode barotropic stress model. S. L. and H. Y. gratefully acknowledge the support of National Supercomputing Center Singapore (NSCC)
and High-Performance Computing (HPC) of the National University of Singapore for providing computational resources, and the support of NVIDIA Corporation with the donation of the Titan Xp GPU used for this research. H. Y. was partially supported by the US National Science Foundation under award DMS-1945029.

\appendix

\section{Proof of Theorem~1}\label{app:A}

Before we prove Theorem~1 in the main text, let us review the following bound which will be used below as well as in the proof of Theorem~3.
\begin{lemma}
\label{lemma1} Let $\alpha, c >0$ be real numbers and $m, T \geq 0$ be integers. Suppose that,
\BEA
E_{T+1} \leq \alpha \sum_{j=T-m}^T {E_j} + c,\nonumber
\EEA
If $E_j = 0$ for $j=-m,\ldots,0$, then for all integer $T\geq 0$.
\BEA
E_{T+1} \leq c (1+\alpha)^T. \nonumber
\EEA
\end{lemma}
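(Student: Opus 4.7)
The plan is to prove this by strong induction on $T$, using the telescoping identity $\alpha\sum_{j=a}^b (1+\alpha)^{j-1} = (1+\alpha)^b - (1+\alpha)^{a-1}$ for the geometric series. The induction hypothesis will be $E_j \leq c(1+\alpha)^{j-1}$ for every $1 \leq j \leq T$ (together with the given $E_j = 0$ for $j = -m,\ldots,0$), and the goal is to deduce $E_{T+1} \leq c(1+\alpha)^T$.

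For the base case $T = 0$, the hypothesis of the lemma reads $E_1 \leq \alpha \sum_{j=-m}^{0} E_j + c = c$, matching $c(1+\alpha)^0$. For the inductive step, I would substitute the bound $E_j \leq c(1+\alpha)^{j-1}$ (valid for $j \geq 1$; the terms with $j \leq 0$ vanish) into the recursive inequality, and split into two subcases: if $T \leq m$, the sum effectively runs from $j=1$ to $T$ and the geometric identity immediately yields $E_{T+1} \leq c[(1+\alpha)^T - 1] + c = c(1+\alpha)^T$; if $T \geq m+1$, the sum runs from $j = T-m$ to $T$, giving $E_{T+1} \leq c[(1+\alpha)^T - (1+\alpha)^{T-m-1}] + c \leq c(1+\alpha)^T$, where the last inequality uses $(1+\alpha)^{T-m-1} \geq 1$ since $\alpha > 0$ and $T - m - 1 \geq 0$.

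I do not anticipate a genuine obstacle here: the only mildly subtle point is handling the case where the window $[T-m,T]$ protrudes below index $1$, which is harmless because those terms are zero by hypothesis, so one may freely replace the lower limit by $\max(1, T-m)$. Everything else is bookkeeping with a geometric sum, and the whole argument fits in a few lines once the two-case split is made explicit.
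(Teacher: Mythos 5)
Your proposal is correct and follows essentially the same route as the paper's proof: strong induction on $T$ combined with the closed form of the geometric sum $\alpha\sum_j(1+\alpha)^{j-1}$. If anything, your explicit two-case split ($T\leq m$ versus $T\geq m+1$) handles the boundary where the window $[T-m,T]$ dips below index $1$ slightly more carefully than the paper's single inequality $\sum_{j=T-m}^{T}(1+\alpha)^{j-1}\leq\sum_{j=1}^{T}(1+\alpha)^{j-1}$, which is stated without comment for all $T\geq m$.
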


\begin{proof}
We proceed by induction. one can verify that, $E_1 \leq c$, $E_2 \leq c(1+\alpha)$ and so on. In fact, we can verify for $j=0,\ldots, m$ one by one that
\BEA
E_j \leq c(1+\alpha)^{j-1}.
\EEA
By induction, for $T\geq m$, we have
\BEA
E_{T+1} \leq c \alpha \sum_{j=T-m}^{T} (1+\alpha)^{j-1} + c \leq c \alpha \sum_{j=1}^{T} (1+\alpha)^{j-1} + c  = c(1+\alpha)^T. \label{gronwallbound}
\EEA
\end{proof}

Now we proceed with the proof of Theorem~1. As we mentioned before, since
\BEA
\mathbb{E}[y_{t+1}|x_t,y_t]=\mathcal{G}(x_t,y_t),\label{Ey}
\EEA
we can rewrite the full dynamics as,
\BEA
x_{t+1} &=& \mathcal{F}(x_t,y_t), \nonumber\\
y_{t+1} &=& \mathbb{E}[Y_{t+1}|x_t,y_t].\nonumber
\EEA
We consider an approximate dynamics given as,
\BEA
\hat{x}_{t+1} &=& \mathcal{F}(\hat{x}_t,\hat{y}_t)  \nonumber\\
\hat{y}_{t+1} &=& \mathbb{E}^\epsilon[y_{t+1}|\hat{x}_t,\hat{y}_t]  + \xi_{t+1},\nonumber
\EEA
where $\xi_{t+1} \sim \Xi$ are Gaussian white noises with variance,
\BEA
\mathbb{E}[\Xi^2] := \mathbb{E}\left[\left(Y_{t+1}-\mathbb{E}^\epsilon[Y_{t+1}|{X}_t,{Y}_t] \right)^2 \right]
= \mathbb{E}\left[\left(\mathbb{E}[Y_{t+1}|X_t,Y_t]-\mathbb{E}^\epsilon[Y_{t+1}|{X}_t,{X}_t] \right)^2 \right] = \mathcal{O}(\epsilon^2).\label{noisebalanced}
\EEA

Define $E_{x,t}: = |x_{t+1} - \hat{x}_{t+1}|$ and $E_{y,t}: = |y_{t+1} - \hat{y}_{t+1}|$, using the consistency in \eqref{Ey} and the Lipschitz conditions of $\mathcal{F}$ and $\mathcal{G}$, we deduce
\begin{align}
E_{y,t+1} &\leq \left\vert\mathbb{E}[Y_{t+1}|x_t,y_t]  - \mathbb{E}^\epsilon[Y_{t+1}|\hat{x}_t,\hat{y}_t]\right\vert  + \left\vert\xi_{t+1}\right\vert \nonumber\\
&\leq \left\vert\mathbb{E}[Y_{t+1}|x_t,y_t]  - \mathbb{E}[Y_{t+1}|\hat{x}_t,\hat{y}_t]\right\vert + \left\vert\mathbb{E}[Y_{t+1}|\hat{x}_t,\hat{y}_t] - \mathbb{E}^\epsilon[Y_{t+1}|\hat{x}_t,\hat{y}_t]\right\vert  +|\xi_{t+1}| \nonumber\\
& < \left\vert\mathcal{G}(x_t,y_t) - \mathcal{G}(\hat{x}_t,\hat{y}_t)\right\vert  + \left\vert\mathbb{E}[Y_{t+1}|\hat{x}_t,\hat{y}_t] - \mathbb{E}^\epsilon[Y_{t+1}|\hat{x}_t,\hat{y}_t]\right\vert   + | \xi_{t+1}| \nonumber\\
&\leq L_1 E_{y,t} +  L_2 E_{x,t}  + \left\vert\mathbb{E}[Y_{t+1}|\hat{x}_t,\hat{y}_t] - \mathbb{E}^\epsilon[Y_{t+1}|\hat{x}_t,\hat{y}_t]\right\vert  + |\xi_{t+1}|\nonumber
\end{align}

Define $E_{x,T+1}^*:=\mathbb{E}[\max_{t=\{0,\ldots,T+1\}}E_{x,t}]$ and $E_{y,T+1}^*:=\mathbb{E}[\max_{t=\{0,\ldots,T+1\}}E_{y,t}]$. Then, by the Burkholder-Davis-Gundy inequality \cite{ps:00},
\BEA
E_{y,T+1}^* \leq L_1 E^*_{y,T} + L_2 E^*_{x,T}  + C \epsilon,\label{Eystarxx}
\EEA
in which we have used \eqref{noisebalanced} to bound the last two terms. This bound can be explicitly written as,
\BEA
E_{y,T+1}^* &\leq& L_1^{T+1} E^*_{y,0} + \sum_{j=0}^T L_1^{j} (L_2 E^*_{x,T-j}  + C \epsilon) \nonumber \\
&\leq& L_1^{T+1} E^*_{y,0} + (L_2 E^*_{x,T}  + C \epsilon) \sum_{j=0}^T L_1^{j} \nonumber\\
&=& (L_2 E^*_{x,T}  + C \epsilon) \frac{L_1^{T+1}-1}{L_1-1}.\label{Eystar2}
\EEA
where we have used the fact that $L_2 E^*_{x,t}  + C \epsilon$ is non-decreasing to get the second inequality and $E^*_{y,0}=0$ to obtain the last equality.

Using similar algebra, we have
\BEA
E_{x,T+1}^*\leq L_3 E^*_{x,T} + L_4 E^*_{y,T}  \label{Ex}
\EEA

Inserting \eqref{Eystar2} into \eqref{Ex}, we obtain
\BEA
E_{x,T+1}^* \leq L_3 E^*_{x,T} + L_4 (L_2 E^*_{x,T-1}  + C \epsilon) \frac{L_1^{T}-1}{L_1-1}\nonumber\\
\leq \alpha (E^*_{x,T} + E^*_{x,T-1}) + CL_4\epsilon \frac{L_1^{T}-1}{L_1-1}
\EEA
where we have define $\alpha = \max\{L_3,L_2L_4\frac{L_1^{T}-1}{L_1-1}\}$.

Given $E_{x,0}^*=0$, we apply the bound in Lemma~\ref{lemma1} for $m=1$,
\BEA
E_{x,T+1}^* &\leq&  CL_4 \epsilon \frac{L_1^{T}-1}{L_1-1} (1+\alpha)^{T} = \mathcal{O}(a^T\epsilon),
\nonumber
\EEA
for some constant $a>1$ and the proof is complete.

\section{Proof of Theorem~2}\label{app:B}

Let $Y_{t+1}^\Delta:= \frac{Y_{t+1}-Y_t}{\Delta}$ such that,
\BEA
\mathbb{E}\Big[Y_{t+1}^\Delta |x_t,y_t\Big]=g(x_t,y_t).\label{Ey2}
\EEA
With this definition, we can rewrite the full dynamics as,
\BEA
x_{t+1} &=& x_t + f(x_t,y_t) \Delta + \Delta^{1/2}\sigma_x \xi_{x,t+1},\nonumber \\
y_{t+1} &=&y_t + \mathbb{E}[Y^\Delta_{t+1}|x_t,y_t] \Delta + \Delta^{1/2}\sigma_y \xi_{y,t+1}.\nonumber
\EEA
We consider an approximate dynamics given as,
\BEA
\hat{x}_{t+1} &=& \hat{x}_t + f(\hat{x}_t,\hat{y}_t) \Delta + \Delta^{1/2}\sigma_x \xi_{x,t+1}, \nonumber\\
\hat{y}_{t+1} &=& \hat{y}_t + \mathbb{E}^\epsilon[Y^\Delta_{t+1}|\hat{x}_t,\hat{y}_t]\Delta  + \Delta^{1/2}\hat\sigma_y \xi_{y,t+1}.\nonumber
\EEA
First, notice that
\BEA
\Delta \hat\sigma_y^2 &=& \mathbb{E}\left[\left(Y_{t+1}-Y_{t}-\Delta \mathbb{E}^\epsilon[Y^\Delta_{t+1}|X_t,Y_t]\right)^2\right]\nonumber \\
&\leq&\mathbb{E}\left[\left(Y_{t+1}-Y_t-\Delta \mathbb{E}[Y^\Delta_{t+1}|X_t,Y_t]\right)^2\right]  + \Delta^2\mathbb{E}\left(\mathbb{E}[Y^\Delta_{t+1}|X_t,Y_t] -\mathbb{E}^\epsilon[Y^\Delta_{t+1}|X_t,Y_t]\right)^2 \ldots \nonumber \\
 && +2\Delta \mathbb{E}\left[\left(Y_{t+1}-Y_t-\Delta \mathbb{E}[Y^\Delta_{t+1}|X_t,Y_t]\right)\left(\mathbb{E}[Y^\Delta_{t+1}|X_t,Y_t] -\mathbb{E}^\epsilon[Y^\Delta_{t+1}|X_t,Y_t]\right)\right] \nonumber \\
&=& \Delta\sigma_y^2 + \mathcal{O}(\Delta^2\epsilon^2), \label{noisebalanced2}
\EEA
where the last term vanishes since the mean of $y_{t+1}-y_t-\Delta\mathbb{E}[Y^\Delta_{t+1}|x_t,y_t]=\Delta^{1/2}\sigma_y \xi_{x,t+1}$ is zero.


Define $E_{x,t+1}: = |x_{t+1} - \hat{x}_{t+1}|$ and $E_{y,t+1}: = |y_{t+1} - \hat{y}_{t+1}|$, using the consistency in \eqref{Ey2} and the Lipschitz conditions of $f$ and $g$, we deduce
\begin{align}
E_{y,t+1} &\leq E_{y,t}+ \Delta\left\vert\mathbb{E}[Y^\Delta_{t+1}|x_t,y_t]  - \mathbb{E}^\epsilon[Y^\Delta_{t+1}|\hat{x}_t,\hat{y}_t]\right\vert  + \Delta^{1/2} \left\vert\sigma_y-\hat\sigma_y\right\vert |\xi_{y,t+1}| \nonumber\\
&\leq E_{y,t} +\Delta \left\vert\mathbb{E}[Y^\Delta_{t+1}|x_t,y_t]  - \mathbb{E}[Y^\Delta_{t+1}|\hat{x}_t,\hat{y}_t]\right\vert + \Delta\left\vert\mathbb{E}[Y^\Delta_{t+1}|\hat{x}_t,\hat{y}_t] - \mathbb{E}^\epsilon[Y^\Delta_{t+1}|\hat{x}_t,\hat{y}_t]\right\vert  + \Delta^{1/2} \left\vert\sigma_y-\hat\sigma_y\right\vert |\xi_{y,t+1}| \nonumber\\
& < E_{y,t} + \Delta \left\vert g(x_t,y_t) - g(\hat{x}_t,\hat{y}_t)\right\vert  +\Delta \left\vert\mathbb{E}[Y^\Delta_{t+1}|\hat{x}_t,\hat{y}_t] - \mathbb{E}^\epsilon[Y^\Delta_{t+1}|\hat{x}_t,\hat{y}_t]\right\vert   + \Delta^{1/2} \left\vert\sigma_y-\hat\sigma_y\right\vert |\xi_{y,t+1}| \nonumber\\
&\leq (1+\Delta \ell) E_{y,t} + \Delta  \ell E_{x,t}  + \Delta \left\vert\mathbb{E}[Y^\Delta_{t+1}|\hat{x}_t,\hat{y}_t] - \mathbb{E}^\epsilon[Y^\Delta_{t+1}|\hat{x}_t,\hat{y}_t]\right\vert  + \Delta^{1/2} \left\vert\sigma_y-\hat\sigma_y\right\vert |\xi_{y,t+1}|.\nonumber
\end{align}
where $\ell=\mathcal{O}(1)$ denotes the largest Lipschitz constant in all directions. Define $E_{x,T+1}^*:=\mathbb{E}[\max_{t=\{0,\ldots,T+1\}}E_{x,t}]$ and $E_{y,T+1}^*:=\mathbb{E}[\max_{t=\{0,\ldots,T+1\}}E_{y,t}]$. Then, by the Burkholder-Davis-Gundy inequality \cite{ps:00}, we have
\BEA
E_{y,T+1}^* \leq (1+\Delta \ell) E^*_{y,T} + \Delta  \ell E^*_{x,T}  + C \Delta \epsilon,\label{Eystar}
\EEA
where we have used \eqref{noisebalanced2}.
Concatenate this with
\BEA
E_{x,T+1}^* \leq (1+\Delta \ell) E^*_{x,T} + \Delta  \ell E^*_{y,T},  \nonumber
\EEA
we have
\BEA
E_{T+1}^* \leq (I+ A + A^2 + \ldots + A^{T})b,\nonumber
\EEA
where
\BEA
E_{T+1}^* = \begin{pmatrix} E_{x,T+1}^* \\ E_{y,T+1}^*
\end{pmatrix},\quad\quad
A = \begin{pmatrix} 1+\Delta \ell & \Delta \ell \\ \Delta \ell & 1+\Delta \ell
\end{pmatrix},\quad\quad b = \begin{pmatrix} 0  \\ C\Delta\epsilon
\end{pmatrix}.\nonumber
\EEA
Using the fact that,
\BEA
A = \frac{1}{2}\begin{pmatrix} 1 & 1 \\ -1 & 1 \end{pmatrix}\begin{pmatrix} 1 & 0 \\ 0 & 1+2\ell\Delta\end{pmatrix}\begin{pmatrix} 1 & -1 \\ 1 & 1 \end{pmatrix},\nonumber
\EEA
one can deduce that,
\BEA
E_{x,T+1}^* &\leq& C\Delta \epsilon \Big(-(T+1) + \frac{1-(1+2\ell\Delta)^{T+1}}{-2\ell\Delta} \Big) \nonumber\\
&\leq &C\Delta \epsilon \left( -(T+1)+\left[ \frac{-1+\left( 1+2\ell \Delta
\left( T+1\right) +\frac{T\left( T+1\right) }{2}\left( 2\ell \Delta \right)
^{2}+O(\Delta ^{3}T^{3})\right) }{2\ell \Delta }\right] \right)  \nonumber\\
&=&C\Delta \epsilon \frac{T\left( T+1\right) }{2}2\ell \Delta +O(\epsilon
\Delta ^{3}T^{3})\nonumber \\
&=& \mathcal{O}(\epsilon\Delta^2 T^2).\nonumber
\EEA
where we use Taylor expansion over small $2\ell\Delta$ and the proof is complete.

\comment{
Recall that discrete Gronwall Lemma for explicit finite difference \cite{emmrich1999discrete}, that is, if
\BEA
a_{T+1} \leq (1+\lambda \Delta)a_T + \Delta g_T\nonumber
\EEA
where $g_T$ is non-decreasing, we have
\BEA
a_T \leq (1+\lambda \Delta)^{T} a_0 + \frac{g_{T-1}}{\lambda}((1+\lambda\Delta)^T-1)\nonumber
\EEA
In our case, we apply it to \eqref{Eystar} such that $\lambda=\ell_1$, $a_0=0$, and $g_t =  \ell_2 E^*_{x,t}  + C \epsilon \Delta^{-1}$ is monotonically increasing and obtain,
\BEA
E_{y,T}^* &\leq& \frac{1}{\ell_1} ( \ell_2 E^*_{x,T-1}  + C \epsilon\Delta^{-1}) ((1+\ell_1\Delta )^{T}-1)\nonumber\\
&=& (\ell_2 E^*_{x,T-1}  + C \epsilon \Delta^{-1}) (T\Delta + \mathcal{O}(T^2\Delta^2)) \nonumber\\
&=& T \Delta \ell_2 E^*_{x,T1}  + C \epsilon T + \mathcal{O}(T^2\Delta^2,\epsilon \Delta T^2). \nonumber
\EEA

Using similar algebra, we have
\BEA
E_{x,T+1}^* \leq (1+\Delta \ell_3) E^*_{x,T} + \Delta  \ell_4 E^*_{y,T}  \label{Exxx}
\EEA

Inserting this bound to \eqref{Exxx}, we obtain
\BEA
E_{x,T+1}^* &\leq& (1+\Delta \ell_3) E^*_{x,T} + \Delta  \ell_4 (T \Delta \ell_2 E^*_{x,T-1}  + C \epsilon T + \mathcal{O}(T^2\Delta^2,\epsilon \Delta T^2) ).\nonumber
\EEA

Now, setting $\alpha=\max\{ 1+\Delta \ell_3, \Delta^2  \ell_2\ell_4 T\}$, by induction,
\BEA
E_{x,T+1}^* &\leq&  C (\epsilon T\Delta+\mathcal{O}(T^2\Delta^3,\epsilon \Delta T^3)) (1+\alpha)^{T-1} = \mathcal{O}(\epsilon T\Delta (1+\alpha)^{T-1}).\nonumber
\EEA
Since the Lipschitz constants are order-one, we have for $T\sim\Delta^{-1}$, $\alpha=\mathcal{O}(1)$.

and the proof is complete.
}

\section{Proof of Theorem~3}\label{app:C}
In this case, we have
\BEA
\mathbb{E}[\Theta_{t+1}|\bm{z}_{t,m}] = \bar{\mathcal{G}}_0(x_t,\theta_t) + \sum_{k=1}^m \bar{\mathcal{G}}_k (x_{t-k},\theta_{t-k})+(QS)^{m+1}\pi(x_{t-m},y_{t-m}),
\EEA
where $\bm{z}_{t,m} = (\bm{x}_{t-m:t},\bm{\theta}_{t-m:t})$.

Define $E_{\theta,t+1}: = |\theta_{t+1} - \hat{\theta}_{t+1}|$ and $E_{x,t+1}: = |x_{t+1} - \hat{x}_{t+1}|$. By the Assumption~\ref{assumption2}, $\mathcal{F}$ and $\mathcal{G}_k$ are Lipschitz continuous on $x$ and $\theta$, and $(QS)^{m+1}$ is a bounded linear operator in uniform sense.
Thus, we have
\BEA
E_{\theta,t+1} &\leq& \left\vert\mathbb{E}[\Theta_{t+1}|\bm{z}_{t,m}]  -\mathbb{E}^\epsilon[\Theta_{t+1}|\bm{\hat{z}}_{t,m}]\right\vert  + |\xi_{t+1}| \nonumber\\
&\leq& \left\vert\mathbb{E}[\Theta_{t+1}|\bm{z}_{t,m}]  -\mathbb{E}[\Theta_{t+1}|\bm{\hat{z}}_{t,m}]\right\vert+ \left\vert\mathbb{E}[\Theta_{t+1}|\bm{\hat{z}}_{t,m}]  -\mathbb{E}^\epsilon[\Theta_{t+1}|\bm{\hat{z}}_{t,m}]\right\vert  +|\xi_{t+1}| \nonumber\\
&\leq& \sum_{k=0}^m \left\vert \mathcal{G}_k (x_{t-k},\theta_{t-k})-\mathcal{G}_k (\hat{x}_{t-k},\hat{\theta}_{t-k})\right\vert
+ \left\vert(QS)^{m+1}\pi(x_{t-m},y_{t-m})-(QS)^{m+1}\pi(\hat{x}_{t-m},\hat{y}_{t-m})\right\vert
\nonumber  \\ && +\left\vert\mathbb{E}[\Theta_{t+1}|\bm{\hat{z}}_{t,m}] - \mathbb{E}^\epsilon[\Theta_{t+1}|\bm{\hat{z}}_{t,m}]\right\vert   + |\xi_{t+1}|\nonumber\\
&\leq &\sum_{s=t-m}^t K_{s-(t-m)} E_{\theta,s} +   \sum_{s=t-m}^t L_{s-(t-m)} E_{x,s} + \left\vert\mathbb{E}[\Theta_{t+1}|\bm{\hat{z}}_{t,m}] - \mathbb{E}^\epsilon[\Theta_{t+1}|\bm{\hat{z}}_{t,m}]\right\vert  + |\xi_{t+1}|,\label{EBB}
\EEA
where $K_s, L_s$ are Lipschitz constants.

Define $E_{\theta,T+1}^*:=\mathbb{E}[\max_{t=\{0,\ldots,T+1\}}E_{\theta,t}]$ and $E_{x,T+1}^*:=\mathbb{E}[\max_{t=\{0,\ldots,T+1\}}E_{x,t}]$. Then we have,
\begin{align}
E^*_{\theta,T+1} &\leq\sum_{s=T-m}^T K_{s-(T-m)} E^*_{\theta,s} +   \sum_{s=T-m}^T  L_{s-(T-m)} E^*_{x,s} + C\epsilon,\label{thm1theta}
\end{align}
where the expectation of the last term in \eqref{EBB} is bounded using the Burkholder-Davis-Gundy inequality \cite{ps:00}. We should point out that since the expectation in
\BEA
\mathbb{E}\left[\left(\mathbb{E}[\theta_{t+1}|{Z}_{t,m}]-\mathbb{E}^\epsilon[\Theta_{t+1}|{Z}_{t,m}] \right)^2 \right] = \mathcal{O}(\epsilon^2)\nonumber,
\EEA
is defined with respect to the pushforward measure $\nu := Z_{t,m*}\mu$, that is, $\nu(B)= \mu(Z_{t,m}^{-1}(B))$, for all $B\in \mathcal{B}(\mathcal{Z})$ in the $\sigma$-algebra, associated to the random variable $Z_{t,m}:\mathcal{X}\times\mathcal{Y} \to\mathcal{Z}$. Since  $\nu(\mathcal{Z})=\int_{\mathcal{Z}}d\nu(z)<\infty$, it is clear that expectation of the third term in \eqref{EBB},
\BEA
\mathbb{E}\Big[\left\vert\mathbb{E}[\theta_{t+1}|{Z}_{t,m}]-\mathbb{E}^\epsilon[\Theta_{t+1}|{Z}_{t,m}]\right\vert \Big]\leq \mathbb{E}\Big[\left\vert\mathbb{E}[\theta_{t+1}|{Z}_{t,m}]-\mathbb{E}^\epsilon[\Theta_{t+1}|{Z}_{t,m}]\right\vert^2 \Big]^{1/2} \nu(\mathcal{Z})^{1/2} = C\epsilon.
\EEA
is also bounded by order-$\epsilon$.

Let $0<K := \max\{K_0,\ldots,K_m\}$, applying the bound in Lemma~\ref{lemma1}, we can obtain from \eqref{thm1theta}
\BEA
E^*_{\theta,T+1} &\leq& \Big(  \sum_{s=T-m}^T  L_{s-(T-m)} E^*_{x,s}  + C\epsilon\Big) (1+K)^T.\label{Etheta}
\EEA
Using similar algebra, we have
\BEA
E_{x,T+1}^*\leq L_{m+1} E^*_{x,T} + K_{m+1} E^*_{\theta,T},  \label{Ex3}
\EEA
for some constants $K_{m+1},L_{m+1}>0$.
Inserting \eqref{Etheta} into \eqref{Ex3}, let $0<L := \max_{j=0,\ldots,m}\{L_{m+1}, K_{m+1}L_j(1+K)^{T-1} \}$, applying the bound \eqref{gronwallbound}, we obtain
\BEA
E_{x,T+1}^* &\leq& L_{m+1} E^*_{x,T} + K_{m+1}\Big(  \sum_{s=T-m-1}^{T-1}  L_{s-(T-m-1)} E^*_{x,s}  + C\epsilon\Big)(1+K)^{T-1} \\
&\leq&  L  \sum_{s=T-m-1}^T  E^*_{x,s}  + K_{m+1}C\epsilon (1+K)^{T-1} \\
&\leq& K_{m+1}C\epsilon (1+K)^{T-1}(1+L)^T \\ \nonumber
&=& \mathcal{O}(a^T \epsilon),
\EEA
for some $a>1$ and the proof is completed.

\bibliographystyle{abbrv}
\bibliography{ref,refhz}

\end{document}